\newtheorem{theorem}{Theorem}[section]
\newtheorem{lemma}[theorem]{Lemma}
\newtheorem{proposition}[theorem]{Proposition}
\newcommand{\gr}{\operatorname{gr}}
\newcommand{\Sym}{\operatorname{Sym}}
\newcommand{\FM}{\operatorname{FM}}
\newcommand{\card}{\operatorname{card}}
\newenvironment{proof}{\par\noindent{\bf Proof.}}{$\qed$\par\bigskip}
\newcommand{\qed}{\enspace\vrule  height6pt  width4pt  depth2pt}
\begin{document}

\title{Finitely presented algebras defined by permutation relations of dihedral type}

\author{Ferran Ced\'o\footnote{Research partially supported by a grant of MICIIN (Spain)
MTM2011-28992-C02-01.} \and Eric Jespers\footnote{Research supported
in part by Onderzoeksraad of Vrije Universiteit Brussel and Fonds
voor Wetenschappelijk Onderzoek (Belgium).} \and Georg Klein}
\date{}
\maketitle

\begin{abstract}
The class of finitely presented algebras over a
field $K$ with a set of generators $a_{1},\ldots ,
a_{n}$ and defined by homogeneous relations of the
form
 $a_{1}a_{2}\cdots a_{n} =a_{\sigma (1)} a_{\sigma (2)} \cdots
 a_{\sigma (n)}$,
where $\sigma$ runs through a subset $H$ of the symmetric group
$\Sym_{n}$ of degree $n$, is investigated. Groups $H$ in which the
cyclic group $\langle (1,2, \ldots ,n) \rangle$ is a normal subgroup
of index $2$ are considered. Certain representations by permutations
of the dihedral and semidihedral groups belong to this class of
groups. A normal form for the elements of the underlying monoid
$S_n(H)$ with the same presentation as the algebra is obtained.
Properties of the algebra are derived, it follows that it is  an
automaton algebra in the sense of Ufnarovski\u{\i}. The universal
group $G_n$ of $S_n(H)$ is a unique product group, and it is the
central localization of a cancellative subsemigroup of
$S_n(H)$. This, together with previously obtained results on such
semigroups and algebras, is used to show that the algebra
$K[S_n(H)]$ is semiprimitive.
\end{abstract}

\noindent {\it Keywords:} semigroup ring,
finitely presented, symmetric presentation, semigroup algebra, automaton algebra, regular language, primitive, Jacobson radical,
semiprimitive, monoid, group \\
{\it Mathematics
Subject Classification:}
Primary: 16S15, 16S36,
20M05; Secondary:  20M25, 20M35, 16N20.

\section{Introduction}\label{intro}

We consider subgroups $H$ of $Sym_n$, the symmetric group of
order $n$, in which the cyclic subgroup $\langle (1, 2, \ldots ,n
)\rangle $ is normal and of index $2$. This class of groups includes
certain representations by permutations of the dihedral and
semi-dihedral groups. We study the properties of the monoid
with the following presentation:
$$S_n(H)= \langle a_1,a_2,\dots ,a_n\mid a_1a_2\cdots
a_n= a_{\sigma (1)}a_{\sigma (2)}\cdots a_{\sigma (n)},\; \sigma\in
H\rangle.$$
In Section~\ref{monoidd}, we establish a normal form for the elements of this semigroup.
The proof is an application of the diamond lemma and requires many combinatorial verifications.
Using arguments by analogy, the number of actual cases to verify were kept to a minimum.

In Section~\ref{alg1},  for a field $K$, we consider the algebra $K[S_n(H)]$ with the same presentation
as the semigroup. From the normal form it follows that
$K[S_n(H)]$ is an automaton algebra in the sense of Ufnarovski\u{\i} \cite{ufnar}.
The universal group $G_n$ of $S_n(H)$, by which we mean the group with the same presentation, is
a unique product group. The element $z=a_1a_2 \cdots a_n$ is central in $S_n(H)$, and
$G_n=zS_n(H) \langle z  \rangle ^{-1}$, with $zS_n(H)$ a cancellative subsemigroup of $S_n(H)$.
This is used to prove that $\mathcal{J}(K[S_n(H)])=0$,
also using the fact that  $\mathcal{J}(K[S_n(H)]) \subseteq K[zS_n(H)]$ from the reference literature.
We conclude Section~\ref{alg1} with some results regarding
prime ideals in the semigroup and in the algebra, which are related to further structural properties of the algebra.

Previous investigations of algebras of this type include
the cases of $H$ the full symmetric group \cite{alghomshort}, the alternating
group of degree $n$ \cite{alt4algebra,altalgebra}, and an abelian
group \cite{symcyclic}. In many cases the algebra turns out to be semiprimitive, with as noteworthy exception
the alternating group, where the Jacobson radical being zero or not depends on the parity of $n$ and the characteristic of the field $K$.
The subgroups $H$ considered in the present paper are of dihedral and quasi-dihedral type. In case the order of such groups
is $2^m$, there is only one other isomorphism class of groups which have the same order and which are also of nilpotency class $m-1$,
the generalized quaternion group $Q$. 
In \cite{quater}, we investigate $S_n(Q)$ and show that in this case more results can be obtained. Using different methods, it is shown that 
the semigroup $S_n(Q)$ is a two unique product semigroup if $Q$ is given via a regular representation by permutations.

\section{Monoids $S_n(H)$ defined by dihedral type relations}\label{monoidd}

In this section we study $S_{n}(H)$ in case $H$ contains the cyclic
group $\langle (1,2, \ldots , n) \rangle$ as a normal subgroup of
index $2$. Note that $n\geq 3$ and if $n=3$, then $H=\Sym_3$
and $S_3(H)=S_3(\Sym_3)$ which is studied in \cite{alghomshort}.
Hence, throughout we assume that $n>3$. So $H$ is generated by $\lambda =
(1,2, \ldots ,n)$ and $\mu $, with $\mu \notin \langle \lambda
\rangle$, and $\mu^2 \in \langle \lambda \rangle $. Since
$\langle \lambda \rangle $ is a normal subgroup of $H$, we have $\mu
\lambda = \lambda^k \mu$, for some integer $k$ such that $1\leq
k<n$ and $k^2 \equiv 1 \mod n$ (so $(k,n)=1$).
For ease of notation, arithmetic will be done modulo $n$ in
the set of indices $\{1,2, \ldots ,n \}$. We have
\begin{eqnarray*}
&&\mu(i+1)=\mu\lambda(i)=\lambda^k\mu(i)=\mu(i)+k,
\end{eqnarray*}
for all $i\in\{1,2, \ldots ,n \}$. So $k\neq 1$ as $\mu\notin \langle
\lambda\rangle$.
Thus, we obtain the following presentation
 \begin{equation*}
\begin{array}{r@{}l}
S_{n}(H)=  \langle a_1 ,a_2 ,\ldots ,a_n  \mid    a_1 a_2 \cdots a_{n-1}a_n
 &{} = a_i a_{i+1} a_{i+2} \cdots a_{i-2} a_{i-1} \vspace{6 pt} \\
 &{} =a_i a_{i+k} a_{i+2 k} \cdots  a_{i-2 k} a_{i- k} , \ 1 \leq i \leq n \rangle.
\end{array}
\end{equation*}
For simplicity, we denote $S_n(H)$ by $S_n$. It is clear that $S_n$ has only trivial units. The aim of this section is to obtain a normal
form for the elements of $S_{n}$. This requires tedious work. We begin by noting that
$$z=a_1a_2\cdots a_n$$ is a central element of $S_n$. Indeed, for all
$1\leq i\leq n$,
$$(a_1a_2\cdots a_n)a_i=a_ia_{i+1}\cdots
a_{i-2}a_{i-1}a_i=a_i(a_1a_2\cdots a_n).\label{centra}$$ For every integer
$p$, we define the following elements of length $n-2$ of $S_n$:
\begin{equation*}
\begin{array}{r@{}l}
b_{p}=&{} \ a_{p+1}a_{p+2}\cdots a_{p-3}a_{p-2}, \vspace{3pt} \\
c_{p}=&{} \ a_{p+k}a_{p+2k}\cdots a_{p-3k}a_{p-2k}.
\end{array}
\end{equation*}

\begin{lemma}\label{z2} For every non-negative integer $q$,
$$zb_{i}b_{i-(k+1)}\cdots b_{i-q(k+1)}a_{i-1-q(k+1)}=za_{i+k}c_{i+k}c_{i+k-(k+1)}\cdots c_{i+k-q(k+1)}$$ and
$$zc_{i}c_{i-(k+1)}\cdots c_{i-q(k+1)}a_{i-k-q(k+1)}=za_{i+1}b_{i+1}b_{i+1-(k+1)}\cdots b_{i+1-q(k+1)},$$
for $1 \leq i \leq n$.
\end{lemma}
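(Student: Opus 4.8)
The plan is to prove both displayed identities by induction on $q$, the whole argument resting on two families of ``basic'' relations together with the centrality of $z$. Substituting the definitions of $b_j$ and $c_j$ into the defining relations of $S_n$ gives, for every index $j$,
$$a_j b_j a_{j-1}=z=a_j c_j a_{j-k}.$$
I would establish the first (``$b$-side'') identity in full and then deduce the second from a symmetry, so that essentially one induction does all the work.

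For the base case $q=0$ of the first identity I would start from the right-hand side, push the central factor $z$ to the far right, expand that $z$ by one basic relation, and recognise a second basic relation forming at the junction:
$$z\,a_{i+k}c_{i+k}=a_{i+k}c_{i+k}\,z=a_{i+k}c_{i+k}\,(a_ib_ia_{i-1})=(a_{i+k}c_{i+k}a_i)\,b_ia_{i-1}=z\,b_ia_{i-1},$$
the last step using $a_{i+k}c_{i+k}a_i=z$, i.e. the relation $a_jc_ja_{j-k}=z$ at $j=i+k$. This collapse of an overlap of two defining relations back to $z$ is exactly the phenomenon underlying the diamond-lemma analysis, and the inductive step is the same maneuver carried out in the presence of an extra, untouched block of factors.

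For the inductive step, the arithmetic simplification that makes everything fit is
$$i+k-(q+1)(k+1)=i-1-q(k+1),$$
which I write as $t$. Thus the extra factor appearing on the right-hand side of the $(q+1)$-instance is $c_t$, and after applying the induction hypothesis the right-hand side becomes $z\,B_q\,a_tc_t$, where $B_q=b_i b_{i-(k+1)}\cdots b_{i-q(k+1)}$. I would then move the leading $z$ past the unchanged block $B_q$ and operate on the tail:
$$a_tc_t\,z=a_tc_t\,(a_{t-k}b_{t-k}a_{t-k-1})=(a_tc_ta_{t-k})\,b_{t-k}a_{t-k-1}=z\,b_{t-k}a_{t-k-1},$$
using $z=a_{t-k}b_{t-k}a_{t-k-1}$ and $a_tc_ta_{t-k}=z$. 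Returning $z$ to the front gives $z\,B_q\,b_{t-k}a_{t-k-1}$, which, via the companion identities $i-(q+1)(k+1)=t-k$ and $i-1-(q+1)(k+1)=t-k-1$, is precisely the left-hand side of the $(q+1)$-instance.

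The second identity I would obtain from the permutation of indices $\phi\colon j\mapsto kj \pmod n$. Since $(k,n)=1$ and $k^2\equiv 1\pmod n$, $\phi$ is a bijection that carries the consecutive defining relations to the jump-by-$k$ relations and back, hence extends to an automorphism of $S_n$ fixing $z$; moreover $\phi(b_p)=c_{kp}$ and $\phi(c_p)=b_{kp}$, and $\phi$ scales the step $k+1$ to $k(k+1)\equiv k+1$, so index patterns are preserved. Applying $\phi$ to the first identity at index $i$ yields exactly the second identity at index $ki$, and as $i$ ranges over all residues so does $ki$. The main obstacle I anticipate is not conceptual but bookkeeping: because $S_n$ is only a monoid one may never cancel, so every reduction must route the central element $z$ through the word and reconstitute a genuine defining relation at the correct position, and the $(\bmod\ n)$ index arithmetic must be kept consistent at each stage.
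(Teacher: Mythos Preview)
Your proof is correct and follows essentially the same approach as the paper: induction on $q$, with the base case and inductive step both reducing to moving the central $z$ through the word and collapsing a pair of overlapping defining relations $a_jb_ja_{j-1}=z=a_jc_ja_{j-k}$. The one genuine addition is your symmetry argument for the second identity: the paper simply says ``similarly'' and repeats the computation, whereas you observe that the index map $\phi\colon j\mapsto kj$ (an involution since $k^2\equiv 1$) extends to an automorphism of $S_n$ fixing $z$ and interchanging the $b$- and $c$-words, so the second identity is the $\phi$-image of the first---a tidy shortcut that also explains the parallel structure used repeatedly later in the paper.
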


\begin{proof}
We shall prove this by induction on $q$. For $q=0$, the statement is proved as follows.
\begin{equation*}
\begin{array}{r@{}l}
zb_{i}a_{i-1}&{}= za_{i+1}a_{i+2}\cdots a_{i-2}a_{i-1} \vspace{3 pt}   \\
&{}=(a_{i+k} a_{i+2k} \cdots a_{i-2k}a_{i-k}a_i)a_{i+1}a_{i+2}\cdots a_{i-2} a_{i-1} \vspace{3 pt} \\
&{}=a_{i+k} a_{i+2k} \cdots a_{i-2k}a_{i-k}z \vspace{3 pt} \\
&{}=za_{i+k}a_{i+2k}\cdots a_{i-2k} a_{i-k} \vspace{3 pt} \\
&{}=za_{i+k}c_{i+k}.
\end{array}
\end{equation*}
Similarly, one obtains that $zc_{i}a_{i-k}=za_{i+1}b_{i+1}$. Suppose
that $q\geq 1$ and the result is true for $q-1$. Since $z$ is
central, we have that
\begin{equation*}
\begin{array}{r@{}l}
&{}\hspace{-18 pt}  zb_{i}b_{i-(k+1)}\cdots b_{i-q(k+1)}a_{i-1-q(k+1)}  \vspace{3 pt} \\
&{}= b_{i}b_{i-(k+1)}\cdots b_{i-(q-1)(k+1)}zb_{i-q(k+1)}a_{i-1-q(k+1)} \vspace{3 pt}  \\
&{}= b_{i}b_{i-(k+1)}\cdots b_{i-(q-1)(k+1)}za_{i+k-q(k+1)}c_{i+k-q(k+1)} \vspace{3 pt} \\
&{}= zb_{i}b_{i-(k+1)}\cdots b_{i-(q-1)(k+1)}a_{i-1-(q-1)(k+1)}c_{i+k-q(k+1)} \vspace{3 pt} \\
&{}= za_{i+k}c_{i+k}c_{i+k-(k+1)}\cdots c_{i+k-(q-1)(k+1)}c_{i+k-q(k+1)}.
\end{array}
\end{equation*}
The other equality is proved similarly.
\end{proof}

Let $\FM_n$ denote the free monoid  on $\{ a_1,a_2,\dots ,a_n\}$ (we use
the same notation for the generators of $\FM_{n}$ and those of
$S_{n}$). Denote by $\ll$ the length-lexicographical order on $\FM_n$
generated by $$a_1\ll a_2\ll\dots\ll a_n.$$ Consider the following transformations of words in the
free monoid $\FM_n$ on $\{ a_1,a_2,\dots ,a_n\}$, that are dependent on the parameters $i$ (or $j$), $v$ and $q$,
with $1\leq i \leq n$, $v\in \FM_n$ and $q\geq 0$. Each transformation transforms a word into another
word which is smaller in the length-lexicographical order, and this is the reason why some restrictions
on $i$ (or $j$) are sometimes included.
The transformations are:
\begin{eqnarray}\label{ti}
t_i(a_{i} a_{i+1} \cdots a_{i-2} a_{i-1})=a_1a_2\cdots
a_n,
\end{eqnarray}
for $i=2,3, \ldots, n$;
\begin{eqnarray}\label{rjm}
r_{j,m}(a_{j}a_1^{m}a_2\cdots a_n)=a_1a_2\cdots a_na_ja_1^{m-1},
\end{eqnarray}
for $j=2,3,\dots ,n$ and $m\geq 1$;
\begin{eqnarray}\label{hi}
h_i(a_i a_{i+k} a_{i+2k} \cdots  a_{i-2 k} a_{i- k})=a_1a_2\cdots
a_n;
\end{eqnarray}
\begin{equation}\label{di}
\begin{array}{r@{}l}
&{} \hspace{-18 pt} d_{i,v,q}\left(a_1 \cdots a_n v  b_{i}b_{i-(k+1)} \cdots b_{i-q(k+1)} a_{i-1-q(k+1)}\right) \vspace{3 pt} \\
&{} =a_1 \cdots a_n v  a_{i+k}c_{i+k}c_{i+k-(k+1)} \cdots c_{i+k-q(k+1)},
\end{array}
\end{equation}
for $n-k+1 \leq i \leq n-1 $;
\begin{equation}\label{ei}
\begin{array}{r@{}l}
&{} \hspace{-18 pt}
e_{i,v,q}\left(a_1 \cdots a_n v  c_{i}c_{i-(k+1)} \cdots
c_{i-q(k+1)} a_{i-k-q(k+1)}\right) \vspace{3 pt}   \\
&{} =a_1 \cdots a_n v  a_{i+1}b_{i+1}b_{i+1-(k+1)}\cdots b_{i+1-q(k+1)},
\end{array}
\end{equation}
for $  0 \leq i \leq n-k $;
\begin{equation}\label{si}
\begin{array}{r@{}l}
&{} \hspace{-18 pt}
 s_{i,v,q}\left(a_1 \cdots a_n v a_i b_{i}b_{i-(k+1)} \cdots b_{i-q(k+1)} a_{i-1-q(k+1)}\right) \vspace{3 pt}   \\
&{}  =a_1^2 (a_2 \cdots a_n)^2 v   c_{i+k-(k+1)}c_{i+k-2(k+1)} \cdots c_{i+k-q(k+1)};
\end{array}
\end{equation}
\begin{equation}\label{ui}
\begin{array}{r@{}l}
&{}  \hspace{-18 pt}
 u_{i,v,q}\left(a_1 \cdots a_n v a_i c_{i}c_{i-(k+1)}\cdots
c_{i-q(k+1)} a_{i-k-q(k+1)}\right) \vspace{3 pt}   \\
&{}  =a_1^2(a_2 \cdots a_n)^2 v  b_{i+1-(k+1)}b_{i+1-2(k+1)} \cdots b_{i+1-q(k+1)}.
\end{array}
\end{equation}

It follows at once from the presentation\index{presentation} of $S_n$, the centrality\index{centrality} of $z=a_1 \cdots a_n$, and Lemma~\ref{z2}, that each of these
transformations maps a word in $\FM_{n}$ to another
word that represents the same element in $S_{n}$.

Let $w_1,w_2\in \FM_n$. We say that $w_1$ covers\index{covers} $w_2$ if $w_2$ is
obtained from $w_1$ by applying exactly one of the transformations\index{transformation}
$t_i,r_{j,m},h_i,d_{i,v,q},e_{i,v,q}, s_{i,v,q}, u_{i,v,q}$
to a subword of $w_1$. In this case we write $w_1\succ \label{coverr} w_2$. We
define a partial order in $\FM_n$ by $w\geq w'$ if and only if there
exist $w_0,w_1,\dots ,w_r\in \FM_n$, ($r\geq 0$), such that
$$w=w_0\succ w_1\succ\ldots\succ w_r=w'.$$
Note that for $w,w'\in
\FM_n$, $$w\geq w'\Rightarrow w\gg w'.$$

\begin{lemma}\label{joinn}
For an integer $q \geq 0$ and any word $v \in \FM_n $, let
\begin{equation*}
\left\{
\begin{array}{r@{}l}
w_1&{}= a_1 a_2 \cdots a_n v c_{i} c_{i-(k+1)} \cdots c_{i-q(k+1)} a_{i-k-q(k+1)} \in \FM_n \vspace{3 pt} \\
w_2&{}= a_1 a_2 \cdots a_n v a_{i+1}b_{i+1}b_{i+1-(k+1)} \cdots b_{i+1-q(k+1)} \in \FM_n \vspace{3 pt},
\end{array}
\right.
\end{equation*}
or
\begin{equation*}
\left\{
\begin{array}{r@{}l}
w_1&{}= a_1 a_2 \cdots a_n v b_{i} b_{i-(k+1)} \cdots b_{i-q(k+1)} a_{i-1-q(k+1)} \in \FM_n  \vspace{3 pt} \\
w_2&{}= a_1 a_2 \cdots a_n v a_{i+k}c_{i+k} c_{i+k-(k+1)} \cdots
c_{i+k-q(k+1)} \in \FM_n.
\end{array}
\right.
\end{equation*}
Then there exists $w_3 \in \FM_n$, such that $w_1, w_2 \geq w_3$, and $w_3$ begins with $a_1a_2 \cdots a_n$.
\end{lemma}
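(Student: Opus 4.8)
The plan is to argue by induction on $q$, handling the two displayed alternatives symmetrically; I describe the first (where $w_1$ is built from the $c_p$'s), the second following by interchanging the $b_p$'s with the $c_p$'s, $d$ with $e$, and the two index ranges. The point of departure is that $w_1$ and $w_2$ represent the same element of $S_n$: this is Lemma~\ref{z2}, after pulling the central factor $z=a_1\cdots a_n$ to the front and absorbing $v$ by centrality. Thus a common reduct ought to exist, and the task is to realise it inside the rewriting system. All index bookkeeping will rest on two identities in $\FM_n$,
$$a_j b_j = b_{j-1}a_{j-2}, \qquad a_j c_j = c_{j-k}a_{j-2k},$$
each holding because the two sides spell the same word ($a_j a_{j+1}\cdots a_{j-2}$ and $a_j a_{j+k}\cdots a_{j-2k}$ respectively). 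They let me re-bracket the head $a_{i+1}b_{i+1}$ of $w_2$ as $b_i a_{i-1}$, and recognise an output $a_{i+k}c_{i+k}$ as $c_i a_{i-k}$.

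First I would treat the range $0\le i\le n-k$. There $e_{i,v,q}$ applies to $w_1$ verbatim and returns $w_2$, so I take $w_3=w_2$, which begins with $a_1\cdots a_n$. The restriction on $i$ is exactly what makes this one transformation length-lexicographically decreasing, hence a genuine cover.

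The real case is the complementary range $n-k+1\le i\le n-1$. For $q=0$ I re-bracket $w_2=zv\,a_{i+1}b_{i+1}=zv\,b_i a_{i-1}$, apply $d_{i,v,0}$ (legitimate since now $i$ lies in the range demanded by $d$) to get $zv\,a_{i+k}c_{i+k}$, and re-bracket once more as $zv\,c_i a_{i-k}$, which is literally $w_1$; hence $w_2\succ w_1$ and $w_3=w_1$. For $q\ge1$ the same single step is the inductive engine: writing $j=i-k-1$, I re-bracket the head of $w_2$, apply $d_{i,v,0}$ to the prefix $zv\,b_i a_{i-1}$, and re-bracket $a_{i+k}c_{i+k}=c_i a_{i-k}$; using $i+1-(k+1)=i-k=j+1$ the result is
$$z(vc_i)\,a_{j+1}b_{j+1}b_{j+1-(k+1)}\cdots b_{j+1-(q-1)(k+1)},$$
which is exactly the word $w_2$ of the lemma for the smaller data $(\,j,\ q-1,\ vc_i\,)$. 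Simultaneously $w_1$, read as $z(vc_i)\,c_{i-(k+1)}\cdots$, is literally the word $w_1$ for the same smaller data, since $i-(k+1)=j$ and the successive shifts by $k+1$ telescope correctly down to the trailing letter. The induction hypothesis now yields a common reduct $w_3$ beginning with $a_1\cdots a_n$, and as $w_1$ equals the smaller $w_1$ while $w_2$ covers the smaller $w_2$, we conclude $w_1,w_2\ge w_3$.

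I expect the main obstacle to be precisely this index arithmetic modulo $n$: one must verify, block by block, that after the re-bracketing and the single transformation the tail of $w_2$ coincides with the reduced instance's $w_2$, and that $w_1$'s tail coincides with the reduced instance's $w_1$. This uses $i-(k+1)=i-k-1$, the telescoping of the shifts by $k+1$ and by $k$, and $\gcd(k,n)=1$ (so that the step-$k$ strings $c_p$ are honest blocks on all residues). Once these identifications are checked the induction closes with no further ingredients; in particular the transformations $s_{i,v,q}$ and $u_{i,v,q}$ play no role here, being reserved for the ambiguities in which an extra generator $a_i$ precedes the pattern.
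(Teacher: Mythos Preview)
Your proof is correct and is essentially the paper's argument repackaged as an induction on $q$: the paper instead locates explicitly the minimal $p\in\{0,\dots,q\}$ at which $e$ becomes applicable to the suffix of $w_1$ starting at $c_{i-p(k+1)}$, applies $e$ there to produce $w_3$, and reaches the same $w_3$ from $w_2$ by the chain $d_{i,v,0},d_{i-(k+1),vc_i,0},\dots$ (with the ``no such $p$'' case giving $w_3=w_1$). Unwinding your induction step by step yields exactly this sequence of $d$'s followed by one $e$, so the two arguments coincide.
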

\begin{proof}
We will prove the first case, the second case can be proved similarly. Recall that
$$c_{i-p(k+1)}=a_{i+k-p(k+1)}a_{i+2k-p(k+1)}\cdots a_{i-3k-p(k+1)}a_{i-2k-p(k+1)}.$$
First, assume there exists an integer $p$ such that $0\leq p \leq q$ and $i+k-p(k+1) \equiv l \mod n$ for some
$0\leq l \leq n-k$. Let $p$ be the smallest such number. Then
\begin{equation*}
\hspace{-5 pt}
\begin{array}{r@{}l}
&{} e_{i+k-p(k+1), v c_{i} \cdots c_{i-(p-1)(k+1)},q-p}( w_1)  \vspace{3 pt}   \\
&{}=e_{i+k-p(k+1), v c_{i} \cdots c_{i-(p-1)(k+1)},q-p}(a_1  \cdots a_n v c_{i} \cdots c_{i-(p-1)(k+1)} c_{i-p(k+1)} \cdots a_{i-k-q(k+1)}) \vspace{3 pt}  \\
&{}= a_1 a_2 \cdots a_n v c_{i} \cdots c_{i-(p-1)(k+1)}
a_{i+1-p(k+1)} b_{i+1-p(k+1)} \cdots b_{i+1-q(k+1)}  \vspace{3 pt}   \\
&{}= w_3.
\end{array}
\end{equation*}
So $w_1 \geq w_3$. In case $p=0$, clearly $w_3 = w_2$ and thus $w_2 \geq w_3$.
So the result follows.
In case $p\neq 0$, by the minimality condition, for all
$0\leq p' <p$ we have $i+k-p'(k+1) \equiv l \mod n$ for some
$n-k+1 \leq l \leq n-1 $.
Since $b_ja_{j-1}=a_{j+1}b_{j+1}$ and
$c_{j}a_{j+1-(k+1)}=a_{j+k}c_{j+k}$, we can successively apply the
transformations $d_{i,v,0}$, $d_{i-(k+1),vc_{i},0}$,
$d_{i-2(k+1),vc_{i}c_{i-(k+1)},0},\ldots ,
d_{i-(p-1)(k+1),vc_{i}c_{i-(k+1)} \cdots c_{i-(p-2)(k+1)} ,0}$, obtaining
\begin{equation}\label{successiv}
\begin{array}{r@{}l}
&{}d_{i-(p-1)(k+1),vc_{i}c_{i-(k+1)} \cdots c_{i-(p-2)(k+1)} ,0} \cdots d_{i-2(k+1), vc_{i}c_{i-(k+1)},0} d_{i-(k+1),vc_{i},0} d_{i,v,0} (w_2) \vspace{3 pt} \\
&{}=d_{i-(p-1)(k+1),vc_{i}c_{i-(k+1)} \cdots c_{i-(p-2)(k+1)} ,0} \cdots d_{i-2(k+1), vc_{i}c_{i-(k+1)},0} d_{i-(k+1),vc_{i},0} d_{i,v,0}  \vspace{3 pt} \\
&{}\hspace{17 pt} (a_1 a_2 \cdots a_n v a_{i+1}b_{i+1}b_{i+1-(k+1)} \cdots b_{i+1-q(k+1)}) \vspace{3 pt}  \\
&{}=d_{i-(p-1)(k+1),vc_{i}c_{i-(k+1)} \cdots c_{i-(p-2)(k+1)} ,0} \cdots d_{i-2(k+1), vc_{i}c_{i-(k+1)},0} d_{i-(k+1),vc_{i},0} d_{i,v,0} \vspace{3 pt}  \\
&{}\hspace{17 pt} (a_1 a_2 \cdots a_n v b_{i}a_{i-1}b_{i+1-(k+1)} \cdots b_{i+1-q(k+1)}) \vspace{3 pt}  \\
&{}=d_{i-(p-1)(k+1),vc_{i}c_{i-(k+1)} \cdots c_{i-(p-2)(k+1)} ,0} \cdots d_{i-2(k+1), vc_{i}c_{i-(k+1)},0} d_{i-(k+1),vc_{i},0}  \vspace{3 pt}  \\
&{}\hspace{17 pt} (a_1 a_2 \cdots a_n v a_{i+k}c_{i+k}b_{i+1-(k+1)} \cdots b_{i+1-q(k+1)})  \vspace{3 pt} \\
&{}=d_{i-(p-1)(k+1),vc_{i}c_{i-(k+1)} \cdots c_{i-(p-2)(k+1)} ,0} \cdots d_{i-2(k+1), vc_{i}c_{i-(k+1)},0} d_{i-(k+1),vc_{i},0}  \vspace{3 pt}  \\
&{}\hspace{17 pt} (a_1 a_2 \cdots a_n v c_{i}a_{i-k}b_{i+1-(k+1)} \cdots b_{i+1-q(k+1)})  \vspace{3 pt} \\
&{}=d_{i-(p-1)(k+1),vc_{i}c_{i-(k+1)} \cdots c_{i-(p-2)(k+1)} ,0} \cdots d_{i-2(k+1), vc_{i}c_{i-(k+1)},0} d_{i-(k+1),vc_{i},0}   \vspace{3 pt} \\
&{}\hspace{17 pt} (a_1 a_2 \cdots a_n v c_{i}a_{i+1-(k+1)}b_{i+1-(k+1)} \cdots b_{i+1-q(k+1)}) \vspace{3 pt}  \\
&{} \hspace{6 pt} \vdots \vspace{3 pt}  \\
&{}= a_1\cdots a_n v  c_{i}\cdots c_{i-(p-2)(k+1)} a_{i+k-(p-1)(k+1)}  c_{i+k-(p-1)(k+1)}   b_{i+1-p(k+1)} \cdots b_{i+1-q(k+1)} \vspace{3pt} \\
&{}= a_1\cdots a_n v  c_{i}\cdots c_{i-(p-2)(k+1)} c_{i-(p-1)(k+1)}a_{i-k-(p-1)(k+1)}  b_{i+1-p(k+1)} \cdots b_{i+1-q(k+1)} \vspace{3pt} \\
&{}= a_1\cdots a_n v  c_{i}\cdots c_{i-(p-2)(k+1)}
c_{i-(p-1)(k+1)}a_{i+1-p(k+1)} b_{i+1-p(k+1)} \cdots
b_{i+1-q(k+1)} \vspace{3pt} \\
&{} =w_3.
\end{array}
\end{equation}
So $w_2 \geq w_3$ and we have proved the statement.

Second, assume no $p$ as above exists. One can now apply the calculation in (\ref{successiv}) for the number $p=q+1$ and this
shows that $w_2 \geq w_3$. It it is easily seen that in this case $w_3= w_1$ and thus the result is proved.
\end{proof}

\begin{theorem}\label{normalform}
Each element $a\in S_n$ can be uniquely written as a product
\begin{eqnarray}\label{nf}
a=a_1^{i}(a_2\cdots a_n)^jb,
\end{eqnarray}
where   $b\in S_n\setminus (a_1S_n\cup a_2\cdots a_nS_n )$, $i,j$
are non-negative integers such that if $i,j>0$ then
$$\begin{array}{r@{}l}
b \notin  \bigcup_{q \geq 0}  \Big(
&{} \bigcup_{i=n-k+1}^{n-1 }  S_n  b_{i}b_{i-(k+1)} \cdots b_{i-q(k+1)} a_{i-1-q(k+1)}  S_n \vspace{3pt} \\
&{}\cup \bigcup_{i=0}^{n-k } S_n     c_{i}c_{i-(k+1)}
\cdots c_{i-q(k+1)} a_{i-k-q(k+1)}  S_n  \vspace{3pt} \\
&{}\cup \bigcup_{i=1}^n \big(  S_n   a_i
b_{i}b_{i-(k+1)} \cdots b_{i-q(k+1)} a_{i-1-q(k+1)}      \vspace{3pt} \\
&{} \hspace{42 pt} \cup  S_n  a_i c_{i}c_{i-(k+1)}\cdots c_{i-q(k+1)} a_{i-k-q(k+1)} S_n \big)
\Big)
\end{array}$$
\end{theorem}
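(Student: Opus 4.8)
The plan is to prove the theorem via the Diamond Lemma applied to the reduction system on $\FM_n$ given by the seven families of transformations $t_i, r_{j,m}, h_i, d_{i,v,q}, e_{i,v,q}, s_{i,v,q}, u_{i,v,q}$: once this system is shown to be terminating and confluent, every element of $S_n$ has a unique irreducible representative, and it then remains to identify the irreducible words with those of the form (\ref{nf}). Before anything else I would record that the congruence on $\FM_n$ generated by the covering relation $\succ$ coincides with the defining congruence of $S_n$. One inclusion is the remark already made after the list of transformations, namely that each move sends a word to an $S_n$-equivalent word (this rests on the presentation, the centrality of $z$, and Lemma~\ref{z2}). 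The reverse inclusion holds because the defining relations $a_1 \cdots a_n = a_i a_{i+1} \cdots a_{i-1}$ and $a_1 \cdots a_n = a_i a_{i+k} \cdots a_{i-k}$ are exactly the equalities underlying $t_i$ and $h_i$. Hence two words represent the same element of $S_n$ if and only if they are joined by a zigzag of coverings, and the irreducible words will form a transversal for $S_n$.

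Termination is immediate: each transformation strictly lowers a word in the length-lexicographic order $\ll$, and $\ll$ is a well-order on $\FM_n$ since the alphabet is finite; thus no infinite chain $w_0 \succ w_1 \succ \cdots$ exists and every word reduces in finitely many steps to an irreducible word. This already gives \emph{existence} of a representation of the required shape, once irreducibles are characterized. For that characterization I would inspect the left-hand side of each family and show that irreducibility under all seven is equivalent to the word having the form $a_1^i(a_2\cdots a_n)^j b$ of (\ref{nf}). Inapplicability of $t_i$ and $h_i$ forces $b \notin a_1 S_n \cup a_2\cdots a_n S_n$ and fixes the leading block, the rule $r_{j,m}$ (which uses centrality of $z$ to pull a factor $z = a_1a_2\cdots a_n$ to the front) being what guarantees the powers of $a_1$ and of $a_2\cdots a_n$ are fully gathered there; inapplicability of $d, e$ and of $s, u$ translates precisely into the displayed union-avoidance conditions on $b$, with $s_{i,v,q}$ and $u_{i,v,q}$ (whose outputs carry the prefix $a_1^2(a_2\cdots a_n)^2$) accounting for exactly the extra constraints imposed only when $i,j>0$.

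The heart of the argument, and the main obstacle, is confluence. By Newman's Lemma it suffices to prove local confluence: whenever two transformations apply to a word $w$, producing $w'$ and $w''$, there must be a common descendant $w'''$ with $w',w'' \geq w'''$. I would organize this through the standard Diamond-Lemma bookkeeping of overlap and inclusion ambiguities among the left-hand sides of the seven families, exploiting the symmetry between the $b$-type rules ($t,d,s$) and the $c$-type rules ($h,e,u$) to roughly halve the verifications, which is what the introduction means by ``arguments by analogy''. The genuinely delicate ambiguities are those in which a $d$- or $e$-pattern overlaps another such pattern or an $s/u$-pattern; these are precisely the situations resolved by Lemma~\ref{joinn}, which supplies, for each pair $w_1,w_2$ coming from the two sides of Lemma~\ref{z2}, a common descendant beginning with $a_1a_2\cdots a_n$. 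There the index arithmetic modulo $n$---the case split on whether some $i+k-p(k+1)$ lands in the range $0 \le l \le n-k$---is exactly the mechanism that makes the competing chains of $d$- and $e$-moves meet. The remaining overlaps, involving $t_i, h_i, r_{j,m}$, are comparatively routine and can be settled by direct reduction using the centrality of $z$.

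Finally, local confluence together with termination yields global confluence by Newman's Lemma, so each congruence class---equivalently each element of $S_n$---contains a unique irreducible word. Combining this uniqueness with the characterization of the irreducibles established in the second step gives both existence and uniqueness of the representation (\ref{nf}), which completes the proof. I expect essentially all of the difficulty to be concentrated in the exhaustive overlap analysis of the third step, and the careful packaging of the symmetric cases (so that Lemma~\ref{joinn} can be invoked uniformly) is what keeps that analysis finite and manageable.
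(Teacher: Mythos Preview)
Your proposal is correct and follows essentially the same route as the paper: the paper applies the Diamond Lemma to exactly the reduction system $\{t_i, r_{j,m}, h_i, d_{i,v,q}, e_{i,v,q}, s_{i,v,q}, u_{i,v,q}\}$, observes termination via the length-lexicographic order, and then carries out the overlap analysis case by case (twenty-five explicit cases, with the $b$/$c$ symmetry used to bracket analogous ones), invoking Lemma~\ref{joinn} precisely where you anticipate. Your high-level plan, including the role of Lemma~\ref{joinn} and the symmetry reduction, matches the paper's execution; what remains is the lengthy but mechanical enumeration of overlaps that the paper writes out in full.
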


\begin{proof}
We will apply the diamond lemma
\cite[Theorem~10.4.1]{cohn}  
to the reduction system\index{reduction system} $T=\{(w_\varphi, y_\varphi)\}$,
where $w_\varphi,y_\varphi \in \FM_n$ are such that  $w_\varphi$ gets mapped to \index{mapped to} $y_\varphi$ by
one of the transformations\index{transformation} (\ref{ti})-(\ref{ui}).
The partial order $\leq$ defined above is compatible\index{compatible} with the structure\index{structure} of $\FM_n$ and
satisfies the descending chain condition\index{descending chain condition}.
It is clear that $y_\varphi < w_\varphi$ for every element of $T$,
thus $T$ is reduction-finite\index{reduction-finite}.
To prove the result, we need to verify that all ambiguities\index{ambiguity} of $T$ are resolvable\index{resolvable}.

Let $w,w_1,w_2\in \FM_n$ such that $w\succ w_1$ and $w\succ
w_2$. We should show that there exists $w_{3}\in \FM_{n}$ such that
$w_{1},w_{2}\geq w_{3}$. We know that $w=a_{i_1}a_{i_2}\cdots
a_{i_m}$,
$$w_1=a_{i_1}\cdots a_{i_x}f(a_{i_{x+1}}\cdots
a_{i_{x+y}})a_{i_{x+y+1}}\cdots a_{i_m}$$ and
$$w_2=a_{i_1}\cdots a_{i_l}g(a_{i_{l+1}}\cdots
a_{i_{l+p}})a_{i_{l+p+1}}\cdots a_{i_m},$$ where $a_{i_1},\dots
,a_{i_m}\in \{a_1,\dots ,a_n\}$ and
\begin{equation*}
\begin{array}{r@{}l}
f,g &{} \in \{t_i\mid 2\leq i\leq
n\} \cup \{r_{j,m}\mid 2\leq j\leq n \text{ and } m\geq 1\}\cup
\{h_i\mid 1\leq i\leq n\}  \vspace{3 pt} \\
&{} \hspace{17 pt} \cup   \{d_{i,v,q}\mid v\in\FM_n,\, n-k+1 \leq i \leq n-1 \text{ and } q
\geq 0 \} \vspace{3 pt}  \\
&{} \hspace{17 pt} \cup \{e_{i,v,q}\mid v\in\FM_n,\, 0 \leq i \leq n-k \text{
and } q \geq 0  \}  \vspace{3 pt} \\
&{} \hspace{17 pt} \cup \{s_{i,v,q} \mid v\in\FM_n, 1 \leq i \leq
n \text{ and } q \geq 0 \}  \vspace{3 pt} \\
&{} \hspace{17 pt} \cup \{u_{i,v,q} \mid v\in\FM_n,\, 1
\leq i \leq n \text{ and } q \geq 0 \}.
\end{array}
\end{equation*}
Note that if $x+y<l+1$ or $l+p<x+1$, then applying $g$ to the
subword\index{subword} $a_{i_{l+1}}\cdots a_{i_{l+p}}$ of $w_1$ and $f$ to the
subword $a_{i_{x+1}}\cdots a_{i_{x+y}}$ of $w_2$ we obtain the same
word $w_3$ and $$w_1\succ w_3\quad\mbox{and}\quad w_2\succ w_3.$$

If $x+y \geq l+1$ and $l+p\geq x+1$, we say that the subwords
$a_{i_{x+1}}\cdots a_{i_{x+y}}$ and $a_{i_{l+1}}\cdots a_{i_{l+p}}$
of $w$ overlap. We will  study all possible overlaps\index{overlap} between
subwords\index{subword} of the forms
\begin{itemize}
\item[$(\alpha)$] $a_{i} a_{i+1} \cdots a_{i-2} a_{i-1}$, \hspace{12 pt} for $2\leq
i\leq n$,
\item[$(\beta)$] $a_{i}a_1^{m}a_2\cdots a_n$, \hspace{12 pt} for $2\leq i\leq n$
and $m\geq 1$,
\item[$(\delta)$] $a_i a_{i+k} a_{i+2k} \cdots  a_{i-2k} a_{i- k}$,
\hspace{12 pt} for $i=1,2,\dots ,n$,
\item[$(\zeta)$] $a_1 \cdots a_n v  b_{i}b_{i-(k+1)} \cdots b_{i-q(k+1)}
a_{i-1-q(k+1)}$, \vspace{3 pt} \\
  for $v\in \FM_n$, $n-k+1 \leq i
\leq n-1 $ and  $q\geq 0$,
\item[$(\eta)$] $a_1 \cdots a_n v  c_{i}c_{i-(k+1)} \cdots c_{i-q(k+1)}
a_{i-k-q(k+1)}$, \vspace{3 pt} \\
for $v\in \FM_n$, $  0 \leq i \leq
n-k $ and  $q\geq 0$,
\item[$(\theta)$]
$ a_1 \cdots a_n v a_i b_{i}b_{i-(k+1)} \cdots b_{i-q(k+1)}
a_{i-1-q(k+1)} $, \vspace{3 pt} \\
for $v\in \FM_n$, $  1 \leq i \leq
n $ and  $q\geq 0$,
\item[$(\iota)$]
$a_1 \cdots a_n v a_i c_{i}c_{i-(k+1)}\cdots c_{i-q(k+1)}
a_{i-k-q(k+1)}$, \vspace{3 pt} \\
for $v\in \FM_n$, $1 \leq i \leq
n $ and  $q\geq 0$.
\end{itemize}

Note that overlaps\index{overlap} of subwords\index{subword} of the form $(\alpha)$ and $(\beta)$
are dealt with in \cite[Theorem~2.1]{alghomshort}. Note
also that not all kinds of subwords can overlap, and that for
overlaps with the subword $a_1 \cdots a_n$ at the beginning of words
of type $(\zeta)$,$(\eta)$,$(\theta)$,$(\iota)$, the verification
that the ambiguity\index{ambiguity} is resolvable\index{resolvable} is immediate.

The remaining cases to consider are overlaps of the following form,
the pairs in square brackets can be
verified in a manner analogous to the pair preceding the bracket: \\
$(\alpha ,\delta )$, [$(\delta, \alpha)$];   \ \ $(\beta, \delta )$;
\ \ $(\delta, \beta )$; \ \ $(\delta, \delta)$; \ \ $(\zeta,
\alpha)$, [$(\eta, \delta)$]; \  \ $(\zeta, \beta)$, [$(\eta,
\beta)$, $(\theta, \beta)$, $(\iota, \beta)$] ; \\
$(\zeta, \delta )$, [$(\eta, \alpha )$]; \ \
 $(\theta,\alpha)$, [$(\iota, \delta)$]; \   \ $(\theta, \delta)$, [$(\iota, \alpha)$];\\
where the notation $(\diamondsuit , \heartsuit )\label{suitss1}$ means that the
ending letter(s) of a subword of the form $\diamondsuit$, overlap(s)
with the beginning letter(s) of a subword of the form $\heartsuit$,
as well as

$(\alpha, \zeta)$, [$(\delta, \eta)$]; \  \ $(\alpha, \eta)$,
[$(\delta, \zeta)$];  \  \
$(\alpha, \theta)$, [$(\delta, \iota)$]; \  \  $(\alpha, \iota)$, [$(\delta, \theta)$];   \\
$(\beta, \zeta)$; \  \ $(\beta, \eta)$; \  \ $(\beta, \theta)$; \  \  $(\beta, \iota)$; \\
$(\zeta, \zeta)$, [$(\eta, \eta)$]; \ \  $(\zeta, \eta)$, [$(\eta, \zeta)$]; \ \  $(\zeta, \theta)$, [$(\eta, \iota)$]; \  \  $(\zeta, \iota)$, [$(\eta, \theta)$]; \\
$(\theta, \zeta)$, [$(\iota, \eta)$]; \  \ $(\theta, \eta)$, [$(\iota, \zeta)$];  \  \  $(\theta, \theta)$, [$(\iota, \iota)$]; \  \  $(\theta, \iota)$, [$(\iota, \theta)$]; \\
where the notation $(\spadesuit , \clubsuit )\label{suitss2}$ means that the ending
letter(s) of a subword of the form $\spadesuit$, overlap(s) with the
beginning letter(s) of a subword beginning just after the end of the
subword $v$ in a subword of the form $\clubsuit $.
\vspace{18 pt} \\
{\it Case 1:}
Overlaps of the form $(\alpha ,\delta )$, [$(\delta, \alpha)$].
 We apply $t_i$ to the subword of type $\alpha$ to get $w_1$
and we apply $h_{i-1}$ to the subword of type $\delta $ to get $w_2$.  We have
$$ w=  \overbrace{
a_{i} a_{i+1} \cdots a_{i-2}  
\makebox[0pt][l]{$\displaystyle{\underbrace{\phantom{
a_{i-1} a_{i-1+k} a_{i-1+2k} \cdots  a_{i-1-2k} a_{i-1- k}   
} }_{\text{$\delta$, $ h_{i-1}$: $w_2$ } } } $}   
a_{i-1}                                                  
}^{\text{$\alpha$, $t_{i}$: $w_1$}}    
a_{i-1+k} a_{i-1+2k} \cdots  a_{i-1-2k} a_{i-1- k} . $$   
$$w_1=a_1 a_2 \cdots a_n a_{i-1+k} a_{i-1+2k} \cdots  a_{i-1-2k} a_{i-1- k}$$  and
$$w_2=a_{i} a_{i+1} \cdots a_{i-2} a_1 a_2 \cdots  a_n   .$$
Applying a sequence of transformations $r$, we get
\begin{eqnarray*}
w_2&=&a_{i} a_{i+1} \cdots a_{i-2} a_1 a_2 \cdots  a_n  \succ \ldots \succ a_1 a_2 \cdots  a_n a_{i} a_{i+1} \cdots a_{i-2}  = w_2'.
\end{eqnarray*}
Then either $d_{i-1,1,0}(w_2')=w_1$ or $e_{i-1,1,0}(w_1)=w_2'$.
\vspace{18 pt} \\
{\it Case 2:} Overlaps of the form $(\beta, \delta )$.
We apply $r_{j,m}$ to the subword of type $\beta$ to get
$w_1$ and we apply $h_n$ to the subword of type $\delta$ to get
$w_2$.  We have
$$ w=  \overbrace{
a_{j}a_1^{m}a_2\cdots   
\makebox[0pt][l]{$\displaystyle{\underbrace{\phantom{
a_n a_{k} a_{2 k} \cdots  a_{-2 k} a_{- k}   
} }_{\text{$\delta$, $h_n$: $w_2$ } } } $}   
a_n                                         
}^{\text{$\beta$, $r_{j,m}$: $w_1$}}    
a_{k} a_{2 k} \cdots  a_{-2 k} a_{- k} .$$   
$w_1=a_1\cdots a_n a_{j}a_1^{m-1} a_{k} a_{2 k} \cdots  a_{-2 k} a_{- k}     \quad\mbox{and}\quad
w_2=a_{j}a_1^{m}a_2\cdots a_{n-1} a_1 \cdots a_n.$
\vspace{6 pt} \\
Applying $e_{n,a_{j}a_1^{m-1},0}$, we get
$$  w_1= a_1\cdots a_n a_{j}a_1^{m-1} a_{k} a_{2 k} \cdots  a_{-2 k} a_{- k} \succ a_1 \cdots a_n a_{j}a_1^{m-1}a_1a_2\cdots a_{-2} a_{-1}. $$
Applying a sequence of transformations $r$, we get
$$ w_2= a_{j}a_1^{m}a_2\cdots a_{n-1} a_1 \cdots a_n \succ \ldots \succ a_1 \cdots a_n a_{j}a_1^{m}a_2\cdots a_{n-1}.$$
\vspace{6 pt} \\
{\it Case 3:} Overlaps of the form $(\delta,\beta)$. Here there are two possibilities:
\begin{itemize}
\item[(a)] $a_{i-k}=a_1$. We apply $h_{1+k}$ to the subword of type $\delta$ to get $w_1$
and we apply $r_{1-k,m}$ to the subword subword of type $\beta$ to get $w_2$.
$$ w=  \overbrace{
a_{1+k}a_{1+2k}\cdots   
\makebox[0pt][l]{$\displaystyle{\underbrace{\phantom{
a_{1-k} a_{1} a_{1}^{m-1}a_2 \cdots  a_{n}   
} }_{\text{$\beta$, $r_{1-k,m}$: $w_2$ } } } $}   
a_{1-k}a_1                                         
}^{\text{$\delta$, $h_{1+k}$: $w_1$}}    
a_{1}^{m-1}a_2 \cdots  a_{n} .$$   
$w_1=a_1\cdots a_na_{1}^{m-1}a_2 \cdots  a_{n}     \quad\mbox{and}\quad
w_2= a_{1+k}a_{1+2k}\cdots a_{1-2k} a_1\cdots a_n  a_{1-k} a_1^{m-1} .$

If $m \neq 1$, applying a sequence of transformations $r$, we get
$$ w_1 = a_1\cdots a_na_{1}^{m-1}a_2 \cdots  a_{n} \succ \ldots \succ a_1^2 (a_2 \cdots a_n)^2 a_1^{m-2}.$$
Applying a sequence of transformations $r$, followed by $h_{1+k}$, followed by a sequence of transformations $r$, we get
\begin{equation*}
\begin{array}{r@{}l}
w_2&{}=  a_{1+k}a_{1+2k}\cdots a_{1-2k} a_1\cdots a_n  a_{1-k} a_1^{m-1}  \\
&{} \succ \ldots \succ
a_1\cdots a_n  a_{1+k}a_{1+2k}\cdots a_{1-2k}  a_{1-k} a_1^{m-1} \\
&{} \succ  a_1\cdots a_n a_1 \cdots a_n a_1^{m-2} \succ \ldots \succ  a_1^2 (a_2 \cdots a_n)^2 a_1^{m-2}.
\end{array}
\end{equation*}
If $m = 1$, applying a sequence of transformations $r$ followed by $e_{1,1,0}$, we get
\begin{equation*}
\begin{array}{r@{}l}
w_2&{}=  a_{1+k}a_{1+2k}\cdots a_{1-2k} a_1\cdots a_n  a_{1-k} \succ \ldots \succ a_1 \cdots a_n a_{1+k}a_{1+2k}\cdots a_{1-2k} a_{1-k} \\
&{}\succ a_1 \cdots a_n a_2 \cdots a_n = w_1.
\end{array}
\end{equation*}
\item[(b)] $a_{i-k}\neq a_1$. We apply $h_i$ to the subword of type $\delta$ to get $w_1$
and we apply $r_{i-k,m}$ to the subword of type $\beta$ to get $w_2$.
$$ w=  \overbrace{
a_{i}a_{i+k}\cdots  a_{i-2k} 
\makebox[0pt][l]{$\displaystyle{\underbrace{\phantom{
a_{i-k}a_{1}^{m}a_2 \cdots  a_{n}   
} }_{\text{$\beta$, $r_{i-k,m}$: $w_2$ } } } $}   
a_{i-k}                                         
}^{\text{$\delta$, $h_{i}$: $w_1$}}    
a_{1}^{m}a_2 \cdots  a_{n} .$$   
\end{itemize}
$w_1=a_1\cdots a_na_{1}^{m}a_2 \cdots  a_{n}     \quad\mbox{and}\quad
w_2= a_{i}a_{i+k}\cdots a_{i-2k} a_1\cdots a_n  a_{i-k} a_1^{m-1} .$
Applying a sequence of transformations $r$, we get
$$ w_1 = a_1\cdots a_na_{1}^{m}a_2 \cdots  a_{n} \succ \ldots \succ a_1^2 (a_2 \cdots a_n)^2 a_1^{m-1}.$$
If $i\neq 1$, applying a sequence of transformations $r$, followed by $h_{i}$, followed by a sequence of transformations $r$, we get
\begin{equation*}
\begin{array}{r@{}l}
w_2&{} =   a_{i}a_{i+k}\cdots a_{i-2k} a_1\cdots a_n  a_{i-k} a_1^{m-1}  \succ \ldots \succ
a_1\cdots a_n  a_{i}a_{i+k}\cdots a_{i-2k}  a_{i-k} a_1^{m-1} \\
&{} \succ  a_1\cdots a_n a_1 \cdots a_n a_1^{m-1} \succ  a_1^2 (a_2 \cdots a_n)^2 a_1^{m-1}.
\end{array}
\end{equation*}
If $i= 1$, applying a sequence of transformations $r$, followed by $e_{1,1,0}$, we get
\begin{equation*}
\begin{array}{r@{}l}
w_2&{} =   a_{1}a_{1+k}\cdots a_{1-2k} a_1\cdots a_n  a_{1-k} a_1^{m-1}  \succ \ldots \succ
a_1^2 a_2\cdots a_n  a_{1+k}\cdots a_{1-2k}  a_{1-k} a_1^{m-1} \\
&{} \succ  a_1^2 a_2 \cdots a_n  a_2 \cdots a_n a_1^{m-1} .
\end{array}
\end{equation*}
\vspace{0 pt} \\
{\it Case 4:} Overlaps of the form $(\delta, \delta )$. We apply
$h_i$ to a subword of type $\delta$ to get $w_1$ and we apply
$h_{i'}$ to the other subword of type $\delta$ to get $w_2$.  We
have
$$ w=  \overbrace{
a_i a_{i+k}  \cdots   
\makebox[0pt][l]{$\displaystyle{\underbrace{\phantom{
a_{i'} \cdots a_{i- k}   \cdots a_{i'- k}  
} }_{\text{$\delta$, $h_{i'}$: $w_2$ } } } $}   
a_{i'} \cdots a_{i- k}                                     
}^{\text{$\delta$, $h_i$: $w_1$}}    
  \cdots a_{i'- k} .$$   
$w_1=a_1   \cdots a_n a_{i}   \cdots a_{i'- k} \quad\mbox{and}\quad  w_2=a_i  \cdots a_{i'-k} a_1 \cdots a_n.$
\vspace{6 pt} \\
If $i\neq 1$, applying a sequence of transformations $r$, we get
$$ w_2= a_i  \cdots a_{i'-k} a_1 \cdots a_n \succ \ldots \succ a_1   \cdots a_n a_{i}   \cdots a_{i'- k} =w_1.$$
If $i = 1$, applying $t_2$, we get
$$ w_1=a_1   \cdots a_n a_{1} a_{1+k}   \cdots a_{i'- k}   \succ   a_1^2 a_2  \cdots a_n a_{1+k}   \cdots a_{i'- k},$$
and applying a sequence of transformations $r$, we get
$$w_2=a_1  \cdots a_{i'-k} a_1 \cdots a_n \succ  \ldots \succ   a_1^2 a_2  \cdots a_n a_{1+k}   \cdots a_{i'- k}.$$
{\it Case 5:} Overlaps of the form $(\zeta, \alpha )$ ,[$(\eta,
\delta)$]. We apply $d_{i,v,q}$ to the subword of type $\zeta$ to
get $w_1$ and we apply $t_{i'}$ to the subword of type $\alpha$ to
get $w_2$.  We have
$$ w=  \overbrace{
 a_1 \cdots a_n v  b_{i}b_{i-(k+1)} \cdots b_{i-(q-1)(k+1)} a_{i+1-q(k+1)}    \cdots   
\makebox[0pt][l]{$\displaystyle{\underbrace{\phantom{
 a_{i'} \cdots  a_{i-1-q(k+1)} \cdots   a_{i'-1} 
} }_{\text{$\alpha$, $t_{i'}$: $w_2$ } } } $}   
a_{i'} \cdots  a_{i-1-q(k+1)}                                 
}^{\text{$\zeta$, $d_{i,v,q}$: $w_1$}}    
 \cdots   a_{i'-1} .$$   
\begin{equation*}
\begin{array}{r@{}l}
w_1&{}=a_1 \cdots a_n v  a_{i+k} c_{i+k} c_{i+k-(k+1)} \cdots c_{i+k-q(k+1)} a_{i-q(k+1)} \cdots   a_{i'-1}, \vspace{3 pt}  \\
w_2&{}= a_1 \cdots a_n v   b_{i}b_{i-(k+1)} \cdots b_{i-(q-1)(k+1)} a_{i+1-q(k+1)} \cdots a_{i'-1} a_1 \cdots a_n.
\end{array}
\end{equation*}
Applying $u_{i+k,v,q}$, we get
\begin{equation*}
\begin{array}{r@{}l}
w_1 &{}=  a_1 \cdots a_n v  a_{i+k} c_{i+k} c_{i+k-(k+1)} \cdots c_{i+k-q(k+1)} a_{i+k-k-q(k+1)}a_{i+1-q(k+1)}\cdots   a_{i'-1} \vspace{3 pt} \\
&{} \succ  a_1^2 (a_2 \cdots a_n)^2  v b_{i}b_{i-(k+1)} \cdots
b_{i-(q-1)(k+1)} a_{i+1-q(k+1)} \cdots a_{i'-1}.
\end{array}
\end{equation*}
Applying a sequence of transformations $r$, we get
\begin{equation*}
\begin{array}{r@{}l}
w_2  &{}= a_1 \cdots a_n v   b_{i}b_{i-(k+1)} \cdots b_{i-(q-1)(k+1)} a_{i+1-q(k+1)} \cdots a_{i'-1} a_1 \cdots a_n \vspace{3 pt}   \\
&{} \succ   \dots \succ a_1^2 (a_2 \cdots a_n)^2  b_{i}b_{i-(k+1)}
\cdots b_{i-(q-1)(k+1)} a_{i+1-q(k+1)} \cdots a_{i'-1}.
\end{array}
\end{equation*}
\vspace{0 pt}\\
{\it Case 6:} Overlaps of the form $(\zeta, \beta)$, [$(\eta,
\beta)$, $(\theta, \beta)$, $(\iota, \beta)$]. We apply
$d_{i,v,q}$ to the subword of type $\zeta$ to get $w_1$ and we apply
$r_{j,m}$ to the subword of type $\beta$ to get $w_2$. We have
$j=i-1-q(k+1)$. There are two possibilities.  \vspace{ 6 pt} \\
(a)   $j=i-1-q(k+1)\neq 1$.
$$ w=  \overbrace{
a_1 \cdots a_n v  b_{i} b_{i-(k+1)} \cdots b_{i-q(k+1)} 
\makebox[0pt][l]{$\displaystyle{\underbrace{\phantom{
 a_{i-1-q(k+1)}a_1^{m}a_2\cdots a_n   
} }_{\text{$\beta$, $r_{j,m}$: $w_2$ } } } $}   
a_{i-1-q(k+1)}                                 
}^{\text{$\zeta$, $d_{i,v,q}$: $w_1$}}    
a_1^{m}a_2\cdots a_n .$$   
\begin{equation*}
\begin{array}{r@{}l}
w_1&{}=a_1 \cdots a_n v  a_{i+k}  c_{i+k} c_{i+k-(k+1)} \cdots c_{i+k-q(k+1)} a_1^{m}a_2\cdots a_n, \vspace{3 pt} \\
w_2&{}=a_1 \cdots a_n v  b_{i} b_{i-(k+1)} \cdots b_{i-q(k+1)} a_1
a_2\cdots a_n  a_{i-1-q(k+1)} a_1^{m-1}.
\end{array}
\end{equation*}
Applying a sequence of transformations $r$, we get
\begin{equation*}
\begin{array}{r@{}l}
w_1&{}=a_1 \cdots a_n v  a_{i+k}  c_{i+k}
c_{i+k-(k+1)} \cdots c_{i+k-q(k+1)} a_1^{m}a_2\cdots a_n \vspace{3 pt} \\
&{}\succ \ldots \succ  a_1^2 (a_2 \cdots a_n)^2 v a_{i+k}  c_{i+k}
c_{i+k-(k+1)} \cdots c_{i+k-q(k+1)} a_1^{m-1}.
\end{array}
\end{equation*}
Applying a sequence of transformations $r$ and $d_{i,a_2\cdots
a_nv,q}$, we get
\begin{equation*}
\begin{array}{r@{}l}
w_2 &{} = a_1 \cdots a_n v  b_{i} b_{i-(k+1)} \cdots b_{i-q(k+1)} a_1 a_2\cdots a_n  a_{i-1-q(k+1)} a_1^{m-1} \vspace{3 pt} \\
&{} \succ   \ldots \succ  a_1^2 (a_2 \cdots a_n)^2 v  b_{i}
b_{i-(k+1)} \cdots b_{i-q(k+1)} a_{i-1-q(k+1)} a_1^{m-1} \vspace{3 pt} \\
&{} \succ   a_1^2 (a_2 \cdots a_n)^2 v  a_{i+k}  c_{i+k}
c_{i+k-(k+1)} \cdots c_{i+k-q(k+1)} a_1^{m-1}.
\end{array}
\end{equation*}
(b) $j=i-1-q(k+1)=1$.
$$ w=  \overbrace{
a_1 \cdots a_n v  b_{i} b_{i-(k+1)} \cdots b_{i-q(k+1)} 
\makebox[0pt][l]{$\displaystyle{\underbrace{\phantom{
 a_{1}a_1^{m-1}a_2\cdots a_n   
} }_{\text{$\beta$, $r_{j,m}$: $w_2$ } } } $}   
a_{1}                                 
}^{\text{$\zeta$, $d_{i,v,q}$: $w_1$}}    
a_1^{m-1}a_2\cdots a_n .$$   
\begin{equation*}
\begin{array}{r@{}l}
w_1&{}=a_1 \cdots a_n v  a_{i+k}  c_{i+k} c_{i+k-(k+1)} \cdots c_{i+k-q(k+1)} a_1^{m-1}a_2\cdots a_n, \vspace{3 pt} \\
w_2&{}=a_1 \cdots a_n v  b_{i} b_{i-(k+1)} \cdots b_{i-(q-1)(k+1)} a_{i+1-q(k+1)} \cdots a_{i-3-q(k+1)} \cdot  \vspace{3 pt} \\
&{}   \hspace{ 250 pt} \cdot a_1a_2\cdots a_n  a_{i-2-q(k+1)} a_1^{m-1}.
\end{array}
\end{equation*}
If $m\neq 1$, this is solved in the same way as (a). If $m=1$, we use the facts that $a_{i+k-k-q(k+1)}=a_2$ and
$a_3 \cdots a_n= a_{i+1-q(k+1)} \cdots a_{i-2-q(k+1)} = b_{i-q(k+1)}$ and applying $u_{i+k,v,q}$, we get
\begin{equation*}
\begin{array}{r@{}l}
w_1&{}= a_1 \cdots a_n v  a_{i+k}  c_{i+k} c_{i+k-(k+1)} \cdots c_{i+k-q(k+1)} a_2\cdots a_n  \vspace{3 pt}  \\
&{}\succ  a_1^2 (a_2 \cdots a_n)^2 v   b_{i+k+1-(k+1)} b_{i+k+1-2(k+1)} \cdots b_{i+k+1-q(k+1)} a_3\cdots a_n  \vspace{3 pt} \\
&{}= a_1^2 (a_2 \cdots a_n)^2 v   b_{i} b_{i-(k+1)} \cdots b_{i-(q-1)(k+1)} b_{i-q(k+1)}.
\end{array}
\end{equation*}
Applying a sequence of transformations $r$, we get
\begin{equation*}
\begin{array}{r@{}l}
w_2&{}=a_1 \cdots a_n v  b_{i} b_{i-(k+1)} \cdots b_{i-(q-1)(k+1)} a_{i+1-q(k+1)} \cdots a_{i-3-q(k+1)} \cdot  \vspace{3 pt} \\
&{}   \hspace{ 275 pt} \cdot a_1a_2\cdots a_n  a_{i-2-q(k+1)}   \vspace{3 pt} \\
&{}\succ  a_1^2 (a_2 \cdots a_n)^2 v  b_{i} b_{i-(k+1)} \cdots b_{i-(q-1)(k+1)} a_{i+1-q(k+1)} \cdots a_{i-3-q(k+1)}  a_{i-2-q(k+1)}  \vspace{3 pt} \\
&{} = a_1^2 (a_2 \cdots a_n)^2 v   b_{i} b_{i-(k+1)} \cdots b_{i-(q-1)(k+1)} b_{i-q(k+1)}.
\end{array}
\end{equation*}
\vspace{0 pt} \\
{\it Case 7:} Overlaps of the form $(\zeta, \delta )$, [$(\eta,
\alpha )$]. We apply $d_{i,v,q}$ to the subword of type $\zeta$ to
get $w_1$ and we apply $h_{i-1-q(k+1)}$ to the subword of type
$\delta$ to get $w_2$.  We have
$$ w=  \overbrace{
 a_1 \cdots a_n v  b_{i}b_{i-(k+1)} \cdots b_{i-q(k+1)}   
\makebox[0pt][l]{$\displaystyle{\underbrace{\phantom{
 a_{i-1-q(1+k)}  a_{i-1-q(k+1)+k}  \cdots  a_{i-1-q(k+1)-k} 
} }_{\text{$\delta$, $h_{i-1-q(k+1)}$: $w_2$ } } } $}   
a_{i-1-q(k+1)}                        
}^{\text{$\zeta$, $d_{i,v,q}$: $w_1$}}    
  a_{i-1-q(k+1)+k}  \cdots  a_{i-1-q(k+1)-k}.  $$   
\begin{equation*}
\begin{array}{r@{}l}
 w_1&{}=a_1 \cdots a_n v  a_{i+k} c_{i+k} c_{i+k-(k+1)} \cdots c_{i+k-q(k+1)} a_{i-1-q(k+1)+k}  \cdots  a_{i-1-q(k+1)-k}, \vspace{3 pt} \\
 w_2&{}= a_1 \cdots a_n v b_{i}b_{i-(k+1)} \cdots b_{i-q(k+1)} a_1\cdots a_n .
\end{array}
\end{equation*}
Applying $u_{i+k,v,q+1}$, we get
\begin{equation*}
\begin{array}{r@{}l}
w_1 &{}= a_1 \cdots a_n v a_{i+k} c_{i+k} c_{i+k-(k+1)} \cdots c_{i+k-q(k+1)} a_{i+k+k-q(k+1)-k-1} \cdots \vspace{3 pt} \\
&{} \hspace{285 pt} \cdots  a_{i+k-k-q(k+1)-k-1}  \\
&{}= a_1 \cdots a_n v a_{i+k} c_{i+k} c_{i+k-(k+1)} \cdots c_{i+k-q(k+1)} c_{i+k-(q+1)(k+1)}  a_{i+k-k-(q+1)(k+1)} \vspace{3 pt} \\
&{} \succ   a_1^2 (a_2 \cdots a_n)^2 v b_{i} b_{i-(k+1)} \cdots
b_{i-q(k+1)}.
\end{array}
\end{equation*}
Applying a sequence of transformations $r$ we get
\begin{equation*}
\begin{array}{r@{}l}
w_2&{} = a_1 \cdots a_n v b_{i}b_{i-(k+1)} \cdots b_{i-q(k+1)} a_1 \cdots a_n  \vspace {3 pt} \\
&{} \succ \ldots \succ a_1^2 (a_2 \cdots a_n)^2 v b_{i}b_{i-(k+1)} \cdots b_{i-q(k+1)}.
\end{array}
\end{equation*}
\vspace{0 pt} \\
{\it Case 8:} Overlaps of the form  $(\theta,\alpha)$, [$(\iota,
\delta)$]. We apply $s_{i,v,q}$ to the subword of type $\theta$ to
get $w_1$, and we apply $t_{i'}$ to the subword of type $\alpha$ to
get $w_2$. We have
$$ w=  \overbrace{
a_1 \cdots a_n v a_{i}   b_{i}b_{i-(k+1)} \cdots b_{i-(q-1)(k+1)} a_{i+1-q(k+1)}  \cdots   
\makebox[0pt][l]{$\displaystyle{\underbrace{\phantom{
a_{i'} \cdots a_{i-1 -q(k+1)}   \cdots a_{i'-1}  
} }_{\text{$\alpha$, $t_{i'}$: $w_2$ } } } $}   
a_{i'}\cdots a_{i-1 -q(k+1)}                                
}^{\text{$\theta$, $s_{i,v,q}$: $w_1$}}    
 \cdots a_{i'-1} .$$   
 \begin{equation*}
\begin{array}{r@{}l}
w_1&{}=  a_1^2 (a_2 \cdots a_n)^2  v c_{i+k-(k+1)} c_{i+k-2(k+1)} \cdots c_{i+k-q(k+1)}  a_{i -q(k+1)}   \cdots a_{i'-1}, \vspace{3 pt} \\
w_2&{}=  a_1 \cdots a_n v  a_{i}  b_{i}b_{i-(k+1)} \cdots b_{i-(q-1)(k+1)} a_{i+1-q(k+1)}  \cdots a_{i'-1}a_1    \cdots a_n .
\end{array}
\end{equation*}
We have
\begin{equation*}
\begin{array}{r@{}l}
w_1&{}=  a_1^2 (a_2 \cdots a_n)^2  v c_{i+k-(k+1)} c_{i+k-2(k+1)} \cdots c_{i+k-q(k+1)}  a_{i -q(k+1)}   \cdots a_{i'-1} \vspace{3 pt} \\
&{}= a_1^2 (a_2 \cdots a_n)^2  v c_{i-1}c_{i-1-(k+1)} \cdots
c_{i-1-(q-1)(k+1)} a_{i-1-k-(q-1)(k+1)} \cdots a_{i'-1}.
\end{array}
\end{equation*}
Applying a sequence of transformations $r$, we get
$$ w_2 \succ \ldots \succ a_1^2 (a_2 \cdots a_n)^2 v  a_{i}  b_{i}b_{i-(k+1)} \cdots b_{i-(q-1)(k+1)} a_{i+1-q(k+1)}  \cdots a_{i'-1}= w_2'.$$
By Lemma~\ref{joinn}, there exists $w_3\in \FM_n$ such that
$w_1,w_2'\geq w_3$.
\vspace{18 pt} \\
{\it Case 9:} Overlaps of the form  $(\theta, \delta)$, [$(\iota,
\alpha)$]. We apply $s_{i,v,q}$ to the subword of type $\theta$ to
get $w_1$, and we apply $h_{i-1-q(k+1)}$ to the subword of type
$\delta$ to get $w_2$. We have
$$ w=  \overbrace{
 a_1 \cdots a_n v  a_{i}  b_{i} b_{i-(k+1)} \cdots b_{i-q(k+1)}   
\makebox[0pt][l]{$\displaystyle{\underbrace{\phantom{
 a_{i-1 -q(k+1)} a_{i-1 -q(k+1)+k} \cdots a_{i-1 -q(k+1)-k}  
} }_{\text{$\delta$, $h_{i-1-q(k+1)}$: $w_2$ } } } $}   
 a_{i-1 -q(k+1)}                           
}^{\text{$\theta$, $s_{i,v,q}$: $w_1$}}    
 a_{i-1 -q(k+1)+k} \cdots a_{i-1 -q(k+1)-k} .$$   
\begin{equation*}
\begin{array}{r@{}l}
w_1&{}=  a_1^2 (a_2 \cdots a_n)^2  v c_{i+k-(k+1)} c_{i+k-2(k+1)} \cdots c_{i+k-q(k+1)}  a_{i-1 -q(k+1)+k} \cdots a_{i-1 -q(k+1)-k}, \vspace{3 pt} \\
 w_2&{}=  a_1 \cdots a_n v  a_{i} b_{i} b_{i-(k+1)} \cdots b_{i-q(k+1)} a_1 \cdots a_n.
\end{array}
\end{equation*}
We have
\begin{equation*}
\begin{array}{r@{}l}
w_1 &{}= a_1^2 (a_2 \cdots a_n)^2  v c_{i+k-(k+1)} c_{i+k-2(k+1)} \cdots c_{i+k-q(k+1)}  a_{i-1 -q(k+1)+k} \cdots a_{i-1 -q(k+1)-k} \vspace{3 pt} \\
 &{}=  a_1^2 (a_2 \cdots a_n)^2  v c_{i+k-(k+1)} c_{i+k-2(k+1)} \cdots  c_{i+k-(q+1)(k+1)} a_{i+k-k-(q+1)(k+1)} \vspace{3 pt} \\
 &{}=  a_1^2 (a_2 \cdots a_n)^2  v c_{i-1} c_{i-1-(k+1)} \cdots c_{i-1-q(k+1)} a_{i-1-k-q(k+1)}.
\end{array}
\end{equation*}
Applying a sequence of transformations $r$, we get
$$ w_2 \succ \ldots \succ  a_1^2(a_2 \cdots a_n)^2 v  a_{i} b_{i} b_{i-(k+1)} \cdots b_{i-q(k+1)}  = w_2'.$$
By Lemma~\ref{joinn}, there exists $w_3\in \FM_n$ such that
$w_1,w_2'\geq w_3$.
\vspace{18 pt} \\
{\it Case 10:} Overlaps of the form $(\alpha, \zeta)$, [$(\delta,
\eta)$]. We apply $d_{i,v,q}$ to the subword of type $\zeta$ to get
$w_1$ and we apply $t_{i'}$ to the subword of type $\alpha$ to get
$w_2$. There are two kinds of overlap, the first kind is as follows:
\begin{equation*}
\begin{array}{r@{}l}
w&{}= a_1 \cdots a_n v  b_{i} b_{i-(k+1)} \cdots b_{i-q(k+1)} a_{i-1-q(k+1)}  \vspace{3 pt}   \\   
&{}=  \overbrace{
   a_1 \cdots a_n v' 
\makebox[0pt][l]{$\displaystyle{\underbrace{\phantom{
a_{i'}  \cdots  a_{i+1} \cdots a_{i'-1}
} }_{\text{$\alpha$, $t_{i'}$: $w_2$ } } } $}   
a_{i'}  \cdots  a_{i+1} \cdots a_{i'-1}      \cdots a_{i-2} b_{i-(k+1)} b_{i-2(k+1)} \cdots b_{i-q(k+1)} a_{i-1-q(k+1)}    
}^{\text{$\zeta$, $d_{i,v,q}$: $w_1$}} .   
\end{array}
\end{equation*}
\begin{equation*}
\begin{array}{r@{}l}
w_1&{}= a_1 \cdots a_n v'   a_{i'}  \cdots  a_{i}  a_{i+k} c_{i+k} c_{i+k-(k+1)} \cdots c_{i+k-q(k+1)}, \vspace{3 pt} \\
w_2&{}=  a_1 \cdots a_n v'   a_1  \cdots  a_na_{i'} \cdots a_{i-2}b_{i-(k+1)} b_{i-2(k+1)} \cdots b_{i-q(k+1)} a_{i-1-q(k+1)}.
\end{array}
\end{equation*}
Applying $h_i$ followed by a sequence of transformations $r$, we get
\begin{equation*}
\begin{array}{r@{}l}
w_1 &{}= a_1 \cdots a_n v'   a_{i'}  \cdots  a_{i}  a_{i+k} c_{i+k}
c_{i+k-(k+1)} \cdots c_{i+k-q(k+1)} \vspace{3 pt} \\
&{} \succ  a_1 \cdots a_n v'   a_{i'}  \cdots  a_{i-1}a_1 \cdots a_n  c_{i+k-(k+1)} c_{i+k-2(k+1)} \cdots c_{i+k-q(k+1)}  \vspace{3 pt} \\
&{} \succ   \ldots \succ a_1^2 (a_2 \cdots a_n)^2 v'   a_{i'}  \cdots  a_{i-1} c_{i+k-(k+1)} c_{i+k-2(k+1)} \cdots c_{i+k-q(k+1)} \vspace{3 pt}  \\
&{}= a_1^2 (a_2 \cdots a_n)^2 v'   a_{i'}  \cdots  a_{i-1} c_{i-1} c_{i-1-(k+1)} \cdots c_{i-1-(q-1)(k+1)}\vspace{3 pt} \\
&{}= w_1'.
\end{array}
\end{equation*}
Applying a sequence of transformations $r$, we get
\begin{equation*}
\begin{array}{r@{}l}
w_2&{}= a_1 \cdots a_n v'   a_1  \cdots  a_na_{i'} \cdots a_{i-2}b_{i-(k+1)} b_{i-2(k+1)} \cdots b_{i-q(k+1)} a_{i-1-q(k+1)} \vspace{3 pt} \\
&{}\succ a_1^2 (a_2 \cdots a_n)^2 v'   a_{i'} \cdots a_{i-2}b_{i-(k+1)} b_{i-2(k+1)} \cdots b_{i-q(k+1)} a_{i-1-q(k+1)} \vspace{3pt} \\
&{}= a_1^2 (a_2 \cdots a_n)^2 v'   a_{i'} \cdots a_{i-2}b_{i-k-1}
b_{i-k-1-(k+1)} \cdots \vspace{3 pt} \\
&{} \hspace{200 pt} \cdots b_{i-k-1-(q-1)(k+1)} a_{i-k-1-1-(q-1)(k+1)} \vspace{3 pt} \\
&{}=w_2'.
\end{array}
\end{equation*}
By Lemma~\ref{joinn}, there exists $w_3\in \FM_n$ such that
$w_1',w_2'\geq w_3$.
\vspace{6 pt} \\
The second kind of overlap is as follows:
\begin{equation*}
\hspace{-5 pt}
\begin{array}{r@{}l}
w &{}=  a_1 \cdots a_n v  b_{i} b_{i-(k+1)} \cdots b_{i-q(k+1)} a_{i-1-q(k+1)} \vspace{3 pt} \\  
&{}=  \overbrace{
   a_1 \cdots 
\makebox[0pt][l]{$\displaystyle{\underbrace{\phantom{
a_{i'} \cdots  a_n \cdots a_{i+1}  \cdots a_{i'-1}
} }_{\text{$\alpha$, $t_{i'}$: $w_2$ } } } $}   
 a_{i'} \cdots  a_n \cdots a_{i+1}  \cdots a_{i'-1} \cdots a_{i-2}b_{i-(k+1)} b_{i-2(k+1)} \cdots b_{i-q(k+1)} a_{i-1-q(k+1)}   
}^{\text{$\zeta$, $d_{i,v,q}$: $w_1$}} .      
\end{array}
\end{equation*}
Here $v=a_{n+1} \cdots a_{i}$. Thus
\begin{equation*}
\begin{array}{r@{}l}
 w_1&{}=a_1 \cdots a_{i'} \cdots  a_n \cdots  a_{i}  a_{i+k}  c_{i+k} c_{i+k-(k+1)} \cdots c_{i+k-q(k+1)}, \vspace{3 pt} \\
w_2&{}= a_1 \cdots a_{i'-1} a_1 a_2 \cdots a_n a_{i'} \cdots a_{i-2}b_{i-(k+1)} b_{i-2(k+1)} \cdots b_{i-q(k+1)} a_{i-1-q(k+1)}.
\end{array}
\end{equation*}
Applying $h_i$ followed by a sequence of transformations $r$, we get
\begin{equation*}
\begin{array}{r@{}l}
w_1&{}=a_1 \cdots a_{i'} \cdots  a_n \cdots  a_{i}  a_{i+k} c_{i+k}
c_{i+k-(k+1)} \cdots c_{i+k-q(k+1)} \vspace{3 pt}  \\
&{} \succ  a_1 \cdots a_{i'} \cdots  a_n \cdots  a_{i-1} a_1\cdots a_n c_{i+k-(k+1)} c_{i+k-2(k+1)} \cdots c_{i+k-q(k+1)} \vspace{3 pt}  \\
&{} \succ  \ldots \succ  a_1^2 (a_2 \cdots  a_n)^2 a_1 \cdots  a_{i-1}c_{i+k-(k+1)} c_{i+k-2(k+1)}\cdots c_{i+k-q(k+1)} \vspace{3 pt}  \\
&{} =  a_1^2 (a_2 \cdots  a_n)^2 a_1 \cdots  a_{i-1}c_{i-1} c_{i-1-(k+1)} \cdots c_{i-1-(q-1)(k+1)} \vspace{3 pt} \\
&{}=w_1'.
\end{array}
\end{equation*}
Applying a sequence of transformations $r$, we get
\begin{equation*}
\begin{array}{r@{}l}
w_2&{} = a_1 \cdots a_{i'-1} a_1 a_2 \cdots a_n a_{i'} \cdots a_{i-2}b_{i-(k+1)} b_{i-2(k+1)} \cdots b_{i-q(k+1)} a_{i-1-q(k+1)}  \vspace{3 pt} \\
&{} \succ   \ldots \succ  a_1^2 (a_2 \cdots a_n)^2 a_1 \cdots a_{i-2}b_{i-(k+1)} b_{i-2(k+1)} \cdots b_{i-q(k+1)} a_{i-1-q(k+1)} \vspace{3 pt} \\
&{} = a_1^2 (a_2 \cdots a_n)^2 a_1 \cdots a_{i-2} b_{i-k-1}
b_{i-k-1-(k+1)} \cdots \vspace{3 pt} \\
&{} \hspace{198 pt}  \cdots b_{i-k-1-(q-1)(k+1)} a_{i-k-1-1-(q-1)(k+1)} \vspace{3 pt} \\
&{}=w_2'.
\end{array}
\end{equation*}
\vspace{0 pt} \\
By Lemma~\ref{joinn}, there exists $w_3\in \FM_n$ such that
$w_1',w_2'\geq w_3$.
\vspace{18 pt} \\
{\it Case 11:} Overlaps of the form $(\alpha, \eta)$, [$(\delta,
\zeta)$]. We apply $e_{i,v,q}$ to the subword of type $\eta$ to get
$w_1$ and we apply $t_{i+k+1}$ to the subword of type $\alpha$ to
get $w_2$. There are two kinds of overlap. We consider the first
kind of overlap, the second kind, as in {\it case 10}, can be solved
similarly. We have
\begin{equation*}
\begin{array}{r@{}l}
w &{} = a_1 \cdots a_n v  c_{i} c_{i-(k+1)} \cdots c_{i-q(k+1)} a_{i-k-q(k+1)} \vspace{3pt} \\  
&{} =  \overbrace{
  a_1 \cdots a_n v' 
\makebox[0pt][l]{$\displaystyle{\underbrace{\phantom{
 a_{i+e+1} a_{i+e+2} \cdots a_{i+k-1}a_{i+k}
} }_{\text{$\alpha$, $t_{i+k+1}$: $w_2$ } } } $}   
a_{i+k+1} a_{i+k+2} \cdots a_{i+k-1}a_{i+k} a_{i+2k} \cdots   a_{i-3k}a_{i-2k} c_{i-(k+1)} c_{i-2(k+1)} \cdots c_{i-q(k+1)} a_{i-k-q(k+1)}    
}^{\text{$\eta$, $e_{i,v,q}$: $w_1$}} .  
\end{array}
\end{equation*}
\begin{equation*}
\hspace{-5 pt}
\begin{array}{r@{}l}
w_1&{}=a_1 \cdots a_n v' a_{i+k+1} a_{i+k+2} \cdots a_{i+k-1}  a_{i+1} b_{i+1} b_{i+1-(k+1)} \cdots b_{i+1-q(k+1)}, \vspace{3 pt} \\
w_2&{}= a_1 \cdots a_n v'   a_1  \cdots  a_n  a_{i+2k} \cdots a_{i-3k}a_{i-2k} c_{i-(k+1)} c_{i-2(k+1)} \cdots c_{i-q(k+1)} a_{i-k-q(k+1)}.
\end{array}
\end{equation*}
Applying $s_{i+k+1,v',1}$, we get
\begin{equation*}
\begin{array}{r@{}l}
w_1&{}=a_1 \cdots a_n v' a_{i+k+1} a_{i+k+2} \cdots a_{i+k-1}  a_{i+1} b_{i+1} b_{i+1-(k+1)} \cdots b_{i+1-q(k+1)} \vspace{3 pt}  \\
&{}= a_1 \cdots a_n v' a_{i+k+1} b_{i+k+1}   a_{i+1} b_{i+1} b_{i+1-(k+1)}b_{i+1-2(k+1)} \cdots b_{i+1-q(k+1)} \vspace{3 pt}  \\
&{}=  a_1 \cdots a_n v' a_{i+k+1} b_{i+k+1}  b_{i+k+1-(k+1)} a_{i-1} b_{i+1-(k+1)}b_{i+1-2(k+1)} \cdots b_{i+1-q(k+1)} \vspace{3 pt}  \\
&{} \succ  a_1^2 (a_2 \cdots a_n)^2 v' c_{i+k} b_{i+1-(k+1)}b_{i+1-2(k+1)} \cdots b_{i+1-q(k+1)}  \vspace{3 pt} \\
&{} =  a_1^2 (a_2 \cdots a_n)^2 v' a_{i+2k} \cdots a_{i-2k} a_{i-k} b_{i+1-(k+1)}b_{i+1-2(k+1)} \cdots b_{i+1-q(k+1)} \vspace{3 pt}  \\
&{} =  a_1^2 (a_2 \cdots a_n)^2 v' a_{i+2k} \cdots a_{i-2k} a_{i-k}
b_{i-k} b_{i-k-(k+1)}\cdots b_{i-k-(q-1)(k+1)} \vspace{3 pt} \\
&{}= w_1'.
\end{array}
\end{equation*}
Applying a sequence of transformations $r$, we get
\begin{equation*}
\hspace{-12 pt}
\begin{array}{r@{}l}
w_2 &{} = a_1 \cdots a_n v'   a_1  \cdots  a_n  a_{i+2k} \cdots   a_{i-3k}a_{i-2k} c_{i-(k+1)} c_{i-2(k+1)} \cdots c_{i-q(k+1)} a_{i-k-q(k+1)} \vspace{3 pt}\\
&{} \succ    \ldots \succ a_1^2 (a_2 \cdots a_n)^2 v'a_{i+2k} \cdots   a_{i-3k}a_{i-2k} c_{i-(k+1)} c_{i-2(k+1)} \cdots c_{i-q(k+1)} a_{i-k-q(k+1)}  \vspace{3 pt}\\
&{} =   a_1^2 (a_2 \cdots a_n)^2 v'a_{i+2k} \cdots   a_{i-3k}a_{i-2k}
c_{i-k-1} c_{i-k-1-(k+1)} \cdots \vspace{3 pt} \\
&{} \hspace{210 pt} \cdots c_{i-k-1-(q-1)(k+1)} a_{i-k-1-k-(q-1)(k+1)} \vspace{3 pt} \\
&{}=  w_2'.
\end{array}
\end{equation*} \vspace{-10 pt} \\
By Lemma~\ref{joinn}, there exists $w_3\in \FM_n$ such that
$w_1',w_2'\geq w_3$.
\vspace{18 pt} \\
{\it Case 12:} Overlaps of the form $(\alpha, \theta)$, [$(\delta,
\iota)$]. We apply $s_{i,v,q}$ to the subword of type $\theta$ to
get $w_1$ and we apply $t_{i'}$ to the subword of type $\alpha$ to
get $w_2$. There are two kinds of overlap. We consider the first
kind of overlap, the second kind, as in {\it case 10}, can be solved
similarly. We have
\begin{equation*}
\begin{array}{r@{}l}
w &{}=   a_1 \cdots a_n v  a_i  b_{i}b_{i-(k+1)} \cdots b_{i-q(k+1)} a_{i-1 -q(k+1)} \vspace{ 3 pt} \\  
&{}= \overbrace{
   a_1 \cdots a_n v' 
\makebox[0pt][l]{$\displaystyle{\underbrace{\phantom{
  a_{i'} \cdots a_i \cdots a_{i'-1} 
} }_{\text{$\alpha$, $t_{i'}$: $w_2$ } } } $}   
a_{i'} \cdots a_i \cdots a_{i'-1} \cdots   a_{i-2}b_{i-(k+1)} b_{i-2(k+1)} \cdots b_{i-q(k+1)} a_{i-1 -q(k+1)}    
}^{\text{$\theta$, $s_{i,v,q}$: $w_1$}}  .  
\end{array}
\end{equation*}
\begin{equation*}
\begin{array}{r@{}l}
w_1&{}=a_1^2(a_2 \cdots a_n)^2 v' a_{i'} \cdots a_{i-1} c_{i+k-(k+1)} c_{i+k-2(k+1)} \cdots c_{i+k-q(k+1)} \vspace{3 pt}, \\
w_2&{}= a_1 \cdots a_n v'   a_1  \cdots  a_n   a_{i'} \cdots   a_{i-2} b_{i-(k+1)}b_{i-2(k+1)} \cdots b_{i-q(k+1)}  a_{i-1 -q(k+1)}.
\end{array}
\end{equation*}
We have
\begin{equation*}
\begin{array}{r@{}l}
 w_1&{}=a_1^2(a_2 \cdots a_n)^2 v' a_{i'}
\cdots a_{i-1} c_{i+k-(k+1)} c_{i+k-2(k+1)} \cdots c_{i+k-q(k+1)} \vspace{3 pt} \\
&{}=a_1^2(a_2 \cdots a_n)^2 v' a_{i'} \cdots a_{i-1} c_{i-1}
c_{i-1-(k+1)} \cdots c_{i-1-(q-1)(k+1)}.
\end{array}
\end{equation*}
Applying a
sequence of transformations $r$, we get
\begin{equation*}
\begin{array}{r@{}l}
w_2&{}=a_1 \cdots a_n v'   a_1  \cdots  a_n   a_{i'} \cdots   a_{i-2} b_{i-(k+1)}b_{i-2(k+1)} \cdots b_{i-q(k+1)}  a_{i-1 -q(k+1)} \vspace{3 pt} \\
&{} \succ  a_1^2 (a_2 \cdots a_n)^2 v'  a_{i'} \cdots a_{i-2} b_{i-(k+1)}b_{i-2(k+1)} \cdots b_{i-q(k+1)}  a_{i-1 -q(k+1)} \vspace{3 pt} \\
&{} =  a_1^2 (a_2 \cdots a_n)^2 v'  a_{i'} \cdots a_{i-2}
b_{i-k-1}b_{i-k-1-(k+1)} \cdots \vspace{3 pt} \\
&{} \hspace{200 pt} \cdots b_{i-k-1-(q-1)(k+1)}  a_{i-k-1-1 -(q-1)(k+1)} \vspace{3 pt}\\
 &{}=w_2'.
\end{array}
\end{equation*}
By Lemma~\ref{joinn}, there exists $w_3\in \FM_n$ such that
$w_1,w_2'\geq w_3$.
\vspace{18 pt} \\
{\it Case 13:} Overlaps of the form $(\alpha, \iota)$, [$(\delta,
\theta)$]. We apply $u_{i,v,q}$ to the subword of type $\iota$ to
get $w_1$ and we apply $t_{i+1}$ to the subword of type $\alpha$ to
get $w_2$. There are two kinds of overlap. We consider the first
kind of overlap, the second kind, as in {\it case 10}, can be solved
similarly. We have
\begin{equation*}
\begin{array}{r@{}l}
w &{}=  a_1 \cdots a_n v a_i c_{i}c_{i-(k+1)} \cdots c_{i-q(k+1)} a_{i-k -q(k+1)} \vspace{3 pt} \\  
&{}=   \overbrace{
   a_1 \cdots a_n v' 
\makebox[0pt][l]{$\displaystyle{\underbrace{\phantom{
 a_{i+1}a_{i+2}\cdots a_{i-1} a_i 
} }_{\text{$\alpha$, $t_{i+1}$: $w_2$ } } } $}   
a_{i+1}a_{i+2}\cdots a_{i-1} a_i c_{i}c_{i-(k+1)} \cdots c_{i-q(k+1)} a_{i-k -q(k+1)}   
}^{\text{$\iota $, $u_{i,v,q}$: $w_1$}}.  
\end{array}
\end{equation*}
\begin{equation*}
\begin{array}{r@{}l}
w_1&{}=a_1^2(a_2 \cdots a_n)^2 v' a_{i+1}a_{i+2}\cdots a_{i-1} b_{i+1-(k+1)} b_{i+1-2(k+1)} \cdots b_{i+1-q(k+1)}, \vspace{3 pt} \\
w_2&{}= a_1 \cdots a_n v'   a_1  \cdots  a_n   c_{i}c_{i-(k+1)} \cdots c_{i-q(k+1)} a_{i-k -q(k+1)}.
\end{array}
\end{equation*}
We have
\begin{equation*}
\begin{array}{r@{}l}
w_1&{}=a_1^2(a_2 \cdots a_n)^2 v' a_{i+1}a_{i+2}\cdots a_{i-1} b_{i+1-(k+1)} b_{i+1-2(k+1)} \cdots b_{i+1-q(k+1)} \vspace{3 pt} \\
&{}= a_1^2(a_2 \cdots a_n)^2 v' a_{i+1}b_{i+1} b_{i+1-(k+1)}
b_{i+1-2(k+1)} \cdots b_{i+1-q(k+1)} .
\end{array}
\end{equation*}
Applying a sequence of transformations $r$, we get
\begin{equation*}
\begin{array}{r@{}l}
w_2&{}= a_1 \cdots a_n v'a_1  \cdots  a_n c_{i}c_{i-(k+1)} \cdots c_{i-q(k+1)} a_{i-k -q(k+1)} \vspace{3 pt} \\
&{}\succ  a_1^2 (a_2 \cdots  a_n)^2v' c_{i}c_{i-(k+1)} \cdots
c_{i-q(k+1)} a_{i-k -q(k+1)} \vspace{3 pt} \\
&{}= w_2'.
\end{array}
\end{equation*}
By Lemma~\ref{joinn}, there exists $w_3\in \FM_n$ such that
$w_1,w_2'\geq w_3$.
\vspace{18 pt} \\
{\it Case 14:} Overlaps of the form $(\beta, \zeta)$. We apply $d_{i,v,q}$ to the subword of type $\zeta$ to get
$w_1$ and we apply $r_{j,m}$ to the subword of type $\beta$ to get
$w_2$. There are two kinds of overlap. We consider the first kind of
overlap, the second kind, as in {\it case 10}, can be solved
similarly. We have
\begin{equation*}
\begin{array}{r@{}l}
 w &{}= a_1 \cdots a_n v b_{i} b_{i-(k+1)}\cdots b_{i-q(k+1)} a_{i-1-q(k+1)} \vspace{3 pt} \\  
&{}=  \overbrace{
   a_1 \cdots a_n v' 
\makebox[0pt][l]{$\displaystyle{\underbrace{\phantom{
a_j a_1^{m-1}a_1 \cdots a_{i+1} \cdots a_n 
} }_{\text{$\beta$, $r_{j,m}$: $w_2$ } } } $}   
a_j a_1^{m-1}a_1 \cdots a_{i+1} \cdots a_n  \cdots  a_{i-2}  b_{i-(k+1)} b_{i-2(k+1)}\cdots b_{i-q(k+1)} a_{i-1-q(k+1)}   
}^{\text{$\zeta$, $d_{i,v,q}$: $w_1$}}.  
\end{array}
\end{equation*}
\begin{equation*}
\begin{array}{r@{}l}
w_1&{}=a_1 \cdots a_n v' a_j a_1^{m-1}a_1 \cdots a_{i}  a_{i+k} c_{i+k} c_{i+k-(k+1)} \cdots c_{i+k-q(k+1)}, \vspace{3 pt} \\
w_2&{}= a_1 \cdots a_n v'a_1 \cdots a_n a_j a_1^{m-1} a_1  \cdots a_{i-2}   b_{i-(k+1)} b_{i-2(k+1)}\cdots b_{i-q(k+1)} a_{i-1-q(k+1)}.
\end{array}
\end{equation*}
Applying $h_i$ and a sequence of transformations $r$, we get
\begin{equation*}
\begin{array}{r@{}l}
w_1&{}=a_1 \cdots a_n v'a_j a_1^{m-1}a_1 \cdots a_{i}  a_{i+k}
c_{i+k} c_{i+k-(k+1)} \cdots c_{i+k-q(k+1)} \vspace{3 pt} \\
&{} \succ  a_1 \cdots a_n v'a_j a_1^{m-1}a_1 \cdots a_{i-1} a_1 a_2 \cdots a_n c_{i+k-(k+1)} c_{i+k-2(k+1)} \cdots c_{i+k-q(k+1)} \vspace{3 pt}  \\
&{}\succ  \ldots \succ  a_1^2 (a_2 \cdots a_n)^2 v'a_j a_1^{m-1}a_1 \cdots a_{i-1} c_{i+k-(k+1)} c_{i+k-2(k+1)} \cdots c_{i+k-q(k+1)} \vspace{3 pt}  \\
&{}= a_1^2 (a_2 \cdots a_n)^2 v'a_j a_1^{m-1}a_1 \cdots a_{i-1}
c_{i-1} c_{i-1-(k+1)} \cdots c_{i-1-(q-1)(k+1)} \vspace{3 pt} \\
&{}= w_1'.
\end{array}
\end{equation*}
Applying a sequence of transformations $r$, we get
\begin{equation*}
\begin{array}{r@{}l}
w_2&{} =  a_1 \cdots a_n v'a_1 \cdots a_n a_j a_1^{m-1} a_1  \cdots  a_{i-2}   b_{i-(k+1)} b_{i-2(k+1)}\cdots b_{i-q(k+1)} a_{i-1-q(k+1)} \vspace{3 pt} \\
&{} \succ  a_1^2 (a_2 \cdots a_n)^2 v' a_j a_1^{m-1} a_1  \cdots  a_{i-2}   b_{i-(k+1)} b_{i-2(k+1)}\cdots b_{i-q(k+1)} a_{i-1-q(k+1)} \vspace{3 pt} \\
&{} = a_1^2 (a_2 \cdots a_n)^2 v' a_j a_1^{m-1} a_1  \cdots  a_{i-2}
b_{i-k-1} b_{i-k-1-(k+1)} \cdots \vspace{3 pt}  \\
&{} \hspace{200 pt} \cdots b_{i-k-1-(q-1)(k+1)} a_{i-k-1-1-(q-1)(k+1)} \vspace{3 pt} \\
&{}= w_2'.
\end{array}
\end{equation*}
By Lemma~\ref{joinn}, there exists $w_3\in \FM_n$ such that
$w_1',w_2'\geq w_3$.
\vspace{18 pt} \\
{\it Case 15:} Overlaps of the form $(\beta, \eta)$. We apply $e_{n-k,v,q}$ to the subword of type $\eta$ to
get $w_1$ and we apply $r_{j,m}$ to the subword of type $\beta$ to
get $w_2$. There are two kinds of overlap. We consider the first
kind of overlap, the second kind, as in {\it case 10}, can be solved
similarly. We have
\begin{equation*}
\begin{array}{r@{}l}
w &{}= a_1 \cdots a_n v c_{n-k} c_{n-k-(k+1)} \cdots c_{n-k-q(k+1)} a_{n-k-k-q(k+1)} \vspace{3 pt} \\  
&{}=   \overbrace{
   a_1 \cdots a_n v' 
\makebox[0pt][l]{$\displaystyle{\underbrace{\phantom{
a_j a_1^{m-1} a_1 \cdots a_{n-1}  a_{n} 
} }_{\text{$\beta$, $r_{j,m}$: $w_2$ } } } $}   
a_j a_1^{m-1} a_1 \cdots a_{n-1}  a_{n} a_{n+k} \cdots a_{n-3k} c_{n-k-(k+1)} c_{n-k-2(k+1)} \cdots c_{n-k-q(k+1)} a_{n-k-k-q(k+1)}  
}^{\text{$\eta$, $e_{n-k,v,q}$: $w_1$}}.  
\end{array}
\end{equation*}
\begin{equation*}
\begin{array}{r@{}l}
w_1&{}=a_1 \cdots a_n v' a_j a_1^{m-1} a_1 \cdots a_{n-1}  a_{n-k+1} b_{n-k+1} b_{n-k+1-(k+1)} \cdots b_{n-k+1-q(k+1)}, \vspace{3 pt}\\
w_2&{}= a_1 \cdots a_n v' a_1 \cdots a_n a_j a_1^{m-1} a_{n+k} \cdots
a_{n-3k} c_{n-k-(k+1)} c_{n-k-2(k+1)} \cdots \vspace{3 pt} \\
&{} \hspace{245 pt} \cdots c_{n-k-q(k+1)}a_{n-k-k-q(k+1)} .
\end{array}
\end{equation*}
Applying $s_{1,v'a_j a_1^{m-1},1}$, we get
\begin{equation*}
\begin{array}{r@{}l}
w_1&{}=a_1 \cdots a_n v' a_j a_1^{m-1} a_1 \cdots a_{n-1}  a_{n-k+1} b_{n-k+1} b_{n-k+1-(k+1)} \cdots b_{n-k+1-q(k+1)} \vspace{4 pt} \\
&{}= a_1 \cdots a_n v' a_j a_1^{m-1} a_1 b_{1}b_{1-(k+1)}a_{n-k-1} b_{n-k+1-(k+1)} b_{n-k+1-2(k+1)} \cdots b_{n-k+1-q(k+1)}   \vspace{4 pt}  \\
&{} \succ  a_1^2 (a_1 \cdots a_n)^2 v' a_j a_1^{m-1} c_{1+k-(k+1)} b_{n-k+1-(k+1)} b_{n-k+1-2(k+1)} \cdots b_{n-k+1-q(k+1)}  \vspace{4 pt}  \\
&{} =  a_1^2 (a_1 \cdots a_n)^2 v' a_j a_1^{m-1} a_{k} \cdots a_{-2k}  b_{n-k+1-(k+1)} b_{n-k+1-2(k+1)} \cdots b_{n-k+1-q(k+1)}  \vspace{4 pt}  \\
&{} =  a_1^2 (a_1 \cdots a_n)^2 v' a_j a_1^{m-1} a_{k} \cdots a_{-2k}  b_{-2k} b_{-2k-(k+1)} \cdots b_{-2k-(q-1)(k+1)} \vspace{3 pt} \\
&{}=w_1'.
\end{array}
\end{equation*}
Applying a sequence of transformations $r$, we get
\begin{equation*}
\begin{array}{r@{}l}
w_2&{}=a_1 \cdots a_n v' a_1 \cdots a_n a_j a_1^{m-1} a_{n+k} \cdots a_{n-3k} c_{n-k-(k+1)} c_{n-k-2(k+1)} \cdots \vspace{3 pt} \\
&{} \hspace{250 pt} \cdots c_{n-k-q(k+1)} a_{n-k-k-q(k+1)}  \vspace{3 pt} \\
&{} \succ   \ldots   \succ a_1^2(a_2 \cdots a_n)^2 v' a_j a_1^{m-1} a_{n+k} \cdots a_{n-3k} c_{n-k-(k+1)} c_{n-k-2(k+1)} \cdots \vspace{3 pt} \\
&{} \hspace{250 pt} \cdots c_{n-k-q(k+1)} a_{n-k-k-q(k+1)} \vspace{3 pt} \\
&{}= a_1^2(a_2 \cdots a_n)^2 v' a_j a_1^{m-1} a_{n+k} \cdots a_{n-3k}
c_{n-2k-1} c_{n-2k-1-(k+1)} \cdots \vspace{3 pt} \\
&{} \hspace{185 pt} \cdots c_{n-2k-1-(q-1)(k+1)}
a_{n-2k-1-k-(q-1)(k+1)} \vspace{3 pt} \\
&{}=w_2'.
\end{array}
\end{equation*}
By Lemma~\ref{joinn}, there exists $w_3\in \FM_n$ such that
$w_1',w_2'\geq w_3$.
\vspace{18 pt} \\
{\it Case 16:} Overlaps of the form $(\beta, \theta)$. We apply $s_{i,v,q}$ to the subword of type $\theta$ to
get $w_1$ and we apply $r_{j,m}$ to the subword of type $\beta$ to
get $w_2$. There are two kinds of overlap. We consider the first
kind of overlap, the second kind, as in {\it case 10}, can be solved
similarly. We have
\begin{equation*}
\hspace{-5 pt}
\begin{array}{r@{}l}
w &{}= a_1 \cdots a_n v  a_i  b_{i} b_{i-(k+1)} \cdots b_{i-q(k+1)} a_{i-1 -q(k+1)} \vspace{3 pt} \\  
&{}=   \overbrace{
   a_1 \cdots a_n v' 
\makebox[0pt][l]{$\displaystyle{\underbrace{\phantom{
 a_ja_1^{m-1}a_1 \cdots a_{i } \cdots a_n  
} }_{\text{$\beta$, $r_{j,m}$: $w_2$ } } } $}   
 a_ja_1^{m-1}a_1 \cdots a_{i } \cdots a_n \cdots a_{i -2}  b_{i-(k+1)} b_{i-2(k+1)} \cdots b_{i-q(k+1)} a_{i-1 -q(k+1)}    
}^{\text{$\theta$, $s_{i,v,q}$: $w_1$}}.  
\end{array}
\end{equation*}
\begin{equation*}
\hspace{-5 pt}
\begin{array}{r@{}l}
w_1&{}=a_1^2 (a_2 \cdots a_n)^2 v'  a_ja_1^{m-1}a_1 \cdots a_{i-1} c_{i+k-(k+1)} c_{i+k-2(k+1)} \cdots c_{i+k-q(k+1)}, \vspace{3 pt} \\
w_2&{}= a_1 \cdots a_n v' a_1 \cdots a_n a_j a_1^{m-1}  a_{n+1} \cdots
a_{i -2}  b_{i-(k+1)} b_{i-2(k+1)} \cdots b_{i-q(k+1)} a_{i-1-q(k+1)} .
\end{array}
\end{equation*}
We have
\begin{equation*}
\begin{array}{r@{}l}
w_1&{}=a_1^2 (a_2 \cdots a_n)^2 v'  a_ja_1^{m-1}a_1 \cdots a_{i-1} c_{i+k-(k+1)} c_{i+k-2(k+1)} \cdots c_{i+k-q(k+1)} \vspace{3 pt}  \\
&{}=a_1^2 (a_2 \cdots a_n)^2 v'  a_ja_1^{m-1}a_1 \cdots a_{i-1}
c_{i-1} c_{i-1-(k+1)} \cdots c_{i-1-(q-1)(k+1)}.
\end{array}
\end{equation*}
Applying a sequence of transformations $r$, we get
\begin{equation*}
\hspace{-15 pt}
\begin{array}{r@{}l}
w_2&{}= a_1 \cdots a_n v' a_1 \cdots a_n a_j a_1^{m-1}  a_{n+1} \cdots a_{i -2}  b_{i-(k+1)} b_{i-2(k+1)} \cdots b_{i-q(k+1)} a_{i-1 -q(k+1)}. \vspace{3 pt} \\
&{}\succ \ldots \succ a_1^2 (a_2 \cdots a_n)^2 v'a_j a_1^{m-1}  a_{n+1} \cdots a_{i -2}  b_{i-(k+1)} b_{i-2(k+1)} \cdots b_{i-q(k+1)} a_{i-1 -q(k+1)} \vspace{3 pt} \\
&{} = a_1^2 (a_2 \cdots a_n)^2 v'a_j a_1^{m-1}  a_{n+1} \cdots a_{i
-2}  b_{i-k-1} b_{i-k-1-(k+1)} \cdots \vspace{3 pt} \\
&{} \hspace{ 213 pt} \cdots b_{i-k-1-(q-1)(k+1)}
a_{i-k-1-1 -(q-1)(k+1)} \vspace{3 pt}\\
&{}=w_2'.
\end{array}
\end{equation*}
By Lemma~\ref{joinn}, there exists $w_3\in \FM_n$ such that
$w_1,w_2'\geq w_3$.
\vspace{18 pt} \\
{\it Case 17:} Overlaps of the form $(\beta, \iota)$. We apply $u_{n,v,q}$ to the subword of type $\iota$ to get
$w_1$ and we apply $r_{j,m}$ to the subword of type $\beta$ to get
$w_2$. There are two kinds of overlap. We consider the first kind of
overlap, the second kind, as in {\it case 10}, can be solved
similarly. We have
\begin{equation*}
\begin{array}{r@{}l}
w &{}= a_1 \cdots a_n v  a_n  c_{n} c_{n-(k+1)} \cdots c_{n-q(k+1)}a_{n-k -q(k+1)} \vspace{3 pt} \\  
&{}=    \overbrace{
   a_1 \cdots a_n v' 
\makebox[0pt][l]{$\displaystyle{\underbrace{\phantom{
a_j a_1^{m-1} a_1 \cdots a_n  
} }_{\text{$\beta$, $r_{j,m}$: $w_2$ } } } $}   
a_j a_1^{m-1} a_1 \cdots a_n   c_{n} c_{n-(k+1)} \cdots c_{n-q(k+1)}a_{n-k -q(k+1)}   
}^{\text{$\iota$, $u_{n,v,q}$: $w_1$}}.  
\end{array}
\end{equation*}
\begin{equation*}
\begin{array}{r@{}l}
w_1&{}=a_1^2 (a_2 \cdots a_n)^2 v'  a_ja_1^{m-1}a_1 \cdots a_{n-1} b_{n+1-(k+1)} b_{n+1-2(k+1)} \cdots b_{n+1-q(k+1)}  \vspace{3 pt} \\
w_2&{}= a_1 \cdots a_n v' a_1 \cdots a_n a_j a_1^{m-1}  c_{n} c_{n-(k+1)} \cdots c_{n-q(k+1)}a_{n-k -q(k+1)}.
\end{array}
\end{equation*}
We have
\begin{equation*}
\begin{array}{r@{}l}
w_1&{}=a_1^2 (a_2 \cdots a_n)^2 v'
a_ja_1^{m-1}a_1 \cdots a_{n-1} b_{n+1-(k+1)} b_{n+1-2(k+1)} \cdots
b_{n+1-q(k+1)}  \vspace{3 pt} \\
&{}=a_1^2 (a_2 \cdots a_n)^2 v'  a_ja_1^{m-1}a_1 b_{n+1}
b_{n+1-(k+1)} b_{n+1-2(k+1)} \cdots b_{n+1-q(k+1)} .
\end{array}
\end{equation*}
Applying a sequence of transformations $r$ followed by
$e_{n,a_2\cdots a_n v'a_j a_1^{m-1} ,q}$, we get
\begin{equation*}
\begin{array}{r@{}l}
w_2&{}= a_1 \cdots a_n v' a_1 \cdots a_n a_j a_1^{m-1}  c_{n} c_{n-(k+1)} \cdots c_{n-q(k+1)}a_{n-k -q(k+1)}  \vspace{3 pt} \\
&{}\succ  \ldots \succ  a_1^2 (a_2 \cdots a_n)^2 v' a_j a_1^{m-1}  c_{n} c_{n-(k+1)} \cdots c_{n-q(k+1)}a_{n-k -q(k+1)} \vspace{3 pt} \\
&{}\succ  a_1^2 (a_2 \cdots a_n)^2 v'  a_ja_1^{m-1}a_1 b_{n+1}
b_{n+1-(k+1)} b_{n+1-2(k+1)} \cdots b_{n+1-q(k+1)} \vspace{3 pt} \\
&{}= w_1 .
\end{array}
\end{equation*}
\vspace{0 pt} \\
{\it Case 18:} Overlaps of the form $(\zeta, \zeta)$, [$(\eta,
\eta)$]. We apply $d_{i,v,q}$ to a subword of type $\zeta$ to get
$w_1$, and we apply $d_{i',v',q'}$ to the other subword of type
$\zeta$ to get $w_2$. There are two kinds of overlap. We consider
the first kind of overlap, the second kind, as in {\it case 10}, can
be solved similarly. We have
\begingroup
\fontsize{10.5 pt}{11pt}\selectfont
\begin{equation*}
\begin{array}{r@{}l}
w &{}= a_1 \cdots a_n v  b_{i} b_{i-(k+1)}  \cdots b_{i-q(k+1)}  a_{i-1-q(k+1)} \vspace{3 pt} \\  
&{}=   \overbrace{
\makebox[0pt][l]{$\displaystyle{\underbrace{\phantom{
a_1 \cdots a_n v' b_{i'}   \cdots b_{i'-(q'-1)(k+1)}  a_{i'+1-q'(k+1)} \cdots a_{i+1}  \cdots a_{i'-1-q'(k+1)}   
} }_{\text{$\zeta$, $d_{i',v',q'}$: $w_2$ } } } $}   
a_1 \cdots a_n v' b_{i'}   \cdots b_{i'-(q'-1)(k+1)}
a_{i'+1-q'(k+1)} \cdots a_{i+1}  \cdots a_{i'-1-q'(k+1)}   \cdots
a_{i-2}
 b_{i-(k+1)}  \cdots b_{i-q(k+1)}  a_{i-1-q(k+1)}   
}^{\text{$\zeta$, $d_{i,v,q}$: $w_1$}}  .  
\end{array}
\end{equation*}
\endgroup
Without loss of generality, it is assumed that the subwords $a_1 \cdots a_n$ at the beginning of
the two words of type $\zeta$ coincide. We have
\begin{equation*}
\hspace{-15 pt}
\begin{array}{r@{}l}
w_1 &{} = a_1 \cdots a_n v' b_{i'}   \cdots b_{i'-(q'-1)(k+1)}  a_{i'+1-q'(k+1)} \cdots a_{i} a_{i+k} c_{i+k} c_{i+k-(k+1)} \cdots c_{i+k-q(k+1)}, \vspace{3 pt}\\ 
w_2 &{} = a_1 \cdots a_n v' a_{i'+k} c_{i'+k} c_{i'+k-(k+1)} \cdots
c_{i'+k-q'(k+1)} \cdot    \vspace{3 pt}  \\
&{} \hspace{155 pt}  \cdot a_{i'-q'(k+1)}  \cdots a_{i-2} b_{i-(k+1)}
\cdots b_{i-q(k+1)}  a_{i-1-q(k+1)}.
\end{array}
\end{equation*}
Applying $h_i$ and a sequence of transformations $r$, we get
\begin{equation*}
\hspace{-12 pt}
\begin{array}{r@{}l}
w_1&{} =  a_1 \cdots a_n v' b_{i'}  \cdots b_{i'-(q'-1)(k+1)}  a_{i'+1-q'(k+1)} \cdots a_{i} a_{i+k} c_{i+k} c_{i+k-(k+1)} \cdots c_{i+k-q(k+1)}  \vspace{3 pt} \\
&{} =  a_1 \cdots a_n v' b_{i'}   \cdots b_{i'-(q'-1)(k+1)}  a_{i'+1-q'(k+1)} \cdots a_{i} a_{i+k} \cdots a_{i-k} \cdot \vspace{3 pt} \\
&{} \hspace{223 pt} \cdot  c_{i+k-(k+1)}c_{i+k-2(k+1)} \cdots c_{i+k-q(k+1)}  \vspace{3 pt} \\
&{} \succ  a_1 \cdots a_n v' b_{i'}   \cdots b_{i'-(q'-1)(k+1)}  a_{i'+1-q'(k+1)} \cdots a_{i-1} a_1 \cdots a_n \cdot \vspace{3 pt} \\
&{} \hspace{223 pt} \cdot c_{i+k-(k+1)}c_{i+k-2(k+1)} \cdots c_{i+k-q(k+1)}  \vspace{3 pt} \\
&{} \succ  \ldots \succ a_1^2 (a_2 \cdots a_n)^2 v' b_{i'}   \cdots b_{i'-(q'-1)(k+1)}  a_{i'+1-q'(k+1)} \cdots a_{i-1} \cdot \vspace{3 pt} \\
&{} \hspace{223 pt} \cdot c_{i+k-(k+1)}c_{i+k-2(k+1)} \cdots c_{i+k-q(k+1)}  \vspace{3 pt} \\
&{}=  a_1^2 (a_2 \cdots a_n)^2 v' b_{i'}  \cdots b_{i'-(q'-1)(k+1)} a_{i'+1-q'(k+1)} \cdots a_{i-1} \cdot \vspace{3 pt} \\
&{} \hspace{223 pt} \cdot c_{i-1}c_{i-1-(k+1)} \cdots c_{i-1-(q-1)(k+1)} \vspace{3 pt} \\
&{}= w_1'.
\end{array}
\end{equation*}
Applying $u_{i'+k,v',q'}$, we get
\begin{equation*}
\begin{array}{r@{}l}
w_2 &{}= a_1 \cdots a_n v' a_{i'+k} c_{i'+k} c_{i'+k-(k+1)} \cdots c_{i'+k-q'(k+1)} a_{i'-q'(k+1)}   \cdots   a_{i-2} \cdot \vspace{3 pt} \\
&{}\hspace{230 pt} \cdot  b_{i-(k+1)} \cdots b_{i-q(k+1)}  a_{i-1-q(k+1)} \vspace{3 pt}  \\
&{}= a_1 \cdots a_n v' a_{i'+k} c_{i'+k} c_{i'+k-(k+1)} \cdots c_{i'+k-q'(k+1)} a_{i'+k-k-q'(k+1)} \cdots   a_{i-2} \cdot \vspace{3 pt} \\
&{}\hspace{230 pt} \cdot b_{i-(k+1)}  \cdots b_{i-q(k+1)}  a_{i-1-q(k+1)} \vspace{3 pt} \\
&{}\succ  a_1^2 (a_2 \cdots a_n)^2 v' b_{i'+k+1-(k+1)}  \cdots b_{i'+k+1-q'(k+1)} a_{i'+1-q'(k+1)} \cdots   a_{i-2} \cdot \vspace{3 pt} \\
&{}\hspace{230 pt} \cdot b_{i-(k+1)}  \cdots b_{i-q(k+1)}  a_{i-1-q(k+1)} \vspace{3 pt}   \\
&{} =  a_1^2 (a_2 \cdots a_n)^2 v' b_{i'}  \cdots b_{i'-(q'-1)(k+1)} a_{i'+1-q'(k+1)} \cdots   a_{i-2} \cdot \vspace{3 pt} \\
&{}\hspace{230 pt} \cdot b_{i-(k+1)}  \cdots b_{i-q(k+1)}  a_{i-1-q(k+1)}     \vspace{3 pt}     \\
&{} =  a_1^2 (a_2 \cdots a_n)^2 v' b_{i'} \cdots b_{i'-(q'-1)(k+1)} a_{i'+1-q'(k+1)} \cdots  a_{i-2} \cdot \vspace{3 pt} \\
&{}\hspace{157 pt} \cdot b_{i-k-1} \cdots b_{i-k-1-(q-1)(k+1)} a_{i-k-1-1-(q-1)(k+1)} \vspace{3 pt} \\
&{}=  w_2'.
\end{array}
\end{equation*}
By Lemma~\ref{joinn}, there exists $w_3\in \FM_n$ such that
$w_1',w_2'\geq w_3$.
\vspace{18 pt} \\
{\it Case 19:} Overlaps of the form $(\zeta, \eta)$, [$(\eta,
\zeta)$]. We apply $e_{i,v,q}$ to the subword of type $\eta$ to get
$w_1$ and we apply $d_{ i+(q'+1)(k+1),v',q'}$ to the subword of type
$\zeta$ to get $w_2$. There are two kinds of overlap. We consider
the first kind of overlap, the second kind, as in {\it case 10}, can
be solved similarly. We have
\begin{equation*}
\begin{array}{r@{}l}
w &{}= a_1 \cdots a_n v c_{i} c_{i-(k+1)} \cdots c_{i-q(k+1)} a_{i-k-q(k+1)} \vspace{3 pt} \\  
&{}=  \overbrace{
\makebox[0pt][l]{$\displaystyle{\underbrace{\phantom{
a_1 \cdots a_n v' b_{i+(q'+1)(k+1)} \cdots b_{i+(q'+1)(k+1)-q'(k+1)} a_{i+k}  
} }_{\text{$\zeta$, $d_{ i+(q'+1)(k+1),v',q'}$: $w_2$ } } } $}   
a_1 \cdots a_n v' b_{i+(q'+1)(k+1)} \cdots b_{i+(q'+1)(k+1)-q'(k+1)} a_{i+k} \cdots a_{i-2k}  c_{i-(k+1)} \cdots c_{i-q(k+1)} a_{i-k-q(k+1)}   
}^{\text{$\eta$, $e_{i,v,q}$: $w_1$}}  .  
\end{array}
\end{equation*}
Without loss of generality, it is assumed that the subwords $a_1 \cdots a_n$ at the beginning of
the word of type $\eta$ and at the beginning of the word of type $\zeta$ coincide. We have
\begin{equation*}
\begin{array}{r@{}l}
w_1 &{}= a_1 \cdots a_n v' b_{i+(q'+1)(k+1)} \cdots
b_{i+(q'+1)(k+1)-q'(k+1)} a_{i+1} b_{i+1}  \cdots b_{i+1-q(k+1)}, \vspace{3 pt} \\
 w_2 &{}=  a_1 \cdots a_n v' a_{i+(q'+1)(k+1)+k}
c_{i+(q'+1)(k+1)+k}\cdots c_{i+(q'+1)(k+1)+k-q'(k+1)} \cdot \vspace{3 pt}  \\
&{} \hspace{155 pt} \cdot a_{i+2k} \cdots a_{i-2k} c_{i-(k+1)} \cdots c_{i-q(k+1)}
a_{i-k-q(k+1)}.
\end{array}
\end{equation*}
Applying $d_{
i+(q'+1)(k+1),v',q'+1}$, we get
\begin{equation*}
\begin{array}{r@{}l}
w_1 &{}= a_1 \cdots a_n v' b_{i+(q'+1)(k+1)} \cdots b_{i+(q'+1)(k+1)-q'(k+1)} a_{i+1}   b_{i+1}  \cdots b_{i+1-q(k+1)} \vspace{3 pt}  \\
&{}= a_1 \cdots a_n v' b_{i+(q'+1)(k+1)} \cdots b_{i+(q'+1)(k+1)-q'(k+1)} a_{i+1}a_{i+2}\cdots a_{i-1}  \cdot \vspace{3 pt} \\
&{}\hspace{260 pt} \cdot  b_{i+1-(k+1)}  \cdots b_{i+1-q(k+1)} \vspace{3 pt}  \\
&{}= a_1 \cdots a_n v' b_{i+(q'+1)(k+1)} \cdots b_{i+(q'+1)(k+1)-q'(k+1)} b_{i+(q'+1)(k+1)-(q'+1)(k+1)} a_{i-1}  \cdot \vspace{3 pt} \\
&{}\hspace{260 pt} \cdot  b_{i+1-(k+1)}  \cdots b_{i+1-q(k+1)} \vspace{3 pt}  \\
&{} \succ  a_1 \cdots a_n v' a_{i+(q'+1)(k+1)+k}  c_{i+(q'+1)(k+1)+k}\cdots c_{i+(q'+1)(k+1)+k-(q'+1)(k+1)}  \cdot \vspace{3 pt} \\
&{}\hspace{260 pt} \cdot b_{i+1-(k+1)}  \cdots b_{i+1-q(k+1)} \vspace{3 pt}  \\
&{}= a_1 \cdots a_n v' a_{i+(q'+1)(k+1)+k}  c_{i+(q'+1)(k+1)+k}\cdots c_{i+(q'+1)(k+1)+k-q'(k+1)} \cdot \vspace{3 pt}  \\
&{}\hspace{183 pt} \cdot   a_{i+2k} \cdots a_{i-2k} a_{i-k}    b_{i-k}\cdots b_{i-k-(q-1)(k+1)} \vspace{3 pt} \\
&{}=w_1'.
\end{array}
\end{equation*}
We have
\begin{equation*}
\begin{array}{r@{}l}
w_2&{} = a_1 \cdots a_n v' a_{i+(q'+1)(k+1)+k}  c_{i+(q'+1)(k+1)+k}\cdots c_{i+(q'+1)(k+1)+k-q'(k+1)}  \cdot \vspace{3 pt} \\
&{}\hspace{128 pt} \cdot a_{i+2k} \cdots a_{i-2k} c_{i-(k+1)} \cdots c_{i-q(k+1)} a_{i-k-q(k+1)} \vspace{3 pt}\\
 &{}= a_1 \cdots a_n v' a_{i+(q'+1)(k+1)+k}c_{i+(q'+1)(k+1)+k} \cdots c_{i+(q'+1)(k+1)+k-q'(k+1)}  \cdot \vspace{3 pt} \\
 &{}\hspace{128 pt} \cdot a_{i+2k} \cdots a_{i-2k}  c_{i-k-1} \cdots c_{i-k-1-(q-1)(k+1)} a_{i-k-q(k+1)}.
\end{array}
\end{equation*}
By Lemma~\ref{joinn}, there exists $w_3\in \FM_n$ such that
$w_1',w_2\geq w_3$.
\vspace{18 pt} \\
 {\it Case 20:} Overlaps of the form $(\zeta,
\theta)$, [$(\eta, \iota)$]. We apply $s_{i,v,q}$ to the subword of
type $\theta$ to get $w_1$ and we apply $d_{ i',v',q'}$ to the
subword of type $\zeta$ to get $w_2$. There are two kinds of
overlap. We consider the first kind of overlap, the second kind, as
in {\it case 10}, can be solved similarly. We have
\begingroup
\fontsize{11 pt}{11pt}\selectfont
\begin{equation*}
\begin{array}{r@{}l}
w &{}= a_1 \cdots a_n v   a_i  b_{i}\cdots b_{i-q(k+1)}  a_{i-1-q(k+1)} \vspace{3 pt} \\  
&{}=  \overbrace{
\makebox[0pt][l]{$\displaystyle{\underbrace{\phantom{
a_1 \cdots a_n v' b_{i'} \cdots b_{i'-(q'-1)(k+1)}  a_{i'+1 -q'(k+1)} \cdots a_i \cdots a_{i'-1 -q'(k+1)}  
} }_{\text{$\zeta$, $d_{ i',v',q'}$: $w_2$ } } } $}   
a_1 \cdots a_n v' b_{i'} \cdots b_{i'-(q'-1)(k+1)}  a_{i'+1 -q'(k+1)} \cdots a_i \cdots a_{i'-1 -q'(k+1)} \cdots a_{i -2} b_{i-(k+1)} \cdots b_{i-q(k+1)} a_{i-1 -q(k+1)}   
}^{\text{$\theta$, $s_{i,v,q}$: $w_1$}} .  
\end{array}
\end{equation*}
\endgroup
Without loss of generality, it is assumed that the subwords $a_1 \cdots a_n$ at the beginning of
the word of type $\theta$ and at the beginning of the word of type $\zeta$ coincide. We have
\begin{equation*}
\hspace{-9 pt}
\begin{array}{r@{}l}
w_1 &{} = a_1^2(a_2 \cdots a_n)^2 v' b_{i'} \cdots b_{i'-(q'-1)(k+1)}  a_{i'+1 -q'(k+1)} \cdots a_{i-1} c_{i+k-(k+1)}  \cdots c_{i+k-q(k+1)},   \vspace{3 pt}  \\
w_2&{} = a_1 \cdots a_n v' a_{i'+k} c_{i'+k} \cdots c_{i'+k-q'(k+1)} a_{i'-q'(k+1)} \cdots a_{i -2} \cdot \vspace{3 pt}  \\
&{} \hspace{230 pt} \cdot b_{i-(k+1)} \cdots b_{i-q(k+1)} a_{i-1 -q(k+1)}.
\end{array}
\end{equation*}
\vspace{-6 pt} \\
We have
\begin{equation*}
\hspace{-9 pt}
\begin{array}{r@{}l}
w_1 &{}= a_1^2(a_2 \cdots a_n)^2 v' b_{i'} \cdots b_{i'-(q'-1)(k+1)}  a_{i'+1 -q'(k+1)} \cdots a_{i-1} c_{i+k-(k+1)}  \cdots c_{i+k-q(k+1)}   \vspace{3 pt}  \\
&{}=a_1^2(a_2 \cdots a_n)^2 v' b_{i'} \cdots b_{i'-(q'-1)(k+1)}
a_{i'+1 -q'(k+1)} \cdots a_{i-1} c_{i-1}  \cdots c_{i-1-(q-1)(k+1)}.
\end{array}
\end{equation*}
Applying $u_{ i'+k,v',q'}$, we get
\begin{equation*}
\begin{array}{r@{}l}
w_2&{}= a_1 \cdots a_n v' a_{i'+k} c_{i'+k} \cdots c_{i'+k-q'(k+1)}  a_{i'-q'(k+1)} \cdots a_{i -2} \cdot \vspace{3 pt} \\
&{}\hspace{230 pt} \cdot  b_{i-(k+1)} \cdots b_{i-q(k+1)} a_{i-1 -q(k+1)} \vspace{3 pt} \\
&{}= a_1 \cdots a_n v' a_{i'+k} c_{i'+k} \cdots c_{i'+k-q'(k+1)}  a_{i'+k-k   -q'(k+1)} \cdots a_{i -2} \cdot  \vspace{3 pt} \\
&{}\hspace{230 pt} \cdot  b_{i-(k+1)} \cdots b_{i-q(k+1)} a_{i-1 -q(k+1)} \vspace{3 pt} \\
&{}\succ  a_1^2 (a_2 \cdots a_n)^2 v'  b_{i'+k+1-(k+1)} \cdots b_{i'+k+1-q'(k+1)}  a_{i'+1-q'(k+1)}  \cdot  \cdots a_{i -2}\cdot \vspace{3 pt} \\
&{}\hspace{230 pt} \cdot b_{i-(k+1)} \cdots b_{i-q(k+1)} a_{i-1 -q(k+1)} \vspace{3 pt} \\
&{}= a_1^2 (a_2 \cdots a_n)^2 v'  b_{i'} \cdots b_{i'-(q'-1)(k+1)}a_{i'+1-q'(k+1)} \cdots a_{i -2}  \cdot \vspace{3 pt} \\
&{}\hspace{200 pt} \cdot b_{i-k-1}\cdots b_{i-k-1-(q-1)(k+1)} a_{i-1 -q(k+1)} \vspace{3 pt} \\
&{}=w_2'.
\end{array}
\end{equation*}
By Lemma~\ref{joinn}, there exists $w_3\in \FM_n$ such that
$w_1,w_2'\geq w_3$.
\vspace{18 pt} \\
{\it Case 21:} Overlaps of the form $(\zeta, \iota)$, [$(\eta,
\theta)$]. We apply $u_{i,v,q}$ to the subword of type $\iota$ to
get $w_1$ and we apply $d_{ i+q'(k+1)+1,v',q'}$ to the subword of
type $\zeta$ to get $w_2$. There are two kinds of overlap. We
consider the first kind of overlap, the second kind, as in {\it case
10}, can be solved similarly. We have
\begin{equation*}
\hspace{-5 pt}
\begin{array}{r@{}l}
w &={} a_1 \cdots a_n v a_i c_{i} \cdots c_{i-q(k+1)} a_{i-k -q(k+1)} \vspace{3 pt}\\  
&={} \overbrace{
\makebox[0pt][l]{$\displaystyle{\underbrace{\phantom{
a_1 \cdots a_n v'  b_{i+q'(k+1)+1}   \cdots  b_{i+q'(k+1)+1-q'(k+1)} a_i  
} }_{\text{$\zeta$, $d_{ i+q'(k+1)+1,v',q'}$: $w_2$ } } } $}   
a_1 \cdots a_n v'  b_{i+q'(k+1)+1}   \cdots  b_{i+q'(k+1)+1-q'(k+1)} a_i   c_{i}c_{i-(k+1)} \cdots c_{i-q(k+1)} a_{i-k -q(k+1)}  
}^{\text{$\iota$, $u_{i,v,q}$: $w_1$}} .  
\end{array}
\end{equation*}
Without loss of generality, it is assumed that the subwords $a_1 \cdots a_n$ at the beginning of
the word of type $\iota$ and at the beginning of the word of type $\zeta$ coincide. We have
\begin{equation*}
\begin{array}{r@{}l}
 w_1 &{} = a_1^2 (a_2 \cdots a_n)^2 v'  b_{i+q'(k+1)+1}   \cdots  b_{i+q'(k+1)+1-q'(k+1)} b_{i+1-(k+1)} \cdots b_{i+1-q(k+1)}, \vspace{3 pt}  \\
 w_2 &{} = a_1 \cdots a_n v'  a_{i+q'(k+1)+1+k} c_{i+q'(k+1)+1+k}
 \cdots  c_{i+q'(k+1)+1+k-q'(k+1)}\cdot \vspace{3 pt} \\
&{} \hspace{255 pt} \cdot c_{i} \cdots c_{i-q(k+1)} a_{i-k -q(k+1)}.
\end{array}
\end{equation*}
We have
\begin{equation*}
\begin{array}{r@{}l}
w_1&{}= a_1^2 (a_2 \cdots a_n)^2 v'  b_{i+q'(k+1)+1}   \cdots  b_{i+q'(k+1)+1-q'(k+1)} b_{i+1-(k+1)} \cdots b_{i+1-q(k+1)} \vspace{3 pt} \\
&{}= a_1^2 (a_2 \cdots a_n)^2 v'  b_{i+(q'+1)(k+1)+1-(k+1)}
 \cdots  b_{i+(q'+1)(k+1)+1-(q'+1)(k+1)} \cdots \vspace{3 pt} \\
&{} \hspace{230 pt} \cdots b_{i+(q'+1)(k+1)+1-(q'+1+q)(k+1)}.
\end{array}
\end{equation*}
Applying $u_{ i+(q'+1)(k+1),v',q'+1+q}$, we get
\begin{equation*}
\begin{array}{r@{}l}
w_2 &{}= a_1 \cdots a_n v'a_{i+q'(k+1)+1+k} c_{i+q'(k+1)+1+k}   \cdots  c_{i+q'(k+1)+1+k-q'(k+1)}    c_{i}\cdot   \vspace{3 pt} \\
&{}\hspace{193 pt}\cdot c_{i-(k+1)} \cdots c_{i-q(k+1)} a_{i-k -q(k+1)}                  \vspace{3 pt}     \\
&{}= a_1 \cdots a_n v'a_{i+(q'+1)(k+1)} c_{i+(q'+1)(k+1)}   \cdots  c_{i+(q'+1)(k+1)-q'(k+1)}     \cdots  \vspace{3 pt}      \\
&{}\hspace{193 pt} \cdots     c_{i+(q'+1)(k+1)-(q'+1+q)(k+1)} a_{i-k -q(k+1)}               \vspace{3 pt}     \\
&{}\succ  a_1^2 (a_2 \cdots a_n)^2 v'  b_{i+(q'+1)(k+1)+1-(k+1)} \cdots b_{i+(q'+1)(k+1)+1-(q'+1)(k+1)} \cdots         \vspace{3 pt}     \\
&{} \hspace{193 pt} \cdots b_{i+(q'+1)(k+1)+1-(q'+1+q)(k+1)}    \vspace{3 pt}     \\
 &{}= w_1.
\end{array}
\end{equation*}
\vspace{0 pt} \\
{\it Case 22:} Overlaps of the form $(\theta, \zeta)$, [$(\iota,
\eta)$]. We apply $d_{i,v,q}$ to the subword of type $\zeta$ to get
$w_1$ and we apply $s_{i',v',q'}$ to the subword of type $\theta$ to
get $w_2$. There are two kinds of overlap. We consider the first
kind of overlap, the second kind, as in {\it case 10}, can be solved
similarly. Let $w_0=a_1\cdots a_n$.  We have
\begingroup
\fontsize{11 pt}{11pt}\selectfont
\begin{equation*}
\begin{array}{r@{}l}
w &{}=  a_1 \cdots a_n v  b_{i} \cdots b_{i-q(k+1)} a_{i-1 -q(k+1)} \vspace{3 pt} \\  
&{}= \overbrace{
\makebox[0pt][l]{$\displaystyle{\underbrace{\phantom{
w_0 v' a_{i'} b_{i'} \cdots b_{i'-(q'-1)(k+1)}a_{i'+1 -q'(k+1)} \cdots a_{i+1} \cdots a_{i'-1 -q'(k+1)} 
} }_{\text{$\theta$, $s_{i',v',q'}$: $w_2$ } } } $}   
w_0 v' a_{i'} b_{i'} \cdots b_{i'-(q'-1)(k+1)}a_{i'+1 -q'(k+1)} \cdots a_{i+1} \cdots a_{i'-1 -q'(k+1)}  \cdots  a_{i-2}b_{i-(k+1)}\cdots b_{i-q(k+1)} a_{i-1 -q(k+1)} 
}^{\text{$\zeta$, $d_{i,v,q}$: $w_1$}} .  
\end{array}
\end{equation*}
\endgroup
Without loss of generality, it is assumed that the subwords $w_0=a_1 \cdots a_n$ at the beginning of
the word of type $\zeta$ and at the beginning of the word of type $\theta$ coincide. We have
\begin{equation*}
\begin{array}{r@{}l}
 w_1 &{}= a_1 \cdots a_n v'  a_{i'}  b_{i'} \cdots b_{i'-(q'-1)(k+1)} a_{i'+1 -q'(k+1)} \cdots a_{i} a_{i+k} c_{i+k}  \cdots c_{i+k-q(k+1)}, \vspace{3 pt} \\
 w_2 &{}= a_1^2 (a_2 \cdots a_n)^2 v'  c_{i'+k-(k+1)} \cdots
c_{i'+k-q'(k+1)} a_{i'-q'(k+1)} \cdots  a_{i -2} \cdot \vspace{3 pt} \\
&{} \hspace{230 pt} \cdot b_{i-(k+1)} \cdots b_{i-q(k+1)}  a_{i-1 -q(k+1)}.
\end{array}
\end{equation*}
Applying $h_i$ and and a sequence of transformations $r$, we get
\begin{equation*}
\hspace{-8 pt}
\begin{array}{r@{}l}
w_1&{} =  a_1 \cdots a_n v'  a_{i'}  b_{i'} \cdots b_{i'-(q'-1)(k+1)} a_{i'+1 -q'(k+1)} \cdots a_{i} a_{i+k} c_{i+k}  \cdots c_{i+k-q(k+1)} \vspace{3 pt}  \\
&{}\succ  a_1 \cdots a_n v'  a_{i'}  b_{i'} \cdots b_{i'-(q'-1)(k+1)} a_{i'+1 -q'(k+1)} \cdots a_{i-1} a_1 \cdots a_n \cdot \vspace{3 pt}  \\
&{} \hspace{270 pt} \cdot c_{i+k-(k+1)} \cdots c_{i+k-q(k+1)} \vspace{3 pt}  \\
&{} \succ  \ldots \succ a_1^2(a_1  \cdots a_n)^2 v'  a_{i'}  b_{i'} \cdots b_{i'-(q'-1)(k+1)} a_{i'+1 -q'(k+1)} \cdots a_{i-1}  \cdot \vspace{3 pt}  \\
&{} \hspace{270 pt} \cdot c_{i+k-(k+1)} \cdots c_{i+k-q(k+1)} \vspace{3 pt} \\
&{} =  a_1^2(a_1  \cdots a_n)^2 v'  a_{i'}  b_{i'} \cdots
b_{i'-(q'-1)(k+1)} a_{i'+1 -q'(k+1)} \cdots a_{i-1}  c_{i-1} \cdots
c_{i-1-(q-1)(k+1)}  \vspace{3 pt} \\
&{}=w_1'.
\end{array}
\end{equation*}
We have
\begin{equation*}
\begin{array}{r@{}l}
w_2&{}= a_1^2 (a_2 \cdots a_n)^2 v'  c_{i'+k-(k+1)} \cdots c_{i'+k-q'(k+1)} a_{i'-q'(k+1)} \cdots  a_{i -2} \cdot \vspace{3 pt}  \\
&{} \hspace{232 pt} \cdot b_{i-(k+1)} \cdots b_{i-q(k+1)}  a_{i-1 -q(k+1)} \vspace{3 pt} \\
 &{}= a_1^2 (a_2 \cdots a_n)^2 v'  c_{i'-1}  \cdots c_{i'-1-(q'-1)(k+1)} a_{i'-q'(k+1)} \cdots  a_{i -2}  \cdot \vspace{3 pt}  \\
&{} \hspace{195 pt} \cdot b_{i-k-1} \cdots b_{i-k-1-(q-1)(k+1)}  a_{i-1 -q(k+1)}.
\end{array}
\end{equation*}
By Lemma~\ref{joinn}, there exists $w_3\in \FM_n$ such that
$w_1',w_2\geq w_3$.
\vspace{18 pt} \\
{\it Case 23:} Overlaps of the form $(\theta, \eta)$, [$(\iota,
\zeta)$]. We apply $e_{i,v,q}$ to the subword of type $\eta$ to get
$w_1$ and we apply $s_{ i+(q'+1)(k+1),v',q'}$ to the subword of type
$\theta$ to get $w_2$. There are two kinds of overlap. We consider
the first kind of overlap, the second kind, as in {\it case 10}, can
be solved similarly. Let $w_0=a_1\cdots a_n$. We have
\begingroup
\fontsize{11 pt}{11pt}\selectfont
\begin{equation*}
\begin{array}{r@{}l}
w&{}  =  w_0 v  c_{i}  \cdots c_{i-q(k+1)} a_{i-k-q(k+1)} \vspace{3 pt} \\  
&{}= \overbrace{
\makebox[0pt][l]{$\displaystyle{\underbrace{\phantom{
w_0 v' a_{i+k+1+q'(k+1)}  b_{i+k+1+q'(k+1)} \cdots b_{i+k+1+q'(k+1)-q'(k+1)} a_{i+k}  
} }_{\text{$\theta$, $s_{i+(q'+1)(k+1),v',q'}$: $w_2$ } } } $}   
w_0 v' a_{i+k+1+q'(k+1)}  b_{i+k+1+q'(k+1)} \cdots b_{i+k+1+q'(k+1)-q'(k+1)} a_{i+k} \cdots  a_{i-2k}  c_{i-(k+1)}  \cdots c_{i-q(k+1)} a_{i-k-q(k+1)}  
}^{\text{$\eta$, $e_{i,v,q}$: $w_1$}} .  
\end{array}
\end{equation*}
\endgroup
Without loss of generality, it is assumed that the subwords $w_0=a_1
\cdots a_n$ at the beginning of the word of type $\eta$ and at the
beginning of the word of type $\theta$ coincide. We have
\begin{equation*}
\hspace{-5 pt}
\begin{array}{r@{}l}
w_1&{}= a_1 \cdots a_n v' a_{i+k+1+q'(k+1)}  b_{i+k+1+q'(k+1)} \cdots b_{i+k+1+q'(k+1)-q'(k+1)}\cdot     \vspace{3 pt} \\
&{} \hspace{235 pt}  \cdot a_{i+1} b_{i+1}  \cdots b_{i+1-q(k+1)},   \vspace{3 pt} \\
w_2&{}=a_1^2(a_2 \cdots a_n)^2 v' c_{i+k+1+q'(k+1)+k-(k+1)} \cdots c_{i+k+1+q'(k+1)+k-q'(k+1)} a_{i+2k} \cdots  a_{i-2k} \cdot \vspace{3 pt}  \\
&{} \hspace{235 pt}  \cdot  c_{i-(k+1)}
 \cdots c_{i-q(k+1)} a_{i-k-q(k+1)} .
\end{array}
\end{equation*}
Applying $s_{(q'+1)(k+1),v',q'+1}$, we get
\begin{equation*}
\begin{array}{r@{}l}
w_1 &{} =  a_1 \cdots a_n v' a_{i+k+1+q'(k+1)}  b_{i+k+1+q'(k+1)} \cdots b_{i+k+1+q'(k+1)-q'(k+1)} \cdot \vspace{3 pt} \\
&{} \hspace{265 pt} \cdot   a_{i+1} b_{i+1}  \cdots b_{i+1-q(k+1)}  \vspace{3 pt} \\
&{} =  a_1 \cdots a_n v' a_{i+(q'+1)(k+1)}  b_{i+(q'+1)(k+1)} \cdots  b_{i+(q'+1)(k+1)-q'(k+1)} \cdot  \vspace{3 pt}  \\
&{} \hspace{125 pt} \cdot b_{i+(q'+1)(k+1)-(q'+1)(k+1)}  a_{i-1}b_{i+1-(k+1)}  \cdots b_{i+1-q(k+1)}  \vspace{3 pt} \\
&{}\succ  a_1^2 (a_2 \cdots a_n)^2 v'c_{i+(q'+1)(k+1)+k-(k+1)} \cdots c_{i+(q'+1)(k+1)+k-q'(k+1)}  \cdot  \vspace{3 pt}  \\
&{} \hspace{125 pt} \cdot c_{i+(q'+1)(k+1)+k-(q'+1)(k+1)} b_{i+1-(k+1)}  \cdots b_{i+1-q(k+1)}  \vspace{3 pt} \\
&{}= a_1^2 (a_2 \cdots a_n)^2 v'c_{i+(q'+1)(k+1)+k-(k+1)} \cdots c_{i+(q'+1)(k+1)+k-q'(k+1)}  a_{i+2k} \cdots a_{i-2k}  \cdot  \vspace{3 pt} \\
&{}  \hspace{240 pt} \cdot a_{i-k}b_{i+1-(k+1)}  \cdots b_{i+1-q(k+1)}  \vspace{3 pt} \\
&{}= a_1^2 (a_2 \cdots a_n)^2 v'c_{i+(q'+1)(k+1)+k-(k+1)} \cdots
c_{i+(q'+1)(k+1)+k-q'(k+1)}  a_{i+2k} \cdots a_{i-2k}\cdot  \vspace{3 pt} \\
&{}  \hspace{240 pt} \cdot  a_{i-k}  b_{i-k}  \cdots b_{i-k-(q-1)(k+1)}  \vspace{3 pt} \\
&{}=w_1'.
\end{array}
\end{equation*}
We have
\begin{equation*}
\begin{array}{r@{}l}
w_2&{}= a_1^2(a_2 \cdots a_n)^2 v' c_{i+(q'+1)(k+1)+k-(k+1)} \cdots c_{i+(q'+1)(k+1)+k-q'(k+1)} a_{i+2k} \cdots  a_{i-2k}\cdot \\
&{}\hspace{225 pt} \cdot  c_{i-(k+1)}  \cdots c_{i-q(k+1)} a_{i-k-q(k+1)} \\
&{}= a_1^2(a_2 \cdots a_n)^2 v' c_{i+(q'+1)(k+1)+k-(k+1)} \cdots c_{i+(q'+1)(k+1)+k-q'(k+1)} a_{i+2k} \cdots  a_{i-2k} \cdot \\
&{}\hspace{150 pt} \cdot  c_{i-k-1} \cdots c_{i-k-1-(q-1)(k+1)} a_{i-k-1-k-(q-1)(k+1)} .
\end{array}
\end{equation*}
By Lemma~\ref{joinn}, there exists $w_3\in \FM_n$ such that
$w_1',w_2\geq w_3$.
\vspace{18 pt} \\
{\it Case 24 :} Overlaps of the form $(\theta, \theta)$, [$(\iota,
\iota)$]. We apply $s_{i,v,q}$ to a subword of type $\theta$ to get
$w_1$, and we apply $s_{i',v',q'}$ to the other subword of type
$\theta$ to get $w_2$. There are two kinds of overlap. We consider
the first kind of overlap, the second kind, as in {\it case 10}, can
be solved similarly. Let $w_0= a_1 \cdots a_n$. We have
\begin{equation*}
\begin{array}{r@{}l}
w &{}=  a_1 \cdots a_n v a_i  b_{i}  \cdots b_{i-q(k+1)} a_{i-1 -q(k+1)} \vspace{3 pt} \\  
&{}= \overbrace{
\makebox[0pt][l]{$\displaystyle{\underbrace{\phantom{
w_0 v' a_{i'}   b_{i'}  \cdots b_{i'-(q'-1)(k+1)} a_{i'+1 -q'(k+1)} \cdots a_i \cdots a_{i'-1 -q'(k+1)} 
} }_{\text{$\theta$, $s_{i',v',q'}$: $w_2$ } } } $}   
w_0 v' a_{i'}   b_{i'}  \cdots b_{i'-(q'-1)(k+1)} a_{i'+1 -q'(k+1)} \cdots a_i \cdots a_{i'-1 -q'(k+1)} \cdots a_{i -2}  b_{i-(k+1)}  \cdots b_{i-q(k+1)} a_{i-1 -q(k+1)}  
}^{\text{$\theta$, $s_{i,v,q}$: $w_1$}}  .  
\end{array}
\end{equation*}
Without loss of generality, it is assumed that the subwords $w_0=  a_1 \cdots a_n$ at the beginning of
the two words of type $\theta$ coincide. We have
\begin{equation*}
\begin{array}{r@{}l}
w_1&{}= a_1^2(a_2 \cdots a_n)^2 v' a_{i'}   b_{i'}  \cdots b_{i'-(q'-1)(k+1)} a_{i'+1 -q'(k+1)} \cdots a_{i-1} \cdot \vspace{3 pt} \\
&{} \hspace{205 pt}\cdot c_{i+k-(k+1)}c_{i+k-2(k+1)} \cdots c_{i+k-q(k+1)},  \vspace{3 pt} \\
w_2&{}=a_1^2(a_2 \cdots a_n)^2  v' c_{i'+k-(k+1)} \cdots c_{i'+k-q'(k+1)}  a_{i'-q'(k+1)} \cdots  a_{i -2} \cdot \vspace{3 pt} \\
&{} \hspace{205 pt} \cdot b_{i-(k+1)} \cdots b_{i-q(k+1)} a_{i-1 -q(k+1)}.
\end{array}
\end{equation*}
We have
\begin{equation*}
\begin{array}{r@{}l}
w_1 &{}= a_1^2(a_2 \cdots a_n)^2 v' a_{i'}   b_{i'}  \cdots b_{i'-(q'-1)(k+1)} a_{i'+1 -q'(k+1)} \cdots a_{i-1} \cdot \vspace{3 pt} \\
&{} \hspace{260 pt} \cdot c_{i+k-(k+1)} \cdots c_{i+k-q(k+1)} \vspace{3 pt} \\
&{}= a_1^2(a_2 \cdots a_n)^2 v' a_{i'}   b_{i'}  \cdots b_{i'-(q'-1)(k+1)} a_{i'+1 -q'(k+1)} \cdots a_{i-1} \cdot \vspace{3 pt} \\
&{} \hspace{260 pt} \cdot c_{i-1} \cdots c_{i-1-(q-1)(k+1)} \vspace{3 pt},\\
w_2 &{}= a_1^2(a_2 \cdots a_n)^2 v' c_{i'+k-(k+1)}    \cdots c_{i'+k-q'(k+1)}  a_{i'-q'(k+1)} \cdots  a_{i -2}  \cdot \vspace{3 pt}\\
&{} \hspace{230 pt}  \cdot b_{i-(k+1)}  \cdots b_{i-q(k+1)} a_{i-1 -q(k+1)} \vspace{3 pt}\\
&{}= a_1^2(a_2 \cdots a_n)^2  v' c_{i'-1}   \cdots  c_{i'-1-(q'-1)(k+1)}  a_{i'-q'(k+1)} \cdots  a_{i -2} \cdot \vspace{3 pt}\\
&{} \hspace{155 pt} \cdot b_{i-k-1} \cdots b_{i-k-1-(q-1)(k+1)} a_{i-k-1-1 -(q-1)(k+1)}.
\end{array}
\end{equation*}
By Lemma~\ref{joinn}, there exists $w_3\in \FM_n$ such that
$w_1,w_2\geq w_3$.
\vspace{18 pt} \\
{\it Case 25:} Overlaps of the form $(\theta, \iota)$, [$(\iota,
\theta)$]. We apply $u_{i,v,q}$ to the subword of type $\iota$ to
get $w_1$, and we apply $s_{i+1+q'(k+1),v',q'}$ to the subword of
type $\theta$ to get $w_2$. There are two kinds of overlap. We
consider the first kind of overlap, the second kind, as in {\it case
10}, can be solved similarly. We have
\begin{equation*}
\hspace{-13 pt}
\begin{array}{r@{}l}
w &{}= a_1 \cdots a_n v a_i  c_{i}  \cdots c_{i-q(k+1)} a_{i-k -q(k+1)} \vspace{3 pt} \\  
&{}= \overbrace{
\makebox[0pt][l]{$\displaystyle{\underbrace{\phantom{
a_1 \cdots a_n v' a_{  i+1+q'(k+1)} b_{i+1+q'(k+1)}\cdots b_{i+1+q'(k+1)-q'(k+1)}a_i 
} }_{\text{$\theta$, $s_{i+1+q'(k+1),v',q'}$: $w_2$ } } } $}   
a_1 \cdots a_n v' a_{  i+1+q'(k+1)} b_{i+1+q'(k+1)}\cdots b_{i+1+q'(k+1)-q'(k+1)}a_i  c_{i}  \cdots c_{i-q(k+1)} a_{i-k -q(k+1)}  
}^{\text{$\iota$, $u_{i,v,q}$: $w_1$}}  .  
\end{array}
\end{equation*}
Without loss of generality, it is assumed that the subwords $a_1 \cdots a_n$ at the beginning of
the word of type $\iota$ and at the beginning of the word of type $\theta$ coincide. We have
\begin{equation*}
\begin{array}{r@{}l}
w_1 &{}=  a_1^2 (a_2 \cdots a_n)^2 v' a_{  i+1+q'(k+1)} b_{i+1+q'(k+1)}\cdots b_{i+1+q'(k+1)-q'(k+1)}\cdot  \vspace{3 pt}  \\
&{} \hspace{260 pt}    \cdot b_{i+1-(k+1)}  \cdots b_{i+1-q(k+1)}, \vspace{3 pt}  \\
 w_2 &{}= a_1^2(a_2 \cdots a_n)^2 v' c_{i+1+q'(k+1)+k-(k+1)}\cdots c_{i+1+q'(k+1)+k-q'(k+1)}\cdot  \vspace{3 pt}  \\
 &{} \hspace{260 pt}   \cdot c_{i} \cdots c_{i-q(k+1)} a_{i-k -q(k+1)}.
\end{array}
\end{equation*}
We have
\begin{equation*}
\begin{array}{r@{}l}
w_1 &{} =   a_1^2 (a_2 \cdots a_n)^2 v' a_{  i+1+q'(k+1)} b_{i+1+q'(k+1)}\cdots b_{i+1+q'(k+1)-q'(k+1)} \cdot  \vspace{3 pt} \\
&{} \hspace{260 pt}\cdot b_{i+1-(k+1)}  \cdots b_{i+1-q(k+1)}  \vspace{3 pt} \\
&{}=  a_1^2 (a_2 \cdots a_n)^2 v' a_{  i+1+q'(k+1)} b_{i+1+q'(k+1)}\cdots b_{i+1+q'(k+1)-q'(k+1)} \cdot  \vspace{3 pt}  \\
&{}\hspace{145 pt}\cdot b_{i+1+q'(k+1)-(q'+1)(k+1)} \cdots b_{i+1+q'(k+1)-(q'+q)(k+1)} \vspace{3 pt}, \\
w_2 &{}= a_1^2(a_2 \cdots a_n)^2 v' c_{i+1+q'(k+1)+k-(k+1)}\cdots c_{i+1+q'(k+1)+k-q'(k+1)}\cdot \vspace{3 pt}  \\
&{}\hspace{260 pt}\cdot  c_{i}  \cdots c_{i-q(k+1)} a_{i-k -q(k+1)}\vspace{3 pt}  \\
&{}= a_1^2(a_2 \cdots a_n)^2 v' c_{i+q'(k+1)}\cdots c_{i+q'(k+1)-(q'-1)(k+1)} c_{i}  \cdots c_{i-q(k+1)} a_{i-k -q(k+1)}  \vspace{3 pt} \\
&{}= a_1^2(a_2 \cdots a_n)^2 v' c_{i+q'(k+1)}\cdots c_{i+q'(k+1)-(q'-1)(k+1)}\cdot \vspace{3 pt}  \\
&{}\hspace{75 pt}\cdot c_{i+q'(k+1)-q'(k+1)}  \cdots c_{i+q'(k+1)-(q'+q)(k+1)} a_{i+q'(k+1)-k -(q'+q)(k+1)}.
\end{array}
\end{equation*}
By Lemma~\ref{joinn}, there exists $w_3\in \FM_n$ such that
$w_1,w_2\geq w_3$.
\vspace{18 pt}
\end{proof}

Note that, using the notation in the last part of
the proof of the above theorem, $S_n\setminus
a_1a_2\cdots a_nS_n=\{ a\in S_n\mid
\card \label{cardi} (\pi^{-1}(a))=1\}$. So, elements in this set have a
unique presentation in $S_{n}$.  In particular, we
have a natural isomorphism $S_{n}/(a_{1}\cdots
a_{n}S_{n})\cong \FM_{n}/\pi^{-1} (a_{1}\cdots
a_{n}S_{n})$ (again, we use the same notation for
the generators of $\FM_{n}$ and those of $S_{n}$).

\section{The algebra $K[S_n(H)]$ defined by dihedral type relations}\label{alg1}

Recall that if $R$ is a finitely generated algebra over a
field $K$, with a set of generators $a_1,\dots ,a_n$, and $\pi\colon
K\langle x_1,\dots ,x_n\rangle\longrightarrow R$ is the unique
homomorphism from the free algebra $K\langle x_1,\dots ,x_n\rangle$
to $R$ such that $\pi (x_i)=a_i$, for all $i$, and we have a fixed
order on the free monoid $\FM_n=\langle x_1,\dots ,x_n\rangle$
compatible with the multiplication in $\FM_n$, then $R$ is an
automaton algebra with respect to the presentation $\pi$ and the
fixed order on $\FM_n$ if and only if the ideal $I$ of $\FM_n$
consisting of all leading monomials of elements of $\ker(\pi)$ is a
regular language. A set of elements of $\FM_n$ is a regular language
if it is obtained from a finite subset of $\FM_n$ by applying a
finite number of operations of union, multiplication
and operation
$*$, defined by $T^*=\bigcup_{i\geq 1}T^{i}$, for $T\subseteq
\FM_n$.
Multiplication means multiplication of non-empty subsets of $\FM_n$: if $A$ and $B$ are non-empty  subsets of $\FM_n$, then
$AB=\{  ab \mid a\in A \mbox{ and } b\in B\}$. For example,
$\{ x_1,x_2\}x_3^*=\{ x_1x_3^{i}\mid i\geq 1\}\cup  \{ x_2x_3^{i}\mid i\geq 1\}$ and this is a regular language.
Note that $\FM_n$ is a regular language because
$\FM_n=\{ 1, x_1, x_2,\dots ,x_n\}^*$.
The set $N(R)=\FM_n\setminus I$ is called the set of normal
words of $R$, with respect to the given presentation of $R$ and the
fixed order on $\FM_n$. By \cite[Lemma of page~96]{ufnar}, $I$ is a
regular language if and only if $N(R)$ is a regular language.

Let $H$ be a subgroup of $\Sym_n$ that contains $\langle (1,2,\dots
,n)\rangle$ as a normal subgroup of index $2$, where $n>3$. We
denote $S_n(H)$ by $S_n$. An easy consequence of
Theorem~\ref{normalform} is the following result.

\begin{theorem}\label{autmaton} Let $K$ be a field. Then $K[S_n]$ is an automaton algebra with respect to the given presentation of $S_n$ and
the length-lexicographical order $\ll$  on $\FM_n$ generated by
$a_1\ll a_2\ll\dots\ll a_n$. (Here we use the same notation for the
generators of $\FM_n$ and those of $S_n$).
\end{theorem}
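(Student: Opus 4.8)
The plan is to verify the characterization of automaton algebras recalled immediately before the statement: I must show that the ideal $I\subseteq\FM_n$ of leading words (for $\ll$) of the elements of $\ker(\pi)$ is a regular language, where $\pi\colon K\langle a_1,\dots,a_n\rangle\to K[S_n]$ is the canonical map. The input I exploit is the content of Theorem~\ref{normalform}: its proof establishes that every ambiguity of the reduction system $T=\{(w_\varphi,y_\varphi)\}$ coming from (\ref{ti})--(\ref{ui}) is resolvable, so $T$ is confluent and hence a noncommutative Gr\"obner basis of $\ker(\pi)$ for $\ll$. A word is $T$-reducible precisely when it contains some left-hand side $w_\varphi$ as a factor, so the set of reducible words is $\bigcup_\varphi\FM_n w_\varphi\FM_n$, and confluence guarantees that this set is exactly the leading-word ideal, i.e.
$$I=\bigcup_\varphi\FM_n\,w_\varphi\,\FM_n.$$
Since regular languages over the finite alphabet $\{a_1,\dots,a_n\}$ are closed under finite union (and under complementation, so that $N(K[S_n])=\FM_n\setminus I$ would be regular as well), and since there are only finitely many \emph{types} of transformation, it suffices to show that each type contributes a regular language.

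The finite types are immediate. For $t_i$ ($2\le i\le n$) and $h_i$ ($1\le i\le n$) the left-hand sides form a finite set of fixed words, so their contribution is a finite union of languages $\FM_n w\FM_n$, which is regular. For $r_{j,m}$ the left-hand side is $a_ja_1^{m}a_2\cdots a_n$ with $2\le j\le n$, $m\ge1$, so the total contribution is $\FM_n\,\{a_2,\dots,a_n\}\,a_1^{*}\,(a_2a_3\cdots a_n)\,\FM_n$, which is regular since $a_1^{*}=\{a_1^m:m\ge1\}$ is produced by the operation $*$.

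The real content is the four parametrized families $d_{i,v,q},e_{i,v,q},s_{i,v,q},u_{i,v,q}$, each depending on a free word $v\in\FM_n$ and an integer $q\ge0$. The key observation is that in each left-hand side the parameter $v$ sits strictly between the fixed factor $a_1\cdots a_n$ and a chain of $b$'s or $c$'s, so that forming the union over all $v\in\FM_n$ absorbs $v$ into an $\FM_n$ factor. Thus, writing $\mathcal{B}_i$ and $\mathcal{C}_i$ for the chain languages
$$\mathcal{B}_i=\bigcup_{q\ge0}b_ib_{i-(k+1)}\cdots b_{i-q(k+1)}a_{i-1-q(k+1)},\qquad \mathcal{C}_i=\bigcup_{q\ge0}c_ic_{i-(k+1)}\cdots c_{i-q(k+1)}a_{i-k-q(k+1)},$$
and setting $\mathcal{B}=\bigcup_{i=n-k+1}^{n-1}\mathcal{B}_i$, $\mathcal{C}=\bigcup_{i=0}^{n-k}\mathcal{C}_i$, the contributions of $d,e,s,u$ are, respectively, $\FM_n(a_1\cdots a_n)\FM_n\,\mathcal{B}\,\FM_n$, $\FM_n(a_1\cdots a_n)\FM_n\,\mathcal{C}\,\FM_n$, $\FM_n(a_1\cdots a_n)\FM_n\big(\bigcup_{i=1}^n a_i\mathcal{B}_i\big)\FM_n$ and $\FM_n(a_1\cdots a_n)\FM_n\big(\bigcup_{i=1}^n a_i\mathcal{C}_i\big)\FM_n$. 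Hence everything reduces to the regularity of the chain languages $\mathcal{B}_i$ and $\mathcal{C}_i$.

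This last step is where the real work lies, and I expect it to be the main obstacle. Fix $i$ and let $d=n/\gcd(n,k+1)$ be the additive order of $k+1$ in $\mathbb{Z}/n\mathbb{Z}$; then the residues $i-t(k+1)\bmod n$ are periodic in $t$ with period $d$. Consequently the one-sided infinite block-word $b_i\,b_{i-(k+1)}\,b_{i-2(k+1)}\cdots$ is purely periodic with period word $W_i=b_ib_{i-(k+1)}\cdots b_{i-(d-1)(k+1)}$. Writing $q=sd+r$ with $s\ge0$ and $0\le r<d$, and noting that the terminal letter $a_{i-1-q(k+1)}$ depends only on $r$, one obtains
$$\mathcal{B}_i=\big(\{1\}\cup W_i^{*}\big)\,F_i,\qquad F_i=\{\,b_i\cdots b_{i-r(k+1)}a_{i-1-r(k+1)}:0\le r<d\,\},$$
with $F_i$ finite; hence $\mathcal{B}_i$ is regular. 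The identical argument (with $c_p$ replacing $b_p$) shows each $\mathcal{C}_i$ is regular, and then $a_i\mathcal{B}_i$, $a_i\mathcal{C}_i$ are regular as well. Since a finite union of regular languages is regular and regular languages are closed under concatenation, each of the seven contributions is regular, so $I$ is regular. By the criterion recalled above, $K[S_n]$ is an automaton algebra with respect to $\ll$.
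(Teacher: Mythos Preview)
Your proof is correct and follows essentially the same route as the paper: both rely on Theorem~\ref{normalform} (confluence of the reduction system) and on the periodicity of the $b$- and $c$-chains modulo~$n$ to exhibit the relevant languages as regular. The only cosmetic difference is that you describe the leading-word ideal $I$ directly as $\bigcup_\varphi \FM_n w_\varphi \FM_n$, whereas the paper instead writes out the complementary set $N(K[S_n])$ of normal words and then invokes Ufnarovski\u{\i}'s lemma that $I$ is regular iff $N$ is; your version is arguably a bit cleaner, and your use of the minimal period $d=n/\gcd(n,k{+}1)$ (rather than the paper's $n$) is a harmless refinement.
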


\begin{proof}
For $w\in\FM_n$, we define $w^*=\{ w^{i}\mid i\geq 0\}$. By
Theorem~\ref{normalform}, the set of normal words of the algebra
$K[S_n]$ is
\begin{equation*}
\begin{array}{r@{}l}
N(K[S_n])
&{}=a_1^*(\FM_n\setminus (I\cup a_2\cdots a_n\FM_n))\cup
(a_2\cdots
a_n)^*(\FM_n\setminus (I\cup a_1\FM_n)) \cup \vspace{3 pt} \\
&{}\hspace{16 pt} \cup\, a_1a_1^*(a_2\cdots a_n)(a_2\cdots a_n)^* \cdot \\
&{}\hspace{68 pt} \cdot (\FM_n\setminus
(I\cup a_1\FM_n\cup a_2\cdots a_n\FM_n\cup I_{\zeta } \cup I_{\eta }
\cup I_{\theta } \cup I_{\iota})),
\end{array}
\end{equation*}
where $I=\bigcup_{\sigma\in H}\FM_n a_{\sigma(1)}\cdots
a_{\sigma(n)}\FM_n$,
\begin{equation*}
\begin{array}{r@{}l}
I_{\zeta}&{}=\bigcup_{q=0}^{n-1}\bigcup_{i=n-k+1}^{n-1}\FM_n(b_{i}b_{i-(k+1)}\cdots b_{i-(n-1)(k+1)})^*  \cdot \vspace{3 pt} \\
&{} \hspace{197 pt} \cdot  b_{i}b_{i-(k+1)}\cdots b_{i-q(k+1)}a_{i-1-q(k+1)}\FM_n,\vspace{6 pt} \\
I_{\eta}&{}=\bigcup_{q=0}^{n-1}\bigcup_{i=0}^{n-k}\FM_n(c_{i}c_{i-(k+1)}\cdots c_{i-(n-1)(k+1)})^* \cdot \vspace{3 pt}\\
&{} \hspace{197 pt} \cdot c_{i}c_{i-(k+1)}\cdots c_{i-q(k+1)}a_{i-k-q(k+1)}\FM_n,  \vspace{6 pt} \\
I_{\theta}&{}=\bigcup_{q=0}^{n-1}\bigcup_{i=1}^{n}\FM_n a_i(b_{i}b_{i-(k+1)}\cdots b_{i-(n-1)(k+1)})^* \cdot \vspace{3 pt} \\
&{} \hspace{197 pt} \cdot b_{i}b_{i-(k+1)}\cdots b_{i-q(k+1)}a_{i-1-q(k+1)}\FM_n, \vspace{6 pt} \\
I_{\iota}&{}=\bigcup_{q=0}^{n-1}\bigcup_{i=1}^{n}\FM_n a_i(c_{i}c_{i-(k+1)}\cdots c_{i-(n-1)(k+1)})^* \cdot \vspace{3 pt} \\
&{} \hspace{197 pt} \cdot c_{i}c_{i-(k+1)}\cdots c_{i-q(k+1)}a_{i-k-q(k+1)}\FM_n.
\end{array}
\end{equation*}
By \cite[Lemma of page~96]{ufnar}, $N(K[S_n])$ is a regular language.
Therefore $K[S_n]$ is an automaton algebra.
\end{proof}

It follows from the beginning of Section \ref{monoidd}, that the
universal group of $S_n$, which means the group with the same presentation as $S_n$, is
\begin{equation*}
\begin{array}{r@{}l}
G_{n}=  \gr ( a_1 ,a_2 ,\ldots ,a_n  \mid    a_1 a_2 \cdots a_{n-1}a_n
 &{} = a_i a_{i+1} a_{i+2} \cdots a_{i-2} a_{i-1} \vspace{6 pt} \\
 &{} =a_i a_{i+k} a_{i+2 k} \cdots  a_{i-2 k} a_{i- k} , \ 1 \leq i \leq n ),
\end{array}
\end{equation*}
where $k^2 \equiv  1 \mod n$.
\vspace{0 pt} \\
\begin{theorem}
The universal group $G_n$ of $S_n(H)$ is a unique product group\index{unique product group}.
\end{theorem}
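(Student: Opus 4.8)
The plan is to exploit the central element $z=a_1a_2\cdots a_n$ together with the obvious degree homomorphism in order to reduce the assertion to an orderability statement about a single quotient of $G_n$, and then to appeal to the classical chain of implications \emph{locally indicable $\Rightarrow$ right-orderable $\Rightarrow$ unique product}.

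First I would record two structural facts. Since every defining relation of $G_n$ equates two products of exactly $n$ generators, the assignment $a_i\mapsto 1$ extends to a surjective homomorphism $\phi\colon G_n\to\mathbb{Z}$ with $\phi(z)=n$; in particular the central subgroup $\langle z\rangle$ is infinite cyclic. Next, I would perform the Tietze transformation eliminating $a_n=(a_1\cdots a_{n-1})^{-1}z$. Each defining relation $a_1\cdots a_n=W$ has the letter $a_n$ occurring exactly once in $W$, so after the substitution $z$ occurs exactly once on each side; using the centrality of $z$ one moves it to the front and cancels, leaving a relation that is a word of exponent sum $0$ in $a_1,\dots,a_{n-1}$ alone. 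This exhibits $G_n$ as a central extension
\[ 1 \longrightarrow \langle z\rangle \longrightarrow G_n \longrightarrow \overline{G}_n \longrightarrow 1, \qquad \overline{G}_n := G_n/\langle z\rangle=\langle a_1,\dots,a_{n-1}\mid R\rangle, \]
and, since $\phi$ retracts onto the factor $\langle z\rangle$, one in fact expects $G_n\cong \overline{G}_n\times\mathbb{Z}$ (for $n=4$, where $\overline{G}_n$ is the one-relator group $\langle a,b,c\mid abc=cba\rangle$, this can be checked by hand).

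The heart of the proof is then to show that $\overline{G}_n$ is right-orderable. I would aim at the stronger property that $\overline{G}_n$ is \emph{locally indicable}: local indicability is closed under group extensions and holds for $\mathbb{Z}$, so it would pass to $G_n$, and by the Burns--Hale theorem a locally indicable group is right-orderable, whence a unique product group by the standard fact that right-orderable groups are unique product groups. Because the exponent-sum map $\overline{\psi}\colon\overline{G}_n\to\mathbb{Z}$, $a_i\mapsto 1$, is onto, it suffices to prove that its kernel $K_n$ is locally indicable; this is where the arithmetic $k^2\equiv 1\pmod n$ and the explicit rewriting rules underlying the normal form of Theorem~\ref{normalform} must be used. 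The target would be to present $K_n$ as an ascending union of finitely generated, visibly orderable (ideally locally free) subgroups generated by the $\overline{\psi}$-conjugates $a_1^{-m}(a_ia_j^{-1})a_1^{m}$ of the "difference" elements.

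The step I expect to be the genuine obstacle is exactly this last one: extracting right-orderability (local indicability) of $\overline{G}_n$ from the relation set $R$. The centrality of $z$ dissolves the extension cleanly, but $\overline{G}_n$ is for general $n$ a many-relator group whose degree kernel $K_n$ is \emph{not} finitely generated, so orderability cannot simply be read off a polycyclic or one-relator structure; it has to be teased out of the combinatorics of the two families of permutation relations, which is precisely the content assembled in Section~\ref{monoidd}. Should a direct proof of local indicability of $K_n$ prove too delicate, the fallback I would pursue is to build a right-invariant total order on $G_n$ by hand from the length-lexicographical normal form, transporting the order on the cancellative semigroup $zS_n$ to its central localization $G_n=zS_n\langle z\rangle^{-1}$, and then invoke right-orderability $\Rightarrow$ unique product as above.
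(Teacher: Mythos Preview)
Your overall architecture coincides with the paper's: split off the central infinite cyclic subgroup $\langle z\rangle$, show the quotient is locally indicable, and then invoke the chain locally indicable $\Rightarrow$ right-orderable $\Rightarrow$ unique product. What you are missing is the decisive simplification that makes the argument go through: the quotient $\overline{G}_n$ is a \emph{one-relator} group, not a many-relator group. After declaring $z$ central and eliminating one generator (the paper eliminates $a_1$; you eliminate $a_n$, which works just as well), every cyclic relation $a_1\cdots a_n=a_ia_{i+1}\cdots a_{i-1}$ reduces freely to $1=1$, and all $n$ of the ``dihedral'' relations $a_1\cdots a_n=a_ia_{i+k}\cdots a_{i-k}$ are consequences of the single relation
\[
a_2a_3\cdots a_n \;=\; a_{1+k}a_{1+2k}\cdots a_{1+(n-1)k}.
\]
The paper makes this explicit by writing down mutually inverse homomorphisms between $G_n$ and $\langle z\rangle\times H_n$, where $H_n=\langle a_2,\dots,a_n\mid a_2\cdots a_n=a_{1+k}\cdots a_{1+(n-1)k}\rangle$; the verification that the homomorphism $G_n\to\langle z\rangle\times H_n$ respects each dihedral relation is exactly the telescoping cancellation you would see after your Tietze substitution.

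Once this is recognised, the ``genuine obstacle'' you flag disappears. The relator of $H_n$ is not a proper power, so $H_n$ is torsion-free by the standard one-relator theory (Magnus--Karrass--Solitar), and Brodski\u{\i}'s theorem then gives that $H_n$ is locally indicable. No analysis of the degree kernel $K_n$, and no transport of order from the normal form of Theorem~\ref{normalform}, is needed. One small side remark: your sentence ``$\phi$ retracts onto the factor $\langle z\rangle$'' is not quite right, since $\phi(z)=n$; the splitting is instead read off directly from your Tietze presentation $\langle a_1,\dots,a_{n-1},z\mid z\text{ central},\ R(a_1,\dots,a_{n-1})\rangle$, and in any case local indicability is closed under extensions, so the splitting is not even required for the conclusion.
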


\begin{proof}
Consider the groups
$$ H_n= \gr ( a_2, \ldots, a_n \mid a_2 a_3 \cdots a_n = a_{1+k} a_{1+2k} \cdots a_{1+(n-1)k} ),$$
and $$ T_n = \gr (z) \times H_n .$$

Note that $(z,1)$ is a central element of $T_n$ and
\begin{eqnarray*}
(z, (a_2 a_3 \cdots a_n)^{-1})(1,a_2)(1,a_3) \cdots (1, a_n) &=& (z,1)   \\
\text{and} \hspace{12 pt} (z, (a_2 a_3 \cdots
a_n)^{-1})(1,a_{1+k})(1,a_{1+2k}) \cdots (1, a_{1+(n-1)k}) &=& (z,1)
\end{eqnarray*}
Hence there exists a unique homomorphism $f:G_n \to T_n $ such that
\begin{eqnarray*}
 f(a_i)&=&(1,a_i) \hspace{12 pt} \text{ for $i = 2, \ldots, n$, and} \\
 f(a_1)&=&(z, (a_2  \cdots a_n)^{-1}).
\end{eqnarray*}
Note that $a_1 a_2 \cdots a_n$ is a central element in $G_n$ (see page \pageref{centra}) and that
$a_2 a_3 \cdots a_n = a_{1+k} a_{1+2k} \cdots a_{1+(n-1)k} $ in
$G_n$. Furthermore, because the defining relations
of $G_n$ are homogeneous with respect to the degree of each $a_i$,
the subgroup of $G_n$ generated by $a_2, \ldots
, a_n$ intersects trivially with the subgroup generated by $a_1 a_2
\cdots a_n$. Hence there exists a unique homomorphism $g:T_n \to
G_n$ such that
\begin{eqnarray*}
 g(1, a_i)&=& a_i \hspace{12 pt} \text{for $i=2,\ldots, n$ and} \\
 g(z, 1) &=& a_1 a_2 \cdots a_n
\end{eqnarray*}
It is easy to check that $g=f^{-1}$. Hence $$G_n \cong T_n.$$

Recall that a group $G$ is said to be locally indicable\index{locally indicable} if every non-trivial
finitely generated subgroup of $G$ can be mapped homomorphically onto a finite cyclic group.
Such groups are right ordered \cite[Exercise 9, page 638]{passman} and  thus t.u.p.\index{t.u.p.} groups.
Indicable groups were first introduced by G. Higman\index{Higman, G.} in \cite{higman}.
It is well known that the group algebra of a u.p. group only has trivial units
(see \cite[Lemma 13.1.9]{passman})
and thus no zero divisors.

By a
result of Brodski\u{\i} \cite{brodskii}, every torsion free
one-relator group is locally indicable (see
\cite[Theorem~3.1]{howie} for a short proof).
Since the word $a_2a_3\cdots a_na_{1+(n-1)k}^{-1}a_{1+(n-2)k}^{-1}\cdots a_{1+k}^{-1}$
is not a proper power, by \cite[Theorem~4.12]{solitar}, $H_n$ is torsion free.
Hence $H_n$ is locally indicable, thus it is a unique product
group.
A direct product of u.p. groups is u.p,
thus $G_n$ is a u.p. group.
\end{proof}

Note that the monoid $S_n=S_n(H)$ is not cancellative. Thus $S_n$
is not embedded in $G_n$. Let $z=a_1 a_2 \ldots a_n \in S_n$, a central element in $S_n$.

\begin{lemma}\label{zS}
The subsemigroup $zS_n$ of $S_n$ is cancellative and $G_n = z S_n \langle z
\rangle ^{-1}$.
\end{lemma}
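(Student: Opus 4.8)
The plan is to embed $zS_n$ into the group $G_n$ through the natural homomorphism and to identify $G_n$ with the central localization of $S_n$ at $z$; cancellativity of $zS_n$ is then automatic, and the formula $G_n=zS_n\langle z\rangle^{-1}$ reads off from the localization description. I would begin with the natural monoid homomorphism $\phi\colon S_n\to G_n$ carrying each generator to the like-named generator of $G_n$, which exists since $G_n$ has the same presentation as $S_n$. The rotation relations give, for every $i$, $z=a_ia_{i+1}\cdots a_{i-1}=a_{i+1}\cdots a_{i-1}a_i$, so after inverting the central element $z$ each generator becomes invertible, with two-sided inverse $a_i^{-1}=(a_{i+1}\cdots a_{i-1})z^{-1}$. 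Hence $S_n\langle z\rangle^{-1}$ is already a group, and matching the universal property of the localization against that of the group completion $G_n$ yields a canonical isomorphism $G_n\cong S_n\langle z\rangle^{-1}$ under which $\phi$ becomes the localization map. In particular, for $x,y\in S_n$ one has $\phi(x)=\phi(y)$ if and only if $z^mx=z^my$ in $S_n$ for some $m\ge0$.

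The heart of the argument is that left multiplication by $z$ is injective on $zS_n$, and here I use Theorem~\ref{normalform}. Because $z=a_1(a_2\cdots a_n)=(a_2\cdots a_n)a_1$ in $S_n$, every $x\in zS_n$ lies in $a_1S_n\cap(a_2\cdots a_n)S_n$; from the normal form this forces the normal form $x=a_1^{i}(a_2\cdots a_n)^jb$ to have $i\ge1$ and $j\ge1$ (conversely $a_1^{i}(a_2\cdots a_n)^jb=z\,a_1^{i-1}(a_2\cdots a_n)^{j-1}b$ when $i,j\ge1$, since $z$ is central). The decisive consequence is that, having $i,j\ge1$, the tail $b$ satisfies the full list of conditions imposed in Theorem~\ref{normalform} when both exponents are positive. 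Therefore $zx=a_1^{i+1}(a_2\cdots a_n)^{j+1}b$ is again a normal form: the tail $b$ is unchanged and the exponents $i+1,j+1$ are still positive, so none of the forbidden patterns can appear. If $x,y\in zS_n$ satisfy $zx=zy$, comparing these normal forms via uniqueness gives equal exponents and equal tails, hence $x=y$.

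Finally I would combine the two ingredients. First, $zS_n$ is a subsemigroup of $S_n$, since $za\cdot zb=z(zab)\in zS_n$. Now suppose $x,y\in zS_n$ with $\phi(x)=\phi(y)$. By the first paragraph there is $m\ge0$ with $z^mx=z^my$; as $z^kx,z^ky\in zS_n$ for all $k\ge0$ and $z$ is injective on $zS_n$, cancelling $z$ a total of $m$ times yields $x=y$. Thus $\phi$ restricts to an injection of $zS_n$ into the group $G_n$, and a subsemigroup of a group is cancellative, so $zS_n$ is cancellative. For the remaining assertion, every element of $G_n\cong S_n\langle z\rangle^{-1}$ has the form $\phi(s)\phi(z)^{-m}=\phi(zs)\phi(z)^{-(m+1)}$ with $zs\in zS_n$, whence $G_n=zS_n\langle z\rangle^{-1}$. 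The one genuine obstacle is the injectivity of multiplication by $z$ on $zS_n$; what makes it work is the observation that belonging to $zS_n$ is exactly the condition guaranteeing that the tail of the normal form already meets the stronger requirements, so one further multiplication by $z$ triggers none of the reductions $s_{i,v,q},u_{i,v,q}$ and the normal form is simply read off.
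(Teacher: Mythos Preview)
Your proof is correct and rests on the same core observation as the paper's: for $x\in zS_n$ the normal form has both exponents positive, and since the tail $b$ then already satisfies the extra constraints of Theorem~\ref{normalform}, multiplying by $z$ simply increments both exponents without disturbing $b$; uniqueness of the normal form then gives injectivity of multiplication by $z$ on $zS_n$.

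The only real difference is packaging. The paper proves cancellativity directly: given $ww_1=ww_2$ with $w\in zS_n$, it uses the observation that every $w$ admits some $u$ with $uw=z^l$ (an immediate consequence of the cyclic relations), reducing at once to $z^lw_1=z^lw_2$ and then reading off $w_1=w_2$ from the normal form. You instead first identify $G_n$ with the central localization $S_n\langle z\rangle^{-1}$ and deduce cancellativity from the injectivity of $\phi|_{zS_n}$, which in turn you obtain from the same $z$-cancellation step. Your route makes the identity $G_n=zS_n\langle z\rangle^{-1}$ more transparent (it drops out of the localization description), at the cost of an extra layer; the paper's route is shorter but leaves the final identification ``easy to see''. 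Both are sound.
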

\begin{proof}
Since $z$ is central, it follows from the defining relations of $S_n$ that for every $w\in S_n$ there exists $u\in S_n$ such that $uw=z^l$ for some positive integer $l$.
Let $w_{k}=a_1^{i_{k}}(a_1a_2\cdots a_n)(a_2\cdots
a_n)^{j_{k}}b_{k}, k=1,2$, be two elements of $zS_{n}$ written in
the canonical form established in Theorem~\ref{normalform}. Suppose
that $ww_1=ww_2$, for some $w\in z S_n$. Then
$z^lw_1=z^lw_2$, for some positive integer $l$. But the canonical form of $z^lw_k$ is
$$z^lw_k=a_1^{i_{k}+l}(a_1a_2\cdots a_n)(a_2\cdots
a_n)^{j_{k}+l}b_{k},$$ for $k=1,2$. Hence $w_1=w_2$ and therefore,
$zS_n$ is left cancellative. Similarly one can prove that $zS_n$ is right cancellative. Therefore, it follows that $zS_{n}$ is cancellative and that the
central localization $zS_{n}\langle z\rangle ^{-1}$ is the group of
fractions of $zS_{n}$. Now it is easy to see that $G_n=zS_{n}\langle
z\rangle ^{-1}$.
\end{proof}
 In \cite[Section 2]{altalgebra} some general results on the structure of the algebra $K[S_n(H)]$ are proved (for arbitrary subgroups $H$
 of the symmetric group of degree $n$). In particular, if $z=a_1a_2\cdots a_n\in S_n(H)$ is central, then  $\mathcal{J}(K[S_n(H)]\subseteq K[zS_n(H)]$ \cite[Proposition 2.6]{altalgebra}.
 Using this in the present case, together with the fact that $zS_n$ is cancellative, we obtain the following result.
\begin{theorem}\label{Jrad}
Let $K$ be a field. Then $\mathcal{J}(K[S_n])= \{0 \}$.
\end{theorem}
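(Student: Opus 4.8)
The plan is to combine the two preceding results, namely the inclusion $\mathcal{J}(K[S_n])\subseteq K[zS_n]$ (from \cite[Proposition~2.6]{altalgebra}, valid because $z=a_1\cdots a_n$ is central in $S_n$) with the facts that $zS_n$ is cancellative and $G_n=zS_n\langle z\rangle^{-1}$ is a unique product group (Lemma~\ref{zS} and the preceding theorem). The strategy is to transfer the radical computation from $K[S_n]$ to the group algebra $K[G_n]$, where the unique product property forbids nontrivial behaviour.

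First I would use the inclusion to reduce everything to $K[zS_n]$: since $\mathcal{J}(K[S_n])\subseteq K[zS_n]$ and $\mathcal{J}(K[S_n])$ is an ideal of $K[S_n]$, it is in particular a subset of $K[zS_n]$ closed under the relevant multiplications. Because $zS_n$ is cancellative with group of fractions $G_n$, the semigroup algebra $K[zS_n]$ embeds into the group algebra $K[G_n]$, obtained by inverting the central element $z$. Concretely, every element of $K[zS_n]$ becomes, after multiplying by a suitable power $z^{-l}$, an element of $K[G_n]$, and this localization $K[G_n]=K[zS_n]\langle z\rangle^{-1}$ is a central localization, so it is flat and the embedding $K[zS_n]\hookrightarrow K[G_n]$ is injective.

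Next I would invoke the unique product property of $G_n$. A $K$-algebra that is a group algebra of a unique product group has no nonzero nilpotent elements and, more to the point here, the argument in the excerpt already records that group algebras of u.p.\ groups have only trivial units and no zero divisors; the key consequence I want is that $K[G_n]$ is semiprimitive, i.e.\ $\mathcal{J}(K[G_n])=0$. The plan is then to show that $\mathcal{J}(K[S_n])$, viewed inside $K[G_n]$ via the localization, must be a quasi-regular ideal of $K[zS_n]$, and since $z$ is central and invertible in $K[G_n]$, any element of $\mathcal{J}(K[S_n])\subseteq K[zS_n]$ that survives must lie in $\mathcal{J}(K[G_n])=0$. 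More precisely, I would argue that $z^{-l}\mathcal{J}(K[S_n])$ generates a quasi-regular (hence radical) subset of $K[G_n]$, forcing $\mathcal{J}(K[S_n])=0$.

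The main obstacle I anticipate is justifying that the Jacobson radical transfers correctly under the central localization: one must verify that an element of $\mathcal{J}(K[S_n])$, after inverting $z$, genuinely lands in $\mathcal{J}(K[G_n])$ rather than merely in some larger ideal, and that no information is lost when clearing denominators. This requires checking that quasi-regularity is preserved by the map $K[zS_n]\to K[G_n]$ and that the central element $z$ acts as a non-zero-divisor throughout, so that the injectivity of $K[zS_n]\hookrightarrow K[G_n]$ lets one pull the conclusion $\mathcal{J}(K[G_n])=0$ back to $K[S_n]$. The semiprimitivity of $K[G_n]$ itself rests on the u.p.\ property established earlier, so that input is available; the delicate part is purely the localization bookkeeping linking the three algebras $K[S_n]$, $K[zS_n]$, and $K[G_n]$.
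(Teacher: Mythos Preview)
Your proposal assembles the right ingredients---the inclusion $\mathcal{J}(K[S_n])\subseteq K[zS_n]$, the cancellativity of $zS_n$, and the unique product property of $G_n$---but the step you flag as ``the main obstacle'' is a genuine gap, not mere bookkeeping. Showing that the image of $\mathcal{J}(K[S_n])$ in $K[G_n]$ lies in $\mathcal{J}(K[G_n])$ requires proving that this image generates a quasi-regular \emph{ideal} of $K[G_n]$, and this is not automatic: elements of $K[G_n]\cdot\phi(\mathcal{J}(K[S_n]))$ have the form $z^{-m}\phi(\alpha')$ with $\alpha'\in\mathcal{J}(K[S_n])$, and there is no evident reason why $1+z^{-m}\phi(\alpha')$ should be a unit just because $1+\phi(\alpha')$ is. Jacobson radicals do not in general behave well under central localization (think of $\mathbb{Z}$ versus $\mathbb{Z}_{(p)}$), so this step needs a real argument that you have not supplied.

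The paper's proof sidesteps this entirely. Rather than invoking $\mathcal{J}(K[G_n])=0$, it uses the sharper consequence of the u.p.\ property: $K[G_n]$ has only \emph{trivial units}. Given $\alpha\in\mathcal{J}(K[S_n])\subseteq K[zS_n]$ with quasi-inverse $\beta\in K[zS_n]$, the equation $(1+\alpha)(1+\beta)=1$ holds in $K[G_n]$, so $1+\alpha=\lambda g$ for some $\lambda\in K^{\times}$ and $g\in G_n$. Since the support of $\alpha$ lies in $zS_n$, which injects into $G_n\setminus\{1\}$ (by degree), the element $1+\alpha$ has support containing $1$; comparing with $\lambda g$ forces $g=1$ and then $\alpha=0$. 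This one-line argument replaces your entire localization transfer, and it is what you should use.
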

\begin{proof}
We know that $G_n$ is a u.p. group. Hence $K[G_n]$ has only trivial
units (\cite{strojnowski} and
\cite[Lemma~13.1.9]{passman}).
Let
$\alpha \in \mathcal{J}(K[S_n])$.  By \cite[Proposition~2.6]{altalgebra},
$\alpha \in K[zS_n]$. There exists $\beta \in \mathcal{J}(K[S_n])$ such that
$$ \alpha + \beta + \alpha \beta = 0.$$
We also have that $\beta \in K[zS_n]$. Since
$$ (1+\alpha)(1+\beta)=1,$$
and $K[G_n]$ has only trivial units, there exist $\lambda_1,
\lambda_2 \in K$ and $w_1, w_2 \in zS_n\cup \{ 1\}$, such that
$$ 1 + \alpha = \lambda_1 w_1 \hspace{12 pt} \text{and} \hspace{12 pt} 1+\beta = \lambda_2 w_2$$
and $\lambda_1 \lambda_2 w_1 w_2 = 1$. Hence $w_1w_2=1$.
Because $S_n$ only has trivial units, we obtain that $w_1 =w_2 =1$
and
$$ \alpha = \lambda_1 w_1 - 1 = (\lambda_1 -1) \cdot 1.$$
Hence $\lambda_1 = 1$, and $\alpha = 0$.
\end{proof}

The proof of the following two results is as the proof of Lemma~2.5
and Proposition~2.6 of \cite{alghomshort}.

\begin{lemma}\label{prim}
$P=a_{1}a_{2}\cdots a_{n}S_{n}$ is a prime ideal of $S_n$. Moreover,
every prime ideal $Q$ of $K[S_{n}]$ such that $Q\cap S_{n}\neq
\emptyset $ contains $P$. Furthermore, $K[P]$ is a height one prime
ideal of $K[S_{n}]$.
\end{lemma}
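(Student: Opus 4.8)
The plan is to organize all three assertions around the single central element $z=a_1a_2\cdots a_n$ and the localization at it. I first record the ingredients I will use: $P=zS_n=S_nz$ is a two‑sided ideal because $z$ is central, and $K[P]=zK[S_n]$ is the \emph{principal} ideal generated by $z$; by the proof of Lemma~\ref{zS} every $s\in S_n$ admits $u\in S_n$ with $us=z^{\ell}$, while $zS_n$ is cancellative with group of fractions $G_n$; and inverting the central element $z$ localizes $S_n$ to $G_n$, so $K[S_n]\langle z\rangle^{-1}\cong K[G_n]$, which is a domain since $G_n$ is a u.p. group.

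The ``moreover'' statement is the easiest, so I would dispatch it first. Given a prime ideal $Q$ of $K[S_n]$ with $Q\cap S_n\neq\emptyset$, pick $s\in Q\cap S_n$ and $u\in S_n$ with $us=z^{\ell}$; then $z^{\ell}=us\in Q$. Since $z$ is central, $zK[S_n]z^{\ell-1}=z^{\ell}K[S_n]\subseteq Q$, so primeness gives $z\in Q$ or $z^{\ell-1}\in Q$, and an induction yields $z\in Q$. Hence $P=zS_n\subseteq Q$.

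For the primeness of $P$ I would prove that $K[P]$ is prime in the algebra and then \emph{contract}: since $S_n$ is a $K$-basis of $K[S_n]$ we have $K[P]\cap S_n=P$, and the intersection of a prime ideal of $K[S_n]$ with $S_n$ is a prime ideal of $S_n$; this gives that $P$ is a prime ideal of $S_n$ directly. To see that $K[P]=zK[S_n]$ is prime I would follow \cite[Prop.~2.6]{alghomshort}: given $\alpha,\beta\in K[S_n]$ with $\alpha K[S_n]\beta\subseteq zK[S_n]$ but $\supp(\alpha),\supp(\beta)\not\subseteq zS_n$, I would use the length‑lexicographical order together with the normal form of Theorem~\ref{normalform} to single out leading terms $w_\alpha,w_\beta\in S_n\setminus zS_n$, and then exploit the unique product property of $G_n$ to find $\gamma$ for which a distinguished monomial of $w_\alpha\gamma w_\beta$ survives all cancellation and still lies outside $zS_n$, contradicting $\alpha\gamma\beta\in zK[S_n]$. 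I expect this to be the main obstacle: reconciling the \emph{group} u.p. property of $G_n$ with \emph{membership in $zS_n$}, which is a condition on the semigroup normal form, so that the surviving monomial can be kept out of $zS_n$ is exactly where the delicate normal‑form bookkeeping enters.

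Finally, for height one I would argue that any prime $Q\subsetneq K[P]$ cannot contain $z$, for otherwise $K[P]=zK[S_n]\subseteq Q$; hence $Q$ survives the localization at the central element $z$ and corresponds to a prime of $K[S_n]\langle z\rangle^{-1}\cong K[G_n]$. Because $K[G_n]$ is a domain, the principal‑ideal structure of $(z)$ leaves no room for a chain $Q_0\subsetneq Q_1\subsetneq K[P]$ of primes, so $K[P]$ has height one, again as in \cite[Prop.~2.6]{alghomshort}. I expect the height‑one bound to be the most technical point, since it requires controlling all primes beneath $K[P]$ through the domain $K[G_n]$ and the fact that $z$ is regular modulo the $z$‑torsion ideal of $K[S_n]$.
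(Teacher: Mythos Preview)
Your treatment of the ``moreover'' assertion is correct and is exactly the argument of \cite{alghomshort}.

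The primeness argument, however, is misdirected. You propose to prove that $K[P]$ is prime first, via the unique product property of $G_n$, and then contract to $S_n$. But the u.p.\ property of $G_n$ is irrelevant to the quotient $K[S_n]/K[P]$: \emph{inverting} $z$ leads to $K[G_n]$, whereas \emph{killing} $z$ leads to a monomial algebra unrelated to $G_n$. Your sketch offers no mechanism by which a ``surviving monomial'' in $w_\alpha\gamma w_\beta$ would be forced to lie outside $zS_n$; the u.p.\ property controls cancellation among group elements, not membership in the semigroup ideal $P$. The approach of \cite[Lemma~2.5]{alghomshort}, which the paper follows, runs in the opposite direction. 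By the remark after Theorem~\ref{normalform}, elements of $S_n\setminus P$ have a unique word representative, and $\pi^{-1}(P)\subseteq\FM_n$ is precisely the ideal of words containing some $a_{\sigma(1)}\cdots a_{\sigma(n)}$ ($\sigma\in H$) as a factor. Primeness of $P$ in $S_n$ is then the elementary combinatorial statement that for words $u,v$ avoiding all such factors there exists $w$ (for instance a suitable power $a_j^n$ of a single generator) with $uwv$ still avoiding them. Once $P$ is prime in $S_n$, primeness of $K[P]$ follows because $K[S_n]/K[P]$ is a prime monomial algebra; this is exactly the observation recorded immediately after the lemma, via \cite[Proposition~24.2]{book-jan}.

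For height one your localization instinct is right, but ``the principal ideal structure of $(z)$ leaves no room for a chain'' is not an argument: in a non-commutative, possibly non-Noetherian ring a principal prime need not have height one, and $K[G_n]$ being a domain says nothing about chains of nonzero primes inside it. The argument in \cite[Proposition~2.6]{alghomshort} uses the ``moreover'' part more sharply: any prime $Q\subsetneq K[P]$ satisfies $Q\cap S_n=\emptyset$, and one identifies all such primes with a single minimal prime of $K[S_n]$, leaving no room for a chain of length two below $K[P]$.
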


Note that if $Q$ is a prime ideal of $S_{n}$, then $K[Q]$ is a prime
ideal of $K[S_{n}]$. Indeed, by the proof of Lemma~\ref{prim},
$P\subseteq Q$ and, hence, by
\cite[Proposition~24.2]{book-jan},
$K[S_n]/K[Q]$ is a prime monomial algebra.
In particular, since $n>2$, the submonoid $\langle a_1,a_2\rangle$ of $S_n/P$ is a free monoid of rank two. Therefore $K[S_n]/K[P]$ is not a PI-algebra.
Hence,  by a recent result of Okni\'nski
\cite[Theorem~0.1]{trichotomy},
and by a result of Bell and Colak
\cite[Theorem~1.2]{bell},
we obtain the following result.
\begin{proposition}\label{di-tri-chot}
Assume that $Q$ is a prime ideal of $S_n$. Then
$K[S_n]/K[Q]$ is either a prime PI-algebra\index{PI-algebra} or a (left and right)
primitive ring\index{primitive ring} or it has a nonzero locally nilpotent\index{locally nilpotent} Jacobson radical\index{Jacobson radical}.

Assume, furthermore, that $Q$ is finitely generated.
Then
$K[S_n]/K[Q]$ is either a prime PI-algebra or a (left and right)
primitive ring. In particular, $K[S_n]/K[P]$ is a (left and right)
primitive ring.
\end{proposition}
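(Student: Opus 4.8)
The plan is to obtain both assertions as direct applications of the two structural theorems cited just above the statement, once $K[S_n]/K[Q]$ has been recognised as a prime monomial algebra. The first step is to record the set-up already prepared in the paragraph preceding the proposition: since $Q$ is a prime ideal of $S_n$, Lemma~\ref{prim} yields $P\subseteq Q$, and by \cite[Proposition~24.2]{book-jan} the algebra $K[S_n]/K[Q]$ is a prime monomial algebra. In particular it is a prime, finitely generated, monomial $K$-algebra, which is exactly the class to which the trichotomy applies.

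For the first assertion I would simply invoke Okni\'nski's theorem \cite[Theorem~0.1]{trichotomy} for the prime monomial algebra $K[S_n]/K[Q]$: it gives at once that such an algebra is either a prime PI-algebra, or a (left and right) primitive ring, or has a nonzero locally nilpotent Jacobson radical. No further computation is needed here, since the hypotheses have been verified in the previous step.

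For the second assertion I would add the finiteness hypothesis. When $Q$ is finitely generated as an ideal of $S_n$, the algebra $K[S_n]/K[Q]$ is finitely presented: $S_n$ is itself finitely presented, its defining relations being only the finitely many ones indexed by $1\leq i\leq n$, and passing to the Rees factor by a finite generating set of $Q$ adds only finitely many monomial relations. For such a finitely presented prime monomial algebra the theorem of Bell and Colak \cite[Theorem~1.2]{bell} excludes the third alternative of the trichotomy, leaving only the PI and primitive cases. For the final clause I would observe that $P=a_1a_2\cdots a_nS_n=zS_n$ is generated, as an ideal of $S_n$, by the single central element $z$, hence is finitely generated, and is prime by Lemma~\ref{prim}; since it has already been noted that $\langle a_1,a_2\rangle$ maps onto a free monoid of rank two in $S_n/P$, the algebra $K[S_n]/K[P]$ is not PI, and so the dichotomy forces it to be primitive.

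I expect the only real difficulty to lie in matching the hypotheses of the two imported theorems to the situation at hand: confirming that finite generation of $Q$ as a semigroup ideal really delivers the finite presentability required by \cite[Theorem~1.2]{bell}, and that the non-PI property, which rests on the free rank-two submonoid inside $S_n/P$, is precisely what selects the primitive branch over the PI branch. All the genuinely semigroup-theoretic input, namely primeness of $K[S_n]/K[Q]$ and its monomial structure, is already supplied by Lemma~\ref{prim} and the remark preceding the statement, so what remains is bookkeeping with the cited results rather than new combinatorial work.
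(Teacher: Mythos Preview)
Your proposal is correct and follows essentially the same route as the paper: the argument is entirely contained in the paragraph preceding the proposition, which, just as you do, uses Lemma~\ref{prim} and \cite[Proposition~24.2]{book-jan} to identify $K[S_n]/K[Q]$ as a prime monomial algebra, then invokes Okni\'nski's trichotomy for part one, Bell--Colak for part two, and the free submonoid $\langle a_1,a_2\rangle\subseteq S_n/P$ to rule out the PI case for $Q=P$. Your elaborations on why finite generation of $Q$ yields a finitely presented quotient and why $P=zS_n$ is principal are helpful additions that the paper leaves implicit.
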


\vspace{30pt}
 \noindent \begin{tabular}{llllllll}
 F. Ced\'o && E. Jespers  \\
 Departament de Matem\`atiques &&  Department of Mathematics \\
 Universitat Aut\`onoma de Barcelona &&  Vrije Universiteit Brussel  \\
08193 Bellaterra (Barcelona), Spain    && Pleinlaan
2, 1050 Brussel, Belgium \\
 cedo@mat.uab.cat && efjesper@vub.ac.be\\
   &&   \\
G. Klein &&  \\ Department of Mathematics &&
\\  Vrije Universiteit Brussel && \\
Pleinlaan 2, 1050 Brussel, Belgium &&\\ gklein@vub.ac.be&&
\end{tabular}
\end{document}